\numberwithin{equation}{section}
\newtheorem{definition}{Definition}[section]
\newtheorem{theorem}[definition]{Theorem}
\newtheorem{lemma}[definition]{Lemma}
\newtheorem{remark}[definition]{Remark}
\newtheorem{proposition}[definition]{Proposition}
\def\div{\mathrm{div}}
\def\C{\mathrm{C}}
\def\R{\mathbb{R}}
\def\N{\mathbb{N}}
\def\RCD{\mathrm{RCD}}
\def\CD{\mathrm{CD}}
\def\d{\mathrm{d}}
\def\sfd{\mathsf{d}}
\def\m{\mathfrak{m}}
\def\mm{\mathfrak{m}}
\def\BV{\mathrm{BV}}
\def\tr{\mathrm{tr}}
\def\Per{\mathrm{Per}}
\def\dPer{\d\Per}
\def \weakto{\rightharpoonup}
\def\Xint#1{\mathchoice
   {\XXint\displaystyle\textstyle{#1}}%
   {\XXint\textstyle\scriptstyle{#1}}%
   {\XXint\scriptstyle\scriptscriptstyle{#1}}%
   {\XXint\scriptscriptstyle\scriptscriptstyle{#1}}%
   \!\int}
\def\XXint#1#2#3{{\setbox0=\hbox{$#1{#2#3}{\int}$}
     \vcenter{\hbox{$#2#3$}}\kern-.5\wd0}}
\def\dashint{\Xint-}
\title{Monotonicity formula and stratification of the singular set of perimeter minimizers in RCD spaces}
\author{Francesco Fiorani, Andrea Mondino and Daniele Semola}
\date{}
\begin{document}

\maketitle

\begin{abstract}
  The goal of this paper is to establish a monotonicity formula for perimeter minimizing sets in $\RCD(0,N)$ metric measure cones, together with the associated rigidity statement.\\ The applications include sharp Hausdorff dimension estimates for the singular strata of perimeter minimizing sets in non-collapsed $\RCD$ spaces and the existence of blow-down cones for global perimeter minimizers in Riemannian manifolds with nonnegative Ricci curvature and Euclidean volume growth.
\end{abstract}

\tableofcontents

\section{Introduction}

  The main goal of this paper is to prove a monotonicity formula for perimeter minimizing sets in $\RCD(0,N)$ metric measure cones, together with the associated rigidity statement.\\ Among the applications, we establish sharp Hausdorff dimension estimates for the singular strata of perimeter minimizing sets in non-collapsed $\RCD$ spaces, and the existence of blow-down cones for global perimeter minimizers in Riemannian manifolds with nonnegative Ricci curvature and Euclidean volume growth. 
  
  Below we briefly introduce the setting and then discuss more in detail the main results and their relevance.
  \medskip

The class of $\mathrm{RCD}(K,N)$ spaces consists of infinitesimally Hilbertian metric measure spaces spaces with synthetic Ricci curvature lower bounds and dimension upper bounds. More specifically, $N \in [1,\infty)$ represents a synthetic upper bound on the dimension and $K\in \R$ represents a synthetic lower bound on the Ricci curvature. This class includes finite dimensional Alexandrov spaces with curvature bounded from below and (possibly pointed) measured Gromov Hausdorff limits of smooth Riemannian manifolds with Ricci curvature lower bounds and dimension upper bounds, the so-called Ricci limit spaces. Many of the results in this paper are new also in these settings, to the best of our knowledge. We address the reader to Section \ref{sec:preliminaries} and to the references therein indicated for the relevant background on $\RCD$ spaces.
\smallskip

Sets of finite perimeter have been a very important tool in the developments of Geometric Measure Theory in Euclidean and Riemannian contexts in the last seventy years. In \cite{1BakryEmeryAmbrosio,SemolaGaussGreen,SemolaCutAndPaste}, and the more recent \cite{BrenaGigli,rankOneTheorem}, most of the classical Euclidean theory of sets of finite perimeter has been generalized to $\RCD(K,N)$ metric measure spaces. Moreover in \cite{WeakLaplacian} the second and the third author started a study of locally perimeter minimizing sets in the same setting (see also \cite{GigliMondinoSemola23}). Due to the compactness of the class of $\RCD(K,N)$ spaces with respect to the (pointed) measured Gromov-Hausdorff topology, these developments have been important to address some questions of Geometric Measure Theory on smooth Riemannian manifolds, e.g.\;see  \cite{AntonelliBrueFogagnoloPozzetta,AntonelliPasqualettoPozzettaSemola1}.

\subsection{Monotonicity Formula}

The first main result of this work is a monotonicity formula for perimeter minimizers in cones over $\mathrm{RCD}(N-2,N-1)$ spaces, with the associated conical rigidity statement. We recall that, by \cite{KettererCones}, the metric measure cone over a metric measure space $(X,\sfd,\m)$ is an $\RCD(0,N)$ metric measure space if and only if $(X,\sfd,\m)$ is an $\RCD(N-2,N-1)$ metric measure space.

For the sake of clarity, we introduce below the relevant notion of perimeter minimizing set in an $\RCD(K,N)$ space.

\begin{definition}[Local and Global Perimeter Minimizer]
Let $(X,\sfd,\m)$ be an $\RCD(K,N)$ space. A set of locally finite perimeter $E \subset X$ is a
\begin{itemize}
\item  \emph{Global perimeter minimizer} if  it minimizes the perimeter for every compactly supported perturbation, i.e.
$$
     \Per(E; B_R(x)) \leq \Per(F; B_R(x))
$$
for all $x \in X$, $R>0$ and $F \subset X$ with $F=E$ outside $B_R(x)$;
    \item  \emph{Local perimeter minimizer} if  for every $x\in X$ there exists $r_x>0$ such that $E$ minimizes the perimeter in $B_{r_x}(x)$, i.e. for all $F \subset X$ with $F=E$ outside $B_{r_x}(x)$ it holds
$$
     \Per(E; B_{r_x}(x)) \leq \Per(F; B_{r_x}(x))\, .
$$
\end{itemize}
\end{definition}

Our main result is the following:

\begin{theorem}[Monotonicity Formula]\label{monotonicity formula thm}

Let $N\ge 2$ and let $(X,\sfd,\m)$ be an $\mathrm{RCD}(N-2,N-1)$ space (with ${\rm diam}(X)\le \pi$, if $N=2$). Let $C(X)$ be the metric measure cone over $(X,\sfd,\m)$ and let $O$ denote its tip. Let $E \subset C(X)$ be a global perimeter minimizer. Then the function $\Phi : (0,\infty) \to \R$ defined by
\begin{align}\label{eq:defPhi}
    \Phi(r) := \frac{\Per(E;B_r(O))}{r^{N - 1}}\, ,
\end{align}
is non-decreasing. Moreover, if there exist $0<r_1<r_2<\infty$ such that $\Phi(r_1)=\Phi(r_2)$, then $E\cap \big(B_{r_2}(O)\setminus \overline{B_{r_1}(O)} \big)$ is a conical annulus, in the sense that there exists $A\subset X$ such that 
$$
E\cap \big(B_{r_2}(O)\setminus \overline{B_{r_1}(O)} \big)= C(A) \cap \big(B_{r_2}(O)\setminus \overline{B_{r_1}(O)} \big)\, ,
$$
where $C(A)=\{(t,x)\in C(X)\colon x\in A\}$ is the cone over $A\subset X$.
In particular, if  $\Phi$ is constant on $(0,\infty)$, then $E$ is a cone (in the sense that there exists $A\subset X$ such that $E=C(A)$).
\end{theorem}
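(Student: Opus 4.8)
\noindent\emph{Plan of proof.}
The plan is to transpose the classical De Giorgi monotonicity formula for perimeter minimizers in $\R^N$ to the metric measure cone, replacing the Euclidean comparison with cones by a comparison that exploits the warped-product structure of $C(X)$; recall that by \cite{KettererCones} $C(X)$ is an $\RCD(0,N)$ space, so the BV and perimeter calculus of \cite{1BakryEmeryAmbrosio,SemolaGaussGreen} is at our disposal. Throughout, write $\r:=\sfd(\cdot,O)$ and $m(r):=\Per(E;B_r(O))$, so that $\Phi(r)=m(r)/r^{N-1}$. The first ingredient I would establish is the cone analogue of the Euclidean identity $\Per(\text{cone over }\Sigma;B_r)=\tfrac{r}{N-1}\mathcal H^{N-2}(\Sigma)$, namely
\[
\Per\big(C(A);B_r(O)\big)=\frac{r^{N-1}}{N-1}\,\Per_X(A)\qquad\text{for every finite-perimeter }A\subset X.
\]
This is a direct computation in the metric measure cone $\big((0,\infty)\times X,\ \d t^2+t^2\,\sfd_X^2,\ t^{N-1}\,\d t\otimes\m\big)$: one specialises the warped-product weak-upper-gradient identity $|\nabla_{C(X)}u|^2=|\partial_t u|^2+t^{-2}|\nabla_X u|^2$ to the $t$-independent function $u=\chi_{C(A)}$, integrates $t^{-1}|\nabla_X\chi_A|$ against $t^{N-1}\,\d t\otimes\m$ over $B_r(O)$, and relaxes.

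Next I would combine a slicing estimate with a cone competitor. Using the same warped-product identity together with the coarea/slicing theory for BV functions on $\RCD$ spaces, one shows that for a.e.\ $r$ the slice $E_r:=\{x\in X:(r,x)\in E\}$ has finite perimeter in $X$, that $\Per(E;\partial B_r(O))=0$, and that $\int_0^R r^{N-2}\,\Per_X(E_r)\,\d r\le \Per(E;B_R(O))$, the defect encoding the radial component of the total variation of $\chi_E$. On the other hand, for a.e.\ $r$ the set $\tilde E_r:=(E\setminus B_r(O))\cup\{(t,x):0<t<r,\ x\in E_r\}$ agrees with $E$ outside $B_r(O)$, has finite perimeter, and by the previous identity satisfies $\Per(\tilde E_r;B_r(O))=\tfrac{r^{N-1}}{N-1}\Per_X(E_r)$, with no extra contribution on $\partial B_r(O)$ since the inner trace of $\tilde E_r$ there is again $E_r$. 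Since $E$ is a \emph{global} minimizer, testing minimality on $B_r(O)$ gives $m(r)\le\tfrac{r^{N-1}}{N-1}\Per_X(E_r)$ for a.e.\ $r$.

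Putting the two estimates together with the elementary fact that the a.e.\ derivative of the monotone function $m$ dominates the Lebesgue density of $\r_{\#}\Per(E;\cdot)$, one obtains $r^{N-2}\Per_X(E_r)\le m'(r)$, hence $m(r)\le\tfrac{r}{N-1}m'(r)$ and $\big(m(r)/r^{N-1}\big)'\ge0$ for a.e.\ $r$. To upgrade this to monotonicity of $\Phi$ one must exclude a singular part of $m$; I would get this from the interior density (Ahlfors regularity) estimates for perimeter minimizers in \cite{WeakLaplacian}, which yield continuity, in fact local Lipschitzianity, of $m$, or else bypass differentiation by a direct Gr\"onwall argument on the integral inequalities above.

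For the rigidity: if $\Phi(r_1)=\Phi(r_2)$ then $\Phi\equiv\Phi(r_1)$ on $[r_1,r_2]$ by monotonicity, so all the inequalities above become equalities for a.e.\ $r\in(r_1,r_2)$; in particular the slicing inequality is an equality on $B_{r_2}(O)\setminus\overline{B_{r_1}(O)}$, which forces the radial part of the total variation of $\chi_E$ to vanish there, i.e.\ $\chi_E(t,x)$ is independent of $t$ on the annulus. Hence $E\cap\big(B_{r_2}(O)\setminus\overline{B_{r_1}(O)}\big)=C(A)\cap\big(B_{r_2}(O)\setminus\overline{B_{r_1}(O)}\big)$ with $A:=E_{r_0}$ for any fixed $r_0\in(r_1,r_2)$ (all such slices coinciding up to $\m$-null sets), and the case "$\Phi$ constant on $(0,\infty)$" follows by letting $r_1\downarrow0$ and $r_2\uparrow\infty$ and matching the slices on overlapping annuli, the tip being $\m$-null. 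I expect the main obstacle to be not this formal computation but the rigorous BV calculus on the cone: justifying the warped-product weak-upper-gradient identity for characteristic functions, the precise slicing statement and the identification of the radial part of $|D\chi_E|$, the fact that its vanishing characterises $t$-independent sets, and the admissibility (finite perimeter, correct trace) of the cone competitor; the absence of a singular part of $m$ in the third step is a secondary but necessary point.
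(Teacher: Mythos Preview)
Your plan follows the same classical cone--comparison strategy as the paper (compare $E$ with the cone over its spherical slice, bound the slice perimeter by $m'(r)$, and derive $(m/r^{N-1})'\ge 0$), but the technical route is different. The paper does not work directly with $\chi_E$: it approximates by regular $f_k\in\mathrm{LIP}\cap D_{\mathrm{loc}}(\Delta)$, carries out the comparison at the smooth level (the radial extension $h(t,x):=f_k^{(r)}(x)$ plays the role of your cone competitor), obtains the differential inequality for $\Phi_{f_k}$ with explicit error terms $a_k(r),b_k(r)$, and only then passes to the limit. This buys exactly what you flag as the main obstacle: the warped--product gradient identity $|\nabla f|^2=|\partial_t f|^2+t^{-2}|\nabla_X f|^2$ is available for $W^{1,2}$ functions from \cite{Gigli2018SobolevSO}, so the slicing and tangential/radial splitting are rigorous at the approximate level, and no BV slicing theory on cones, no trace identification for the cone competitor, and no absolute continuity of $m$ are needed. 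Your direct BV route is conceptually cleaner but would require establishing a BV warped--product decomposition and the admissibility of the cone competitor $\tilde E_r$ from scratch.

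For the rigidity you assert that equality in the slicing forces the radial part of $|D\chi_E|$ to vanish and that this implies $\chi_E$ is $t$--independent. The first implication is delicate in your setup because the relevant ``radial defect'' term lives at the approximate level; the paper recovers it in the limit via a joint lower--semicontinuity lemma for sequences $g_k\mu_k\rightharpoonup g\mu$ to identify $\nu_E\cdot\nabla\sfd_C(O,\cdot)=0$ $\Per(E)$--a.e. The second implication (radial component of the unit normal vanishes $\Longrightarrow$ conical) is the content of the paper's separate characterisation lemma, proved by a Gauss--Green computation showing $s\mapsto s^{-N}\m_C(E\cap C(B_r^X(x))\cap B_s)$ is constant; this step is not as immediate as ``$\partial_t\chi_E=0$'' and deserves its own argument.
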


The above monotonicity formula with rigidity generalizes the analogous, celebrated result in the Euclidean setting, see for instance \cite{Federerbook,Morganbook,maggi_2012}. On smooth Riemannian manifolds, it is well known that an almost monotonicity formula holds, with error terms depending on two sided bounds on the Riemann curvature tensor and on lower bounds on the injectivity radius. For cones over smooth cross sections, the monotonicity formula is a folklore result, see for instance \cite{Andersonmini,DingJostXin}. Some special cases of Theorem \ref{monotonicity formula thm} have been discussed recently in \cite{Dingarea1,WeakLaplacian}. We also mention that in \cite{FreeboundaryRCD} an analogous monotonicity formula in $\RCD$ metric measure cones has been obtained for solutions of free boundary problems, generalizing a well known Euclidean result.\\
In the proof, we adapt one of the classical strategies in the Euclidean setting. The implementation is of course technically more demanding, in particular for the rigidity part, due to the low regularity of the present context.
\medskip

The relevance of Theorem \ref{monotonicity formula thm} for the applications, that we are going to discuss below, comes from the fact that tangent cones of non-collapsed $\RCD(K,N)$ metric measure spaces $(X,\sfd,\mathcal{H}^N)$ and blow-downs of $\RCD(0,N)$ spaces $(X,\sfd,\mathcal{H}^N)$ with Euclidean volume growth are metric measure cones, see \cite{ConeMetric,GigliNonCollapsed,KettererCones} for the present setting and the earlier \cite{CheegerColding96,CheegerColdingStructure1,burago2001course} for previous results in the case of Ricci limit spaces and Alexandrov spaces.
\medskip

It is an open question whether an almost monotonicity formula holds for perimeter minimizers in general $\RCD(K,N)$ spaces, possibly under the non-collapsing assumption. In particular, we record the following:
\smallskip

\textbf{Open question:} let $(X,\sfd,\mathcal{H}^N)$ be an $\RCD(K,N)$ space and let $E\subset X$ be a local perimeter minimizing set. Is it true that the limit 
\begin{equation}
    \lim_{r\to 0}\frac{\Per(E; B_r(x))}{r^{N-1}}
\end{equation}
exists for all $x\in\partial E$?

\subsection{Stratification of the singular set and other applications}

It is well known that monotonicity formulas are an extremely powerful tool in the analysis of singularities of several problems in Geometric Analysis. We just mention here, for the sake of illustration and because of the connection with the developments of the present work:
\begin{itemize}
\item the Hausdorff dimension estimates for the singular strata of area minimizing currents in codimension one, originally obtained in \cite{Federerstrata};
\item the Hausdorff dimension estimates for the singular strata of $\RCD(K,N)$ spaces $(X,\sfd,\mathcal{H}^N)$, obtained in \cite{GigliNonCollapsed} and earlier in \cite{CheegerColdingStructure1} in the case of non-collapsed Ricci limit spaces.
\end{itemize}
The proofs of the aforementioned results are based on the so-called dimension reduction technique, which relies in turn on the validity of a monotonicity formula with associated conical rigidity statement.\\
In the present work, building on the top of Theorem \ref{monotonicity formula thm} we establish analogous Hausdorff dimension estimates for the singular strata of perimeter minimizing sets in $\RCD(K,N)$ spaces $(X,\sfd,\mathcal{H}^N)$. Below we introduce the relevant terminology and state our main results.
\medskip

\begin{definition}[Singular Strata]\label{definition | singular stratum of a finite perimeter set intro}
Let $(X,\sfd,\mathcal{H}^N)$ be an $\mathrm{RCD}(K,N)$ space, $E\subset X$ a locally perimeter minimizing set and $0 \leq k \leq N-3$ an integer. The $k$-singular stratum of $E$,  $\mathcal{S}^E_k$, is defined as
\begin{equation*}
    \label{singular stratum definition}
    \begin{aligned}
    \mathcal{S}_k^E := &\{x \in \partial E \colon \mbox{no tangent space to $(X,\sfd,\mathcal{H}^N,x,E)$is of the form }(Y,\rho, \mathcal{H}^N, y, F), \\
    &\; \mbox{ with }(Y,\rho,y) \mbox{ isometric to }(Z\times\R^{k+1},\sfd_Z \times \sfd_\mathrm{eucl},(z,0)) \mbox{ for some pointed }(Z,\sfd_Z,z)\\
    &\; \mbox{ and }F=G\times\R^{k+1} \mbox{ with } G\subset Z \mbox{ global perimeter minimizer}\}.
    \end{aligned}
\end{equation*}
\end{definition}

The above definition would make sense also in the cases when $k\ge N-2$. However, it seems more appropriate not to adopt the terminology \emph{singular strata} in those instances.

\begin{definition}[Interior and Boundary Regularity Points]
 Let $(X,\sfd,\mathcal{H}^N)$ be an $\mathrm{RCD}(K,N)$ space and let $E\subset X$ be a locally perimeter minimizing set. Given $x\in \partial E$, we say that $x$ is an \emph{interior regularity} point if
 \begin{equation}
     \mathrm{Tan}_x(X,\sfd,\mathcal{H}^N,E)=\{(\R^N,\sfd_{\mathrm{eucl}},\mathcal{H}^N, 0,  \R^N_+)\}\, .
 \end{equation}
 The set of interior regularity points of $E$ will be denoted by $\mathcal{R}^E$.\\
Given $x\in \partial E$, we say that $x$ is a \emph{boundary regularity} point if
  \begin{equation}
     \mathrm{Tan}_x(X,\sfd,\mathcal{H}^N,E)=\{(\R^N_+,\sfd_{\mathrm{eucl}},\mathcal{H}^N, 0, \{x_1\ge 0\})\}\, ,
 \end{equation}
 where $x_1$ is one of the coordinates of the $\R^{N-1}$ factor in $\R^{N}_+=\R^{N-1}\times \{x_N\ge 0\}$. The set of boundary regularity points of $E$ will be denoted by $\mathcal{R}^E_{\partial X}$.
\end{definition}

It was proved in \cite{WeakLaplacian} that the interior regular set $\mathcal{R}^E$ is topologically regular, in the sense that it is contained in a H\"older open manifold of dimension $N-1$.  By a blow-up argument it is not hard to show that $ \mathrm{dim}_\mathcal{H}\mathcal{R}^E_{\partial X}\leq N-2$  (see Proposition \ref{prop:RECod2}). Our main results about the stratification of the singular set for perimeter minimizers are that the complement of $\mathcal{S}_{N-3}^E$ in $\partial E$ consists of either interior or boundary regularity points, and that the classical Hausdorff dimension estimate $\dim(\mathcal{S}^E_k)\le k$ holds for any $0\le k\le N-3$.

\begin{theorem}\label{thmn: compl n-3 intro}
Let $(X,\sfd,\mathcal{H}^N)$ be an $\mathrm{RCD}(K,N)$ space and let $E\subset X$ be a locally perimeter minimizing set. Then 
\begin{equation}
    \partial E\setminus \mathcal{S}_{N-3}^E=\mathcal{R}^E\cup \mathcal{R}^E_{\partial X}\, .
\end{equation}
\end{theorem}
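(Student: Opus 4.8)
The plan is to argue by a dimension-reduction/blow-up analysis, showing that at any point $x\in\partial E$ which is \emph{not} in $\mathcal{S}^E_{N-3}$, every tangent space necessarily splits off enough Euclidean factors that the cross-section is one- or two-dimensional, and then to classify the finitely many possibilities for perimeter minimizers in those low-dimensional model spaces. So, fix $x\in\partial E\setminus\mathcal{S}^E_{N-3}$. By definition of the $(N-3)$-singular stratum, there is a tangent space to $(X,\sfd,\mathcal{H}^N,x,E)$ of the form $(Z\times\R^{N-2},\sfd_Z\times\sfd_{\mathrm{eucl}},\mathcal{H}^N,(z,0),G\times\R^{N-2})$ with $G\subset Z$ a global perimeter minimizer. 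Here $(Z,\sfd_Z,\mathcal{H}^N_Z,z)$ is an $\RCD(0,N)$ space (tangent spaces are $\RCD(0,N)$, and they are non-collapsed by the volume-convergence theory for non-collapsed limits), and splitting off $\R^{N-2}$ forces $Z$ to be at most $2$-dimensional in the essential/analytic sense: by the splitting theorem and the structure of non-collapsed $\RCD$ spaces, $Z$ is either a point, $\R$, $\R_+=[0,\infty)$, or a $2$-dimensional $\RCD(0,2)$ space (a $2$-dimensional Alexandrov-type space, possibly with boundary).

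Next I would enumerate the cases for $Z$ and the minimizer $G\subset Z$. If $\dim Z=0$ then the tangent to $E$ is $\R^{N-2}$ (or $\emptyset$) inside $\R^{N-2}$, which contradicts $x\in\partial E$ (the boundary would be empty). If $Z=\R$, then $G\subset\R$ is a global perimeter minimizer, hence a half-line, and the tangent is $\R^N_+:=\{x_N\ge 0\}$ with $E$-part a half-space $\{x_1\ge 0\}\cap\R^N_+$ — wait, more carefully: $Z=\R$ gives tangent $\R^{N-1}$ with $G\times\R^{N-2}$ a half-space in $\R^{N-1}$, which cannot happen since the ambient non-collapsed tangent must be $N$-dimensional; the correct reading is that $\R\times\R^{N-2}=\R^{N-1}$ has Hausdorff dimension $N-1\neq N$, so this case is excluded as well unless we also split an extra direction. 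The genuinely occurring cases are $Z=\R^2$ with $G$ a half-plane, giving the interior model $(\R^N,\sfd_{\mathrm{eucl}},\mathcal{H}^N,0,\R^N_+)$; and $Z=\R\times\R_+=\{x_N\ge 0\}\subset\R^2$ with $G=\{x_1\ge 0\}\cap Z$, giving the boundary model $(\R^N_+,\sfd_{\mathrm{eucl}},\mathcal{H}^N,0,\{x_1\ge 0\})$. One must rule out $Z$ being a genuine $2$-dimensional cone with nontrivial cone angle $<2\pi$ or a half-plane-like cone, and rule out non-flat $G\subset\R^2$: a global perimeter minimizer in $\R^2$ is a half-plane (classical, and available in the $\RCD$ setting from the monotonicity formula Theorem~\ref{monotonicity formula thm} plus the classification of $1$-dimensional cross-sections), and a $2$-dimensional non-collapsed $\RCD(0,2)$ cone supporting any perimeter minimizer through its tip must be flat by the rigidity in Theorem~\ref{monotonicity formula thm} applied with cross-section an interval or circle of the appropriate length.

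Once it is known that \emph{some} tangent at $x$ is either $(\R^N,\R^N_+)$ or $(\R^N_+,\{x_1\ge 0\})$, the final step is an \emph{upgrade from one tangent to all tangents}, i.e.\ showing these tangents are in fact unique, so that $x\in\mathcal{R}^E$ or $x\in\mathcal{R}^E_{\partial X}$. For the interior model this is exactly the $\epsilon$-regularity statement behind the result of \cite{WeakLaplacian} that $\mathcal{R}^E$ is contained in a Hölder manifold: if one tangent is Euclidean with half-space $E$-part, then by the quantitative/open nature of this condition (closeness to the model is preserved under blow-up and the model is isolated among minimizing cones) all tangents coincide with it. For the boundary model one argues similarly, using that the tangent $(\R^N_+,\{x_1\ge 0\})$ is isolated among the possible $2$-dimensional-cross-section models and is an open condition at boundary points; alternatively, one observes $x$ lies in the boundary $\partial X$ of the non-collapsed space $X$ (in the sense of \cite{GigliNonCollapsed}, detected by $\R_+$ splitting off the tangent), $\partial X$ near such $x$ is itself regular of dimension $N-1$, and $E$ restricted there is interior-regular inside $\partial X$.

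The main obstacle I anticipate is precisely this last upgrade, namely propagating a single good tangent to uniqueness of tangents (and hence membership in $\mathcal{R}^E\cup\mathcal{R}^E_{\partial X}$): unlike the Euclidean case, one cannot invoke Allard-type regularity directly, and one must instead combine the monotonicity formula of Theorem~\ref{monotonicity formula thm} (to get that the density $\Phi$ is well-defined and that every tangent is a cone), a classification of all minimizing cones over $1$-dimensional cross-sections, and the $\epsilon$-regularity theory for perimeter minimizers in $\RCD$ spaces from \cite{WeakLaplacian}, taking care of the boundary case where the cross-section is a $2$-dimensional space with boundary. A secondary technical point is making rigorous the claim that splitting off $\R^{N-2}$ from a non-collapsed $\RCD(0,N)$ space leaves an at-most-$2$-dimensional factor, which follows from the dimension theory of non-collapsed $\RCD$ spaces (the essential dimension equals $N$, and splitting is additive on dimensions), together with ruling out that the remaining $2$-dimensional factor or the minimizer inside it is non-flat, for which the rigidity part of Theorem~\ref{monotonicity formula thm} is exactly the tool.
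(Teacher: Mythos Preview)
Your overall strategy matches the paper's: split off $\R^{N-2}$, classify the two-dimensional cross-section $(Z,G)$, then upgrade from ``one good tangent'' to ``unique tangent'' via $\epsilon$-regularity. Two points deserve correction.

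\textbf{The 2D classification.} You write that ``a $2$-dimensional non-collapsed $\RCD(0,2)$ cone supporting any perimeter minimizer through its tip must be flat by the rigidity in Theorem~\ref{monotonicity formula thm}''. This overstates what the rigidity gives. The rigidity in Theorem~\ref{monotonicity formula thm} says only that if $\Phi$ is constant then the \emph{minimizer} $G$ is a cone over some $A\subset S^1(r)$ (or $A\subset[0,l]$); it says nothing about the ambient cone $Z$ being flat. The paper's proof (Lemma~\ref{lemma: 2d}) first uses the blow-down Theorem~\ref{thm:blowdown} (hence the monotonicity formula) to reduce to $G=C(A)$ conical, and then rules out $r<1$ (resp.\ $l<\pi$) by an explicit competitor: replace $\partial G\cap B_1$, which consists of two radial segments of total length $2$, by the geodesic chord joining the two points of $\partial G\cap\partial B_1$ (resp.\ the geodesic from $\partial G\cap\partial B_1$ to $\partial Z$). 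This chord has length $\sqrt{2(1-\cos(\sfd\wedge\pi))}\le 2$, with equality forcing $r=1$, $\theta=1/2$ (resp.\ $l=\pi$, $l'=\pi/2$). So the monotonicity formula is indeed a key input, but you are missing the chord comparison that actually forces flatness of $Z$.

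\textbf{Boundary uniqueness.} You correctly flag the upgrade to uniqueness as the main obstacle, and your interior argument via \cite{WeakLaplacian} is exactly what the paper does. For the boundary case your proposal is heuristic; the paper is more specific: uniqueness of the \emph{ambient} tangent cone at a point where some tangent is $\R^N_+$ is taken from \cite{BrueNaberSemola}, and then uniqueness of the tangent to $E$ follows by repeating the $\epsilon$-regularity argument of \cite{WeakLaplacian} with the classical \emph{boundary} regularity theory for perimeter minimizers (Gr\"uter \cite{Gruter}) in place of the interior theory. Your alternative route through regularity of $\partial X$ is plausible but would need the same ingredients to be made rigorous.
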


\begin{theorem}[Stratification of the singular set]\label{thm: strat intro}
Let $(X,\sfd,\mathcal{H}^N)$ be an $\mathrm{RCD}(K,N)$ space and $E\subset X$ a locally perimeter minimizing set. Then, for any $0\le k\le N-3$ it holds
\begin{equation}
    \mathrm{dim}_\mathcal{H} \mathcal{S}_k^E \leq k\, .
\end{equation}
\end{theorem}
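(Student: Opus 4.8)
The plan is to follow the classical \emph{dimension reduction} scheme of Federer, adapted to the $\RCD$ setting, using Theorem \ref{monotonicity formula thm} as the rigidity input. First I would set up the necessary compactness and structure framework: locally perimeter minimizing sets in $\RCD(K,N)$ spaces $(X,\sfd,\mathcal{H}^N)$ are stable under pointed measured Gromov--Hausdorff convergence (together with the convergence of the sets in $L^1_{\mathrm{loc}}$ and of the perimeter measures), and every tangent to a non-collapsed $\RCD(K,N)$ space is an $\RCD(0,N)$ metric measure cone; moreover, by \cite{WeakLaplacian} and the relative volume comparison, one has at every $x\in\partial E$ the existence of the density-type quantity governing the monotone function $\Phi$ along blow-ups. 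The key point is that tangents $(Y,\rho,\mathcal{H}^N,y,F)$ at a point of $\partial E$ are metric measure cones $C(Z)$ with $F=C(G)$ a \emph{cone} perimeter minimizer, by the rigidity statement of Theorem \ref{monotonicity formula thm} (the monotone quantity is automatically constant along the blow-up sequence). This is exactly the conical structure needed to run the stratification.

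Next I would define, for $x\in\partial E$, the infinitesimal Euclidean symmetry: $x$ has \emph{symmetry} $\ge j$ if some tangent splits off an $\R^j$ factor isometrically \emph{together with} the corresponding splitting of the set $F=G\times\R^j$, $G$ a global perimeter minimizer. By Definition \ref{definition | singular stratum of a finite perimeter set intro}, $\mathcal{S}^E_k$ is precisely the set of points whose symmetry is $\le k$. The two standard lemmas to establish are: (i) \emph{cone splitting} — if a tangent cone $C(Z)$ at $x$ (with $C(Z)=C(W)\times\R^j$ and minimizer $C(G)=C(G')\times\R^j$) itself has, at some point $y$ off the spine, a tangent that splits off an extra line, then in fact $C(Z)$ splits off $\R^{j+1}$; this uses that a cone which is translation invariant in a direction not through the tip is a product, a fact that in the $\RCD$ setting follows from the splitting theorem together with the cone rigidity; and (ii) \emph{upper semicontinuity / almost-monotonicity of the density}, which here is clean because along any blow-up $\Phi$ is constant, so the density $\theta(x):=\lim_{r\to0}\Phi_x(r)$ is well defined and upper semicontinuous in $x$ under the convergence above. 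Then the classical Almgren--Federer dimension reduction argument applies verbatim: stratify by density value, use a Federer--type covering/blow-up dichotomy, and conclude by induction on $N$ that $\dim_{\mathcal H}\mathcal{S}^E_k\le k$. The base case is the fact, essentially from Theorem \ref{thmn: compl n-3 intro}, that in dimension $N-1$ the minimizing cone over $Z$ is a half-space (interior regular) or a half of $\R^{N-1}_+$ (boundary regular), so the singular set is empty there; equivalently $\mathcal{S}^E_k=\emptyset$ when the ambient has no room, anchoring the induction.

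More precisely, I would argue by induction on the number $N$ and on the stratum index as follows. Suppose the estimate $\dim_{\mathcal H}\mathcal{S}^{E'}_{k'}\le k'$ holds in all $\RCD(K',N')$ spaces with $N'<N$. Fix a locally perimeter minimizing $E$ in the $\RCD(K,N)$ space $X$. The Federer dimension reduction principle (see e.g.\ \cite{Federerstrata}, \cite{Morganbook}, or the $\RCD$ adaptation in \cite{GigliNonCollapsed}) states: if $\mathcal{A}\subset\partial E$ is a set with $\mathcal{H}^{s}(\mathcal{A})>0$ for some $s>k$, then there is a point $x_0\in\mathcal{A}$, a sequence of scales $r_i\to0$, and a blow-up $(Y,\rho,\mathcal{H}^N,y_0,F)=C(Z)$ at $x_0$ which, by translation and taking further tangents at the "bad" points, can be assumed to be of the form $\R^{s'}\times C(Z')$ with $s'\ge s>k$ and a perimeter minimizer $F=\R^{s'}\times C(G')$, $G'\subset Z'$ global perimeter minimizer. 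But this exactly says the symmetry of $x_0$ is $\ge s'>k$, contradicting $x_0\in\mathcal{S}^E_k$. Hence $\mathcal{H}^s(\mathcal{S}^E_k)=0$ for all $s>k$, i.e.\ $\dim_{\mathcal H}\mathcal{S}^E_k\le k$. The inductive hypothesis enters in guaranteeing that the iterated tangents $C(Z')$ stay genuine minimizing cones in strictly lower-dimensional $\RCD$ cones, so the reduction terminates.

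The main obstacle, and where most of the technical work lies, is verifying the cone splitting lemma (i) and the precise compactness/closure statements for minimizing sets and their densities in the $\RCD$ category: one must know that the pMGH limit of minimizers is a minimizer, that perimeter measures converge without loss of mass (no "hidden" boundary escapes), and that the splitting of a tangent cone into $C(W)\times\R^j$ is compatible with the minimizing property of the set, i.e.\ that $G$ in $F=G\times\R^j$ is itself a \emph{global} (not merely local) perimeter minimizer in the lower-dimensional factor. The first two are available from \cite{WeakLaplacian} and the $\RCD$ GMH-stability theory; the last is the delicate normalization point, handled by noting that blow-ups produce cones and cones that are products of global minimizers, so the factor inherits global minimality by a Fubini-type slicing of competitors. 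Once these structural facts are in place, the dimension reduction is formal, and the estimate $\dim_{\mathcal H}\mathcal{S}^E_k\le k$ follows; combined with Theorem \ref{thmn: compl n-3 intro} and the codimension estimates for $\mathcal{R}^E_{\partial X}$, one gets the full picture of the singular set.
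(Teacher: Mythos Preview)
Your proposal has a genuine gap at the very outset: you assume that ``the monotone quantity is automatically constant along the blow-up sequence'' and hence that every tangent $(Y,\rho,\mathcal{H}^N,y,F)$ to $(X,E)$ at a point of $\partial E$ has $F=C(G)$ a \emph{cone}. But Theorem~\ref{monotonicity formula thm} only gives monotonicity of $\Phi(r)=\Per(E;B_r(O))/r^{N-1}$ for a minimizer $E$ inside a metric measure cone $C(X)$, and only for balls centered at the tip $O$. There is no monotonicity formula available on the original $\RCD(K,N)$ space $(X,\sfd,\mathcal{H}^N)$, and indeed the paper explicitly records as an \emph{open question} whether $\lim_{r\to0}\Per(E;B_r(x))/r^{N-1}$ exists for all $x\in\partial E$. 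Consequently your density $\theta(x)$ is not known to exist, its upper semicontinuity is unavailable, and you cannot conclude that a single blow-up of $E$ is conical. The classical Federer scheme you outline hinges precisely on this missing ingredient.

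The paper circumvents this by an \emph{iterated} blow-up argument. After one blow-up at a density point for $\mathcal{H}^{k'}_\infty$ restricted to the quantitative stratum $S^E_{k,\bar\varepsilon}$, one lands in a cone $C(Z)$ with a minimizer $F$ that is \emph{not} yet known to be a cone. The monotonicity formula now applies in $C(Z)$, but only at vertices; so one must first arrange, by further blow-ups (each of which increases the dimension of the Euclidean spine of the ambient cone), that the density point is itself a vertex of the ambient cone. Only then can one invoke Theorem~\ref{monotonicity formula thm} to produce a conical minimizer $G$, and then blow up once more at a non-vertex point to gain an $\R$-splitting of both the ambient and the set (via a limiting argument with the functions $f_i(\cdot)=r_i^{-1}(\sfd_C^2(O,\cdot)-\sfd_C^2(O,O'))$, which converge to a splitting function). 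Your proposal collapses all of this into a single step that the available tools do not justify.
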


We point out that the Hausdorff dimension estimate for the top dimensional singular stratum had already been established in \cite{Dingarea1} (for limits of sequences of codimension one area minimizing currents in smooth Riemannian manifolds with Ricci curvature and volume lower bounds) and independently by the second and the third author in \cite{WeakLaplacian} (in the same setting of the present paper, under the additional assumption that $\partial X=\emptyset$). Elementary examples illustrate that the Hausdorff dimension estimates above are sharp in the present setting.
\smallskip

With respect to the classical \cite{Federerstrata} or \cite{CheegerColdingStructure1,GigliNonCollapsed}, in the proof of Theorem \ref{thm: strat intro} we need to handle the additional difficulty that a monotonicity formula does not hold directly on the ambient space.
\medskip

Another application of the monotonicity formula with the associated rigidity is that if an $\RCD(0,N)$ space $(X,\sfd,\mathcal{H}^N)$  with Euclidean volume growth contains a global perimeter minimizer, then any asymptotic cone  contains a  perimeter minimizing cone. 

\begin{theorem}\label{thm:blowdownintro}
Let $(X,\sfd,\mathcal{H}^N)$ be an $\RCD(0,N)$ metric measure space with Euclidean volume growth, i.e. satisfying for some (and thus for every) $x\in X$:
\begin{equation}\label{eq:AssEVG}
\lim_{r\to \infty} \frac{\mathcal{H}^N(B_r(x))}{r^N} >0.
\end{equation}
Let $E\subset X$ be a global perimeter minimizer. Then for any blow-down $(C(Z),\sfd_{C(Z)},\mathcal{H}^N)$ of  $(X,\sfd,\mathcal{H}^N)$ there exists a cone $C(W)\subset C(Z)$ global perimeter minimizer.
\end{theorem}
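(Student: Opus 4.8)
The plan is to realize the given blow-down along a sequence of rescalings for which the minimizer $E$ survives, producing a global perimeter minimizer $F$ inside the fixed cone $C(Z)$, and then to blow down $F$ once more---this time using the self-similarity of $C(Z)$---so that the monotonicity formula of Theorem~\ref{monotonicity formula thm} forces the second limit to be a cone. We may assume $E$ is nontrivial (if $\mathcal{H}^N(E)=0$ or $\mathcal{H}^N(X\setminus E)=0$ one takes $W=\emptyset$ or $W=Z$), so that $\partial E\neq\emptyset$ and $d_0:=\dist(x_0,\partial E)<\infty$ for a fixed base point $x_0\in X$. Let $r_i\to\infty$ be a sequence realizing the given blow-down, so that $X_i:=(X,\sfd/r_i,\mathcal{H}^N/r_i^N,O_i)\to(C(Z),\sfd_{C(Z)},\mathcal{H}^N,O)$ in the pointed measured Gromov--Hausdorff topology, where $O_i$ is $x_0$ viewed in $X_i$ and $O$ is the tip; regard $E$ as a set $E_i$ in each $X_i$.

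A first ingredient is the uniform bound $\Per(E;B_\rho(x_0))\le C_N\rho^{N-1}$ for all large $\rho$: it follows from the Ahlfors upper regularity of the perimeter of a minimizer, $\Per(E;B_s(p))\le C_N s^{N-1}$ for $p\in\partial E$ (see \cite{WeakLaplacian}, or by direct comparison), together with the Bishop--Gromov bound $\mathcal{H}^N(B_s(p))\le C_N s^N$ on $\RCD(0,N)$ spaces and a covering argument. Hence $\Per_{X_i}(E_i;B_R(O_i))=r_i^{-(N-1)}\Per(E;B_{r_iR}(x_0))\le C_N R^{N-1}$, uniformly in $i$. By compactness of sets of locally finite perimeter under pmGH convergence and stability of global perimeter minimizers (see \cite{WeakLaplacian}), up to a subsequence $E_i\to F$ in $L^1_{\mathrm{loc}}$, with $F\subset C(Z)$ a global perimeter minimizer satisfying $\Per(F;B_R(O))\le C_N R^{N-1}$. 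Since $\dist_{X_i}(O_i,\partial E_i)=d_0/r_i\to0$, there are $q_i\in\partial E_i$ with $q_i\to O$; the lower density estimates for minimizers ($\mathcal{H}^N(E\cap B_s(p))\ge c\,s^N$ and $\mathcal{H}^N(B_s(p)\setminus E)\ge c\,s^N$ for $p\in\partial E$, with $c>0$ uniform) pass to the limit and give $O\in\partial F$; in particular $F$ is nontrivial.

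Now $C(Z)$ is an $\RCD(0,N)$ metric measure cone (when $N=2$ the bound $\mathrm{diam}(Z)\le\pi$ required by Theorem~\ref{monotonicity formula thm} is automatic, being forced by $C(Z)$ being $\RCD(0,2)$), so Theorem~\ref{monotonicity formula thm} applies to $F$: the function $\Phi_F(r):=\Per(F;B_r(O))/r^{N-1}$ is non-decreasing, bounded above by $C_N$, and $\Phi_F(r)>0$ for every $r>0$ since $O\in\partial F$; hence $L:=\lim_{r\to\infty}\Phi_F(r)\in(0,\infty)$. Let $\delta_\lambda\colon C(Z)\to C(Z)$, $\delta_\lambda(t,z)=(\lambda t,z)$, be the cone dilation, a homothety of $(C(Z),\sfd_{C(Z)},\mathcal{H}^N)$ (it scales distances by $\lambda$ and $\mathcal{H}^N$ by $\lambda^N$), and set $F_\lambda:=\delta_{1/\lambda}(F)$. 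Each $F_\lambda$ is again a global perimeter minimizer in $C(Z)$ with $O\in\partial F_\lambda$, and $\Phi_{F_\lambda}(r)=\Phi_F(\lambda r)$. The $F_\lambda$ have perimeter uniformly bounded on balls, so along some $\lambda_j\to\infty$ we get $F_{\lambda_j}\to F_\infty$ in $L^1_{\mathrm{loc}}$, with $F_\infty\subset C(Z)$ a nontrivial global perimeter minimizer and $O\in\partial F_\infty$. For perimeter minimizers there is no loss of perimeter in the limit, i.e.\ $\Per(F_{\lambda_j};\cdot)\weakto\Per(F_\infty;\cdot)$ as measures (see \cite{WeakLaplacian}); since $\Per(F_\infty;\partial B_r(O))=0$ for all but countably many $r$, this gives $\Phi_{F_\infty}(r)=\lim_j\Phi_{F_{\lambda_j}}(r)=\lim_j\Phi_F(\lambda_j r)=L$ for a.e.\ $r$, hence for every $r$ by monotonicity of $\Phi_{F_\infty}$. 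Thus $\Phi_{F_\infty}$ is constant on $(0,\infty)$, so by the rigidity part of Theorem~\ref{monotonicity formula thm} there is $W\subset Z$ with $F_\infty=C(W)$, and $C(W)\subset C(Z)$ is the desired perimeter minimizing cone.

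The main obstacle is the stability theory of perimeter minimizers in this low-regularity setting: one needs that $L^1_{\mathrm{loc}}$-limits of global perimeter minimizers---both under pmGH convergence and under dilations of a fixed cone---are again global perimeter minimizers, and, crucially, that for such minimizers the perimeter measures converge with no loss of mass, so that the monotone quantity $\Phi$ passes to the limit; this is precisely what makes the two successive blow-downs effective. The remaining inputs (the polynomial upper bound on the perimeter via Bishop--Gromov and Ahlfors regularity, and the lower density estimates ensuring that nontriviality and the condition $O\in\partial(\,\cdot\,)$ survive in the limit) are by now standard in the $\RCD$ framework.
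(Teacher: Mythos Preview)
Your proof is correct and follows essentially the same two-stage strategy as the paper: first pass to a blow-down of $(X,E)$ to obtain a global perimeter minimizer $F$ inside the cone $C(Z)$ using Ahlfors-type bounds together with the compactness/stability theory from \cite{WeakLaplacian}, then perform a second blow-down inside $C(Z)$ so that the monotonicity formula (Theorem~\ref{monotonicity formula thm}) forces the limit to be conical. The paper's own proof is more terse---it centers the rescalings directly at a point of $\partial E$ (which is legitimate since blow-downs are independent of base point) and merely says one applies the monotonicity ``up to possibly taking an additional blow-down''---whereas you spell out the second step explicitly via the cone dilations $\delta_\lambda$ and the convergence of perimeter measures, and you also justify $O\in\partial F$ by tracking boundary points; but these are elaborations of the same argument rather than a different route.
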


The conclusion of Theorem \ref{thm:blowdownintro} above seems to be new also in the more classical case of smooth Riemannian manifolds with non-negative sectional curvature, or nonnegative Ricci curvature. We refer to \cite{Andersonmini} for earlier progress in the case of smooth manifolds with non-negative sectional curvature satisfying additional conditions on the rate of convergence to the tangent cone at infinity and on the regularity of the cross section, and to the more recent \cite{DingJostXin} for the case of smooth Riemannian manifolds with non-negative Ricci curvature and quadratic curvature decay.

\medskip

Related to the open question that we raised above, to the best of our knowledge it is not currently known whether in the setting of Theorem \ref{thm:blowdownintro} any blow down of the perimeter minimizing set must actually be a cone.

\medskip

\textbf{Acknowledgements.}

The first author is supported by the EPSRC-UKRI grant “Maths DTP 2021-22”, at the University of Oxford, with reference EP/W523781/1.

The second author is supported by the European Research Council (ERC), under the European's Union Horizon 2020 research and innovation programme, via the ERC Starting Grant  “CURVATURE”, grant agreement No. 802689.

The last author was supported by the European Research Council (ERC), under the European's Union Horizon 2020 research and innovation programme, via the ERC Starting Grant  “CURVATURE”, grant agreement No. 802689, while he was employed at the University of Oxford until August 2022. He was supported by the Fields Institute for Research in Mathematical Sciences with a Marsden Fellowship from September 2022 to December 2022. He is currently supported by the FIM-ETH Z\"urich with a Hermann Weyl Instructorship. He is grateful to these institutions for the excellent working conditions during the completion of this work.

\section{Preliminaries}\label{sec:preliminaries}

In this work, a \emph{metric measure space} (m.m.s. for short) is a triple $(X,\sfd,\mm)$, where $(X,\sfd)$ is a complete and separable metric space and $\mm$ is a non-negative Borel measure on $X$ of full support (i.e. $\mathrm{supp}\, \mm=X$), called the ambient or reference measure, which is finite on metric balls. We write $B_r(x)$ for the open ball centered at $x\in X$ of radius $r>0$.  Under our working conditions, the closed metric balls are compact, so we assume from the beginning the metric space $(X,\sfd)$ to be proper.
Let $(X,\sfd,\mm)$ be a m.m.s. and fix $x \in X$. The quadruple $(X,\sfd,\mm,x)$ is called \emph{pointed} metric measure space.

We will denote with $L^p(X;\m) := \left \{ u: X \to \R : \int |u|^p \d \mm < \infty\right\}$ the space of $p$-integrable functions; sometimes, if it is clear from the context which space and measure we are considering, we will simply write $L^p$ or $L^p(X)$. 

Given a function $u:X \xrightarrow[]{}\R$, we define its local Lipschitz constant at $x \in X$ by 
$$
    \mathrm{lip}(u)(x) := \limsup_{y\xrightarrow[]{}x} \frac{|u(x)-u(y)|}{\sfd(x,y)}\qquad \mbox{if } x \in X \mbox{ is an accumulation point},
$$
and $\mathrm{lip}(u)(x) = 0$ otherwise.
We indicate by $\mathrm{LIP}(X)$ and $\mathrm{LIP}_\mathrm{loc}(X)$ the space of Lipschitz functions, and locally Lipschitz functions, respectively. We also denote by $\C_b(X)$ and $\C_\mathrm{bs}(X)$ the space of bounded continuous functions, and the space of continuous functions with bounded support, respectively. 
\smallskip

We assume that the reader is familiar with the notion and basic properties of $\RCD$ spaces. Let us just briefly recall \cite{Sturm1, Sturm2, LottVillani} that a $\CD(K,N)$ metric measure space $(X,\sfd,\mm)$ has Ricci curvature bounded below by $K\in \R$ and dimension bounded above by $N\in [1,+\infty]$ in a synthetic sense, via optimal transport.  The $\RCD(K,N)$ condition is a refinement of the $\CD(K,N)$ one, obtained by adding the assumption that the heat flow is linear or, equivalently, that the Sobolev space $W^{1,2}(X,\sfd,\mm)$ is a Hilbert space or, equivalently, that the Laplacian is a linear operator. The $\RCD$ condition was first introduced in the $N=\infty$ case in \cite{Ambrosio_2014} and then proposed in the $N<\infty$ case in \cite{GigliMemo2015}.   We refer the reader to the original papers \cite{Ambrosio_2014, GigliMemo2015, AGMR, EKS15, AMS19, CavallettiMilman}, or to the survey \cite{AmbICM} for more details. 

\subsection{Metric-measure cones}\label{SubSec:MMCones}

An $N$-metric measure cone over a measure metric space  $(X,\sfd_{X}, \m_X)$ is defined as the warped product 
$$C(X):= ([0,\infty) \times_{C} X, \d_{C}, \m_C)$$ obtained with $C_{\sfd} (r) = r^2 $ and $C_\m=r^{N - 1}$. See \cite{Gigli2018SobolevSO} for some background.\\ 
We will denote by $O\in C(X)$ the tip of the cone, given by  $\{O\} : = \pi(\{0\}\times X) \subset C(X)$. In what follows we will use a slight abuse of notation and denote $\pi(t,x)$ by $(t,x)$. In particular, $O = (0,x)$ for any $x \in X$. Moreover, we shall adopt the more intuitive notation $\d_C$, $\m_C$ to denote the distance and the reference measure on $C(X)$, when there is no risk of confusion.

There is an explicit expression for the distance between two points on a cone:
\begin{equation}\label{distance on a cone}
    \sfd^2_C ((r_1,x_1),(r_2,x_2)) = r_1^2 +r_2^2 - 2r_1 r_2 \cos (\sfd_X(x_1,x_2)\wedge \pi).
\end{equation}
In particular, we have
\begin{equation}\label{explicit expression for the distance}
    \sfd_C (O, (r,x))=r\, .
\end{equation}

The following result was obtained in \cite{ConeMetric}.
\begin{proposition}\label{metric cone definition}
Let $(X,\sfd_X,\m_X)$ be a metric measure space and let $N\ge 2$. Then $(C(X), \sfd_C, \m_C)$ is an $\mathrm{RCD}(0,N)$ m.m.s. if and only if $(X,\sfd_X,\mm_X)$ is $\mathrm{RCD}(N-2,N-1)$ and, in the case $N=2$, $\mathrm{diam}(X)\le \pi$.
\end{proposition}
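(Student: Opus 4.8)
\textbf{Proof plan for Proposition \ref{metric cone definition}.}

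The plan is to invoke the characterization of metric measure cones satisfying curvature-dimension conditions as developed by Ketterer, which is precisely the content cited in \cite{ConeMetric} (and stated in the introduction as \cite{KettererCones}). Concretely, I would reduce the statement to two implications and treat them separately, in both cases reading off the warping-function data $C_{\sfd}(r)=r^2$, $C_{\m}(r)=r^{N-1}$.

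First, the forward direction: assume $(C(X),\sfd_C,\m_C)$ is $\RCD(0,N)$. One extracts information about the cross section $(X,\sfd_X,\m_X)$ by a localization/blow-up argument near a point $(1,x)$ of the cone at unit distance from the tip: the cone is, in a neighborhood of such a point, a warped product over $X$ with warping coefficients close to those of the round suspension, and the $\RCD(0,N)$ condition on $C(X)$ forces, via the behavior of the Bishop--Gromov ratio and the Bochner inequality restricted to functions depending only on the $X$-variable, the bound $\mathrm{Ric}_X\ge N-2$ and $\dim_X\le N-1$ in the synthetic sense; infinitesimal Hilbertianity of $X$ is inherited from that of $C(X)$ since $W^{1,2}$ of the cone splits (away from the tip) compatibly with $W^{1,2}(X)$. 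The threshold $N-2$ and the dimension shift by $1$ come exactly from differentiating the warping identities $C_{\sfd}(r)=r^2$, $(\log C_{\m})'(r)=(N-1)/r$. In the borderline case $N=2$ the cross section is $1$-dimensional; here the condition $\mathrm{diam}(X)\le\pi$ is needed and is obtained by testing the $\CD$ (indeed measure-contraction / Bishop--Gromov) inequality along a geodesic of $C(X)$ lying in a single $2$-plane through the tip, where a diameter of $X$ exceeding $\pi$ would contradict the non-branching and the volume growth forced by $\RCD(0,2)$.

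Second, the converse: assume $(X,\sfd_X,\m_X)$ is $\RCD(N-2,N-1)$ (with $\mathrm{diam}(X)\le\pi$ if $N=2$). One must show $(C(X),\sfd_C,\m_C)$ is $\RCD(0,N)$. I would verify the two ingredients separately. For the dimensional curvature bound, the cleanest route is the characterization of $\RCD(0,N)$ via the Bochner inequality with the $N$-dimensional correction term (Bakry--\'Emery) together with infinitesimal Hilbertianity: on the cone one has an explicit description of the Laplacian and the Sobolev calculus of $C(X)$ in terms of those of $X$ (as recalled via \cite{Gigli2018SobolevSO}), namely for $f(r,x)=\phi(r)\psi(x)$ one computes $\Delta_C f$ in terms of $\phi''$, $\phi'$, $\Delta_X\psi$; plugging this into Bochner on the cone and using Bochner on $X$ with constant $N-2$ and dimension $N-1$ yields Bochner on $C(X)$ with constant $0$ and dimension $N$ — the algebra is exactly the classical warped-product computation, and the $\RCD(N-2,N-1)$ hypothesis is what makes the cross-section terms have the right sign. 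Infinitesimal Hilbertianity of $C(X)$ follows from that of $X$ because the Cheeger energy of the cone is a quadratic form whenever the Cheeger energy of $X$ is. Finally one checks essential non-branching and the finiteness/full-support of $\m_C$, which are elementary from \eqref{distance on a cone} and the formula $\m_C=r^{N-1}\,\d r\otimes\m_X$, and upgrades the Bochner inequality to the full $\RCD(0,N)$ condition by the known equivalence (Ambrosio--Gigli--Savar\'e, Erbar--Kuwada--Sturm, Ambrosio--Mondino--Savar\'e) between the Eulerian and Lagrangian formulations.

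The main obstacle is the borderline case $N=2$ and, more generally, keeping track of the behavior at the tip $O$: all the warped-product computations above are carried out on $C(X)\setminus\{O\}$, and one must argue that the single point $O$ (which is $\m_C$-negligible and of codimension $N\ge 2$) does not affect the validity of the synthetic conditions — for the Lagrangian $\CD$ formulation this requires checking that optimal transport geodesics can be chosen to avoid, or pass harmlessly through, the tip, and for the Eulerian formulation that test functions need not be controlled at $O$. Since Proposition \ref{metric cone definition} is quoted verbatim from \cite{ConeMetric}, in the paper itself this is simply cited; the sketch above indicates the structure of the original argument.
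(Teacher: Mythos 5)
Note first that the paper does not prove Proposition \ref{metric cone definition} at all: it is imported verbatim from Ketterer's work (\cite{ConeMetric}, see also \cite{KettererCones}), and the citation is the intended ``proof''. Your proposal correctly recognizes this, and the overall strategy you sketch (skew-product description of the Sobolev calculus of $C(X)$ away from the tip, a warped-product Bochner/$\Gamma_2$ computation, the Eulerian--Lagrangian equivalence of Erbar--Kuwada--Sturm and Ambrosio--Mondino--Savar\'e, and separate care at the tip and in the borderline case $N=2$) is indeed in the spirit of the cited argument. So as a guide to where the statement comes from, the proposal is on target.

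As a standalone argument, however, it has concrete gaps. (a) In the forward direction, the ``localization/blow-up argument near a point $(1,x)$'' cannot, as phrased, produce the constant $N-2$: rescaling destroys positive curvature lower bounds, so any blow-up of an $\RCD(0,N)$ space is again only $\RCD(0,N)$, and no strictly positive bound on the cross section can be read off a tangent. What is actually needed is the unit-scale warped-product $\Gamma_2$ computation (test functions with nontrivial joint dependence on $r$ and $x$, exploiting that even $x$-only functions have $|\nabla f|^2(r,x)=r^{-2}|\nabla_X\psi|^2(x)$), which is precisely the technical heart of Ketterer's proof and is only asserted in your sketch. (b) By \eqref{distance on a cone}, the cone metric only sees $\sfd_X\wedge\pi$; hence in the forward direction one must first exclude ${\rm diam}(X)>\pi$ for \emph{every} $N\ge 2$, not only $N=2$, before any conclusion about $(X,\sfd_X)$ itself (rather than the truncated metric) can be drawn. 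The branching-at-the-tip argument you invoke for $N=2$ is the right idea, but it has to be run in general and made quantitative enough to contradict essential non-branching for a positive-measure family of geodesics. (c) The degenerate warping at $O$ and the identification of $W^{1,2}(C(X))$ across the tip --- which you defer in the final paragraph --- is exactly where the elementary warped-product computation stops being routine. Since the proposition is quoted from the literature, citing \cite{ConeMetric} is the appropriate proof in this paper; if you wanted a self-contained argument, these three points are where the real work lies.
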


\begin{remark}
If $N>2$, then the diameter bound $\mathrm{diam}(X)\le \pi$ follows already from the $\RCD(N-2,N-1)$ condition, by the Bonnet-Meyers theorem for $\CD$ spaces.   
\end{remark}

Next, we show a result relating radial derivatives of functions defined on cones and the distance from the tip of the cone. The gradient of such distance function, in some sense, corresponds to the position vector field in the Euclidean setting. Some of the identities of the proof, obtained using basic calculus in the metric setting, will be used later in this work. The following result uses the BL characterization found in \cite{Gigli2018SobolevSO}, section 3.2.

\begin{proposition}\label{characterization of scalar product}
Let $(C(X), \sfd_C, \m_C)$ be an $\RCD(0,N)$ cone over some $\RCD(N-2,N-1)$ space $(X,\d_X,\m_X)$ and $f \in W^{1,2}(X)$. Let $f^{(t)}(x) := f(t,x)$ and $f^{(x)}(t) := f(t,x)$, for $(t,x) \in C(X)$. Then
\begin{equation}
    |\nabla f^{(x)}| (t) =\frac{1}{2t}| \nabla f (t,x) \cdot \nabla \sfd_C^2 (O, \cdot) (t,x)|\, \quad \text{for $\m$-a.e. $x\in X$ and $\mathcal{L}^1$-a.e. $r\in(0,\infty)$}\, .
\end{equation}
\begin{proof}
By \cite{Gigli2018SobolevSO}, section 3.2, $f^{(t)} \in W^{1,2}(X)$ and $f^{(x)} \in W^{1,2}([0,\infty))$ for $\mathcal{L}^1$-a.e.\;$t \in (0,\infty)$ and $\m$-a.e.\;$x \in X$, respectively. By using the BL characterization of $W^{1,2}(C(X))$ functions (cf. \cite{Gigli2018SobolevSO}, section 3.2) and the polarization identity, we have 
    \begin{align}
    \nabla f (t,x)&\cdot \nabla \sfd_C^2(O, \cdot) (t,x)= \nonumber \\
    = &\frac{1}{2} |\nabla (f +\sfd_C^2(O,\cdot))|^2(t,x) - \frac{1}{2} |\nabla(f-\sfd_C^2(O,\cdot))|^2 (t,x)\nonumber \\
    = &\frac{1}{2} |\partial_r (f^{(x)}(t) +\sfd_{C,(x)}^2(t))|^2 - \frac{1}{2} |\partial_r (f^{(x)}(t)-\sfd_{C,(x)}^2(t))|^2\label{third equality}\\
    =&\frac{1}{2} |\partial_r f^{(x)} +2t|^2 - \frac{1}{2} |\partial_r f^{(x)}(t)-2t|^2\label{fourth equation}\\
    =& 2t\, \partial_r f^{(x)}(t)= 2t \, \mathrm{sign}(\partial_r f^{(x)}(t))\, |\nabla f^{(x)}|(t)\, ;\nonumber
    \end{align}
where we have denoted $\sfd_C(O,(t,x))^{(x)}$ by $\sfd_{C,(x)}(t)$. Moreover, we have used the fact that $|\nabla (f^{(t)} +(\sfd_C^2(O,\cdot))^{(t)})|^2(x)= |\nabla f^{(t)}|(x)$ since $(\sfd_C(O,\cdot))^{(t)}$ is constant. We have also exploited the identification between different notions of derivatives (by, say, \cite[Th.\;2.1.37]{GigliPasqualetto}) $|\nabla g|=|\partial_rg|$ for smooth functions $g:[0,\infty)\to \R$ and the explicit formula for the radial sections of the distance function from the origin given by \eqref{explicit expression for the distance}.
Let us also point out that \eqref{third equality} shows that
\begin{equation}\label{radial scalar product}
    \nabla f (t,x)\cdot \nabla (\sfd_C^2(O,(t,x))  = \nabla f^{(x)}(t)\cdot \nabla (\sfd^2_{C, (x)})(t)\, .
\end{equation}
Lastly, we point out that the following equality
\begin{equation}\label{distance squared to distance}
    |\nabla  \sfd_C(O,\cdot)|(t,x) = |\partial_r \sfd_{C,(x)} (t)| = \frac{1}{2t} |\nabla \sfd_C^2(O,\cdot)|(t,x)\, ,
\end{equation}
implies
\begin{equation}\label{scalar product characteerization without square}
    |\nabla f^{(x)}| (t) =| \nabla f (t,x) \cdot \nabla (\sfd_C (O, (t,x)))|\, .
\end{equation}
See \cite[Cor.\;3.6]{ConeMetric}.
\end{proof}
\end{proposition}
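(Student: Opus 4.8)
The statement to prove is the pointwise identity
$|\nabla f^{(x)}|(t)=\tfrac{1}{2t}\,|\nabla f(t,x)\cdot\nabla\sfd_C^2(O,\cdot)(t,x)|$,
valid for $\m$-a.e.\ $x\in X$ and $\mathcal L^1$-a.e.\ $t\in(0,\infty)$, where $f\in W^{1,2}(X)$ is viewed as a function on $C(X)$ via $f(t,x):=f(x)$ (so it has no radial dependence), and the scalar product $\nabla g\cdot\nabla h$ of two Sobolev functions on the $\RCD$ space $C(X)$ is the one defined via polarization of the minimal relaxed slopes. The plan is to unwind both sides using the Beppo Levi (BL) description of $W^{1,2}(C(X))$ from \cite{Gigli2018SobolevSO}, Section 3.2, which says that for $u\in W^{1,2}(C(X))$ the radial slices $u^{(x)}\in W^{1,2}((0,\infty))$ and the spherical slices $u^{(t)}\in W^{1,2}(X)$ for a.e.\ $t$, a.e.\ $x$, and that
$|\nabla u|^2(t,x)=|\partial_r u^{(x)}|^2(t)+\tfrac{1}{t^2}|\nabla u^{(t)}|^2_X(x)$
for $\m_C$-a.e.\ $(t,x)$; here $\partial_r$ denotes the $1$-dimensional minimal slope on $(0,\infty)$.

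First I would verify the measurability/a.e.\ statements: since $f\in W^{1,2}(X)$ and $\sfd_C^2(O,\cdot)$ is locally Lipschitz on $C(X)$ with, by \eqref{explicit expression for the distance}, $\sfd_C^2(O,(t,x))=t^2$, both $f$ and $\sfd_C^2(O,\cdot)$ and their linear combinations lie in $W^{1,2}_{\rm loc}(C(X))$, so the BL slicing applies to $f\pm\sfd_C^2(O,\cdot)$ on any annular region, hence a.e.\ on $(0,\infty)\times X$. Next, the core computation, essentially the displayed chain \eqref{third equality}--\eqref{fourth equation}: by polarization,
$\nabla f(t,x)\cdot\nabla\sfd_C^2(O,\cdot)(t,x)=\tfrac14|\nabla(f+\sfd_C^2(O,\cdot))|^2(t,x)-\tfrac14|\nabla(f-\sfd_C^2(O,\cdot))|^2(t,x)$;
apply the BL formula to each term. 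Because $f^{(t)}$ does not depend on $t$ and $(\sfd_C^2(O,\cdot))^{(t)}$ is constant in $x$, the spherical parts cancel in the difference, leaving only the radial contributions:
$\tfrac14|\partial_r(f^{(x)}+r^2)|^2(t)-\tfrac14|\partial_r(f^{(x)}-r^2)|^2(t)$.
Here I must invoke the identification \cite[Th.\,2.1.37]{GigliPasqualetto} that for Sobolev functions on $(0,\infty)$ the minimal slope coincides with the a.e.\ classical derivative, so $\partial_r(r^2)=2t$ a.e., and then the elementary algebra $\tfrac14(a+2t)^2-\tfrac14(a-2t)^2=2ta$ with $a=\partial_r f^{(x)}(t)$ yields $2t\,\partial_r f^{(x)}(t)$. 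Taking absolute values and using $|\partial_r f^{(x)}|(t)=|\nabla f^{(x)}|(t)$ gives exactly the claim. Along the way I would also record \eqref{radial scalar product} (the scalar product $\nabla f\cdot\nabla\sfd_C^2(O,\cdot)$ equals the purely $1$-dimensional one $\partial_r f^{(x)}\cdot\partial_r(\sfd_{C,(x)}^2)$) and \eqref{distance squared to distance}--\eqref{scalar product characteerization without square}, the latter following from $|\nabla\sfd_C(O,\cdot)|(t,x)=|\partial_r t|=1=\tfrac{1}{2t}|\partial_r t^2|=\tfrac{1}{2t}|\nabla\sfd_C^2(O,\cdot)|(t,x)$, cf.\ \cite[Cor.\,3.6]{ConeMetric}.

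The main obstacle is not any single inequality but the careful bookkeeping of \emph{which} differential object each symbol denotes and that the several notions of slope/gradient genuinely agree: the minimal relaxed slope on $C(X)$, its BL decomposition into a radial $1$-dimensional slope and a spherical $W^{1,2}(X)$ slope, and the a.e.\ classical derivative for one-variable Sobolev functions. Once these identifications are in place (via \cite{Gigli2018SobolevSO} and \cite[Th.\,2.1.37]{GigliPasqualetto}), the cancellation of the spherical terms is immediate because $f$ is constant in $r$ and $\sfd_C(O,\cdot)$ is constant in $x$, and the rest is the polarization algebra above. I would also remark that the absolute value on the right-hand side is necessary precisely because the scalar product $\nabla f\cdot\nabla\sfd_C^2(O,\cdot)=2t\,\partial_r f^{(x)}$ carries the sign of the radial derivative, consistently with the last displayed line $=2t\,\mathrm{sign}(\partial_r f^{(x)}(t))\,|\nabla f^{(x)}|(t)$.
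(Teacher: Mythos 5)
Your argument is essentially the paper's own proof: polarization of the scalar product, the Beppo Levi decomposition of $|\nabla\cdot|^2$ on the cone from \cite{Gigli2018SobolevSO}, cancellation of the spherical contributions, and identification of the one-dimensional minimal slope with the a.e.\ classical derivative; you even use the correct polarization constant $\tfrac14$ (the paper's displayed $\tfrac12$'s are a harmless slip, since its final outcome $2t\,\partial_r f^{(x)}(t)$ matches yours). One caveat: you read the hypothesis ``$f\in W^{1,2}(X)$'' literally and extend $f$ radially by $f(t,x):=f(x)$, which would make both sides of the claimed identity vanish identically (so the proposition becomes trivially $0=0$); in the paper the function is meant to be a Sobolev function on the cone itself --- the ``$X$'' is evidently a slip for $C(X)$, as the slicings $f^{(t)}$, $f^{(x)}$ and the later application to Lipschitz functions on $C(X)$ in the proof of Theorem \ref{Thm:MonotonicityBody} show. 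Fortunately your computation never actually uses radial constancy of $f$: the cancellation of the spherical terms needs only that $(\sfd_C^2(O,\cdot))^{(t)}$ is constant in $x$ (your clause ``because $f^{(t)}$ does not depend on $t$'' is superfluous), so the argument, including your careful localization to annuli to handle $\sfd_C^2(O,\cdot)\in W^{1,2}_{\rm loc}$, goes through verbatim for general $f\in W^{1,2}_{\rm loc}(C(X))$, which is the intended statement.
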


\subsection{Finite perimeter sets in $\mathrm{RCD}$ spaces}\label{section| preliminaries: finite perimter sets RCD}

Let $(X,\sfd,\mm)$ be a metric measure space,  $u \in L^1_{\mathrm{loc}}(X)$ and  $\Omega \subset X$ an open set. The total variation norm of $u$ evaluated on $\Omega$ is defined by
\begin{equation}\label{definition of total variation measure}
|D u|(\Omega) := \inf \left \{\liminf_{j \xrightarrow[]{}\infty} \int_\Omega \mathrm{lip}(u)(y) \, \d \m \right \}\, ,
\end{equation}
where the infimum is taken over all sequences $(u_j) \subset \mathrm{Lip}_{\mathrm{loc}}(X)$ such that $u_j \xrightarrow[]{} u$ in $L^1_{\mathrm{loc}}(X)$. 

A function $u \in L^1(X)$ is said to have bounded variation if its total variation $|Du|(X)$ is finite. In this case one can prove that $|Du|$ can be extended to a Borel measure on $X$. The space of functions of bounded variation is denoted by $\BV(X)$.

A set $E\subset X$ is of locally finite perimeter if, for all $x \in X$ and $R>0$ there holds
\begin{equation*}
    \mathrm{Per}(E; B_R(x)): = \inf \left\{\liminf_{i \to \infty} \int_{B_r(x)} \mathrm{lip} (f_i) \, \d\m\, : \, \{f_i\} \subset \mathrm{Lip}_\mathrm{loc}(X), \, f_i \overset{L^1_{loc}}{\longrightarrow} \chi_E \right\} < \infty\, .
\end{equation*}

We denote the perimeter measure of a locally finite perimeter set $E $ by $\mathrm{Per}(E)$. Let us point out that this coincides with the variational measure $|D\chi_E|$ defined in \eqref{definition of total variation measure}. We adopt the following notation: given a Borel set $A \subset X$, $\mathrm{Per}(E;A):= |D\chi_E(A)|$.

An important tool for what follows is the coarea formula (cf. \cite{mir00}, theorem 2.16):

\begin{theorem}[Coarea Formula]
Let $(X,\sfd,\mm)$ be an $\mathrm{RCD}(K,N)$ space and $v \in \BV(X)$. Then $\{v>r\}$ has finite perimeter for $\mathcal{L}^1$-a.e.\;$r$ and %
for any Borel function $f :X \to \R$ it holds 
\begin{equation}
    \label{coarea formula 1}
    \int_X f \,\d |Dv| = \int_{\R} \left ( \int_X f\, \dPer(\{v>r\}) \right) \d r\, ,
\end{equation}
where the term $\d|Dv|$ in the left hand side denotes that the integral is computed with respect to the total variation measure $|Dv|$.
\end{theorem}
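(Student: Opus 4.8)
The plan is to reduce the statement to the already-available total variation theory for $\BV$ functions, exploiting the fact that the coarea formula in the form \eqref{coarea formula 1} is equivalent to its ``measure-valued'' version $|Dv| = \int_{\R} \Per(\{v>r\};\cdot)\,\d r$ as measures on $X$. I would first recall, from \cite{mir00} (or the general metric $\BV$ theory), that for $v\in\BV(X)$ the superlevel sets $\{v>r\}$ have finite perimeter for $\mathcal L^1$-a.e.\ $r$, and that the map $r\mapsto \Per(\{v>r\};A)$ is Borel measurable for every Borel $A\subset X$, so that the inner integral $\int_X f\,\d\Per(\{v>r\})$ and the outer integral over $r$ are well defined whenever $f$ is, say, bounded Borel with bounded support. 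The key structural input is the identity of Borel measures
\begin{equation}\label{eq:coareameasure}
  |Dv|(A) = \int_{\R}\Per(\{v>r\};A)\,\d r \qquad\text{for every Borel }A\subset X,
\end{equation}
which is exactly Miranda's coarea theorem at the level of measures in the $\RCD$ (indeed, general locally compact length-space) setting. Granting \eqref{eq:coareameasure}, the stated formula \eqref{coarea formula 1} follows by the standard machinery of approximating Borel functions by simple functions.

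The steps, in order, are as follows. First, establish \eqref{eq:coareameasure} for $v\in\BV(X)$: the inequality $|Dv|(A)\le \int_\R \Per(\{v>r\};A)\,\d r$ comes from the layer-cake representation $v = \int_0^\infty \chi_{\{v>r\}}\,\d r - \int_{-\infty}^0 \chi_{\{v\le r\}}\,\d r$ together with lower semicontinuity and subadditivity of total variation under this superposition; the reverse inequality is obtained by a relaxation/approximation argument, taking Lipschitz approximations $v_j\to v$ in $L^1_{\mathrm{loc}}$ with $\int_A \mathrm{lip}(v_j)\to |Dv|(A)$, applying the classical coarea inequality for Lipschitz functions $\int_A \mathrm{lip}(v_j)\,\d\m \ge \int_\R \Per(\{v_j>r\};A)\,\d r$, and passing to the limit via Fatou together with the $L^1$-convergence of $\chi_{\{v_j>r\}}\to\chi_{\{v>r\}}$ for a.e.\ $r$ (after passing to a subsequence). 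This is precisely the content of \cite[Theorem 2.16]{mir00}, so in the write-up I would simply cite it for \eqref{eq:coareameasure} rather than reprove it. Second, with \eqref{eq:coareameasure} in hand, deduce \eqref{coarea formula 1}: it holds for $f=\chi_A$, $A$ Borel, by \eqref{eq:coareameasure} itself; hence it holds for nonnegative simple Borel functions by linearity of both sides in $f$; hence for nonnegative Borel $f$ by monotone convergence applied on the left to $|Dv|$ and on the right, using the measurability of $r\mapsto \int_X f\,\d\Per(\{v>r\})$, by monotone convergence in $r$ (the inner integrals are monotone in the approximating sequence of $f$'s); and finally for general Borel $f$ by splitting $f = f^+ - f^-$, noting integrability of each piece against $|Dv|$ translates, via the already-proven nonnegative case, into integrability of the $r$-integrand. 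Third, record the measurability and well-posedness points needed above — that $\{v>r\}$ is of finite perimeter for a.e.\ $r$ and that $r\mapsto\Per(\{v>r\};A)$ is Borel — which again are part of the cited statement.

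The main obstacle, and the only genuinely non-formal point, is the reverse inequality in \eqref{eq:coareameasure}, i.e.\ controlling $\int_\R \Per(\{v_j>r\};A)\,\d r$ from above by $\int_A \mathrm{lip}(v_j)\,\d\m$ for Lipschitz $v_j$ and then passing to the limit in $j$: one must handle the exceptional $r$'s (a null set, but depending on $j$) and ensure the limit measure identifies with $|Dv|$ rather than a larger measure, which requires using the specific minimizing sequence realizing $|Dv|(A)$ and a diagonal argument over a countable generating algebra of Borel sets. However, since this is exactly \cite[Theorem 2.16]{mir00}, quoted verbatim in the excerpt as the source of the Coarea Formula, the honest ``proof'' here is a citation plus the elementary simple-function bootstrap described above; no new argument in the $\RCD$ setting is needed, as the coarea formula for $\BV$ functions uses only the metric-measure structure and local compactness, not the curvature bound.
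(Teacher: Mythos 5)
Your proposal matches the paper's treatment: the paper gives no proof of this statement and simply cites \cite{mir00} (Theorem 2.16) for the coarea formula, which is exactly the core of your argument. The additional bootstrap from the measure-level identity to the version with a Borel test function $f$ via simple functions and monotone convergence is standard and correct.
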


Given a function $u\in \BV(X)$, we define the set $E_t := \{x \in X: u(x) \geq t$, $t \in \R\}$. As a consequence of the coarea formula, it holds
$$
    |Du|(C) = \int_\R \Per(E_t; C) \,\d t\, .
$$

We now look at a particular type of locally finite perimeter sets: cones inside cones. Let $(C(X), \sfd_C, \m_C)$ be as in Proposition \ref{metric cone definition}, $r>0$, $x \in X$. An important family of sets we will use later, in particular in the characterization of cones (see Lemma \ref{characterization of cones}), is the following
\begin{equation}\label{cone over ball inside a cone}
    C(B^X_r(x)):=\{(t,y) \in C(X): y \in  B^X_r(x)\}\, .
\end{equation}

\begin{lemma}\label{locally finite perimeter of a cone over a set}
The sets $C(B^X_r(x)) \subset C(X)$, $r>0$, are of locally finite perimeter.
\begin{proof}
We will prove the claim by exhibiting an explicit sequence of Lipschitz functions converging to $\chi_{C(B^X_r(x))}$. Let 
\begin{equation}
    \label{sequence approximating the cone over a ball}
    f_n(t,y) = \begin{cases}
    1 & \mbox{if } y \in B^X_r(x)\, ; \\
    nr +1 - n\sfd(y,x) & \mbox{if } y \in B^X_{r + \frac{1}{n}}(x)\setminus B^X_r(x)\, ; \\
    0 & \mbox{if } y \in X \setminus B^X_{r+\frac{1}{n}}(x))\, . 
    \end{cases}
\end{equation}
Clearly, $f_n$ is Lipschitz with bounded support, $|f_n| \leq 1$, hence $f_n \in W^{1,2}_\mathrm{loc}(C(X))$. Moreover, $f_n \to \chi_{C(B^X_r(x))}$ in $L^1_\mathrm{loc}(C(X))$. Indeed, for $p=(s,z) \in C(X)$, $R>0$
\begin{align*}
\int_{B_R(p)} |f_n - \chi_{C(B^X_r(x))}|\, \d\m_C &=  \int_{B_R(p)} |f_n| \chi_{C(B^X_{r + \frac{1}{n}}(x)\setminus B^X_r(x))}\d\m_C \\
&\leq \m_C\left( B_R(p)\cap C\left(B^X_{r + \frac{1}{n}}(x)\setminus B^X_r(x)\right)\right).
\end{align*}
 Using the Bishop Gromov inequality \cite{Sturm2}, we have 
$$
    \m(B^X_{r + \frac{1}{n}}(x)\setminus B^X_r(x)) = \m(B^X_r(x))\left(\frac{\m(B^X_{r + \frac{1}{n}}(x)}{\m(B^X_r(x))} - 1\right) 
\leq \m(B^X_r(x))\left(\frac{N-1}{rn} + O\left(\frac{1}{n^2}\right) \right)\, .
$$
Therefore, 
\begin{equation}\label{eq:momegaBR}
    \m_C\left(B_{R}(p) \cap C\left(B^X_{r + \frac{1}{n}}(x)\setminus B^X_r(x)\right)\right) = O\left(\frac{1}{n}\right)\, ,
\end{equation}
which implies  $f_n \to \chi_{C(B^X_r(x))}$ in $L^1_\mathrm{loc}(C(X))$.

Let us now show that 
\begin{equation}
\limsup_{n \to \infty}\int_{B_R(p)}\mathrm{lip}f_n \d\m_C < \infty\, ,
\end{equation}
for any $p=(s,z) \in C(X)$, $R>0$, which directly implies that $C(B^X_r(x))$ is a set of locally finite perimeter. 

It is elementary to check that
\begin{equation}
    \label{derivative of approximants to the cone over a ball}
    \mathrm{lip}f_n(t,x) = \begin{cases}
    n & \mbox{if } y \in B^X_{r + \frac{1}{n}}(x)\setminus B^X_r(x)\, ;\\
    0 & \mbox{otherwise}\, .
    \end{cases}
\end{equation}
Therefore, using \eqref{eq:momegaBR}, we obtain
\begin{align*}
\int_{B_R(p)}\mathrm{lip}f_n \d\m_C= n\; \m_C\left(B_{R}(p) \cap C\left(B^X_{r + \frac{1}{n}}(x)\setminus B^X_r(x)\right)\right)= O(1)\, , \quad \text{as } n\to \infty\, .
\end{align*}
\end{proof}
\end{lemma}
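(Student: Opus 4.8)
The plan is to check the definition of locally finite perimeter directly: for each fixed $p\in C(X)$ and $R>0$, I will exhibit a sequence $f_n\in\mathrm{LIP}_{\mathrm{loc}}(C(X))$ converging to $\chi_{C(B^X_r(x))}$ in $L^1_{\mathrm{loc}}(C(X))$ with $\liminf_n\int_{B_R(p)}\mathrm{lip}(f_n)\,\d\m_C<\infty$. The natural candidate is the pull-back to the cone of the standard $1/n$-Lipschitz cut-off of $\chi_{B^X_r(x)}$ on the cross-section, namely the radially-constant function $f_n(t,y):=\min\{1,\max\{0,\,nr+1-n\,\sfd_X(y,x)\}\}$, which is $n$-Lipschitz in the variable $y$ with respect to $\sfd_X$. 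Since the warped-product distance \eqref{distance on a cone} rescales cross-sectional distances at radius $t$ by the factor $t$ (concretely $\sfd_C((t,y),(t,z))\ge \tfrac{2}{\pi}\,t\,(\sfd_X(y,z)\wedge\pi)$), one gets the pointwise bound $\mathrm{lip}(f_n)(t,y)\le \tfrac{\pi}{2}\,n/t$, with support contained in the conical shell $C(B^X_{r+1/n}(x)\setminus B^X_r(x))$.

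Everything then reduces to a single measure estimate plus Fubini for the cone measure $\m_C=t^{N-1}\,\d t\otimes \m_X$. Since the cross-section $(X,\sfd_X,\m_X)$ is $\RCD(N-2,N-1)$ by Proposition \ref{metric cone definition}, the Bishop--Gromov inequality \cite{Sturm2} gives $\m_X\big(B^X_{r+1/n}(x)\setminus B^X_r(x)\big)=O(1/n)$. Hence, on one hand, $\int_{B_R(p)}|f_n-\chi_{C(B^X_r(x))}|\,\d\m_C\le \m_C\big(B_R(p)\cap C(B^X_{r+1/n}(x)\setminus B^X_r(x))\big)\le \tfrac1N(\sfd_C(O,p)+R)^N\,\m_X\big(B^X_{r+1/n}(x)\setminus B^X_r(x)\big)=O(1/n)\to 0$, giving the $L^1_{\mathrm{loc}}$ convergence; on the other hand,
\[
\int_{B_R(p)}\mathrm{lip}(f_n)\,\d\m_C\;\le\;\tfrac{\pi}{2}\,n\,\m_X\big(B^X_{r+1/n}(x)\setminus B^X_r(x)\big)\int_0^{\sfd_C(O,p)+R}t^{N-2}\,\d t\;=\;O(1),
\]
where finiteness of the radial integral is exactly where the hypothesis $N\ge 2$ enters (for $N=2$ it equals $\sfd_C(O,p)+R$). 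Passing to the $\liminf$ in $n$ yields $\Per(C(B^X_r(x));B_R(p))<\infty$ for every $p$ and $R$, which is the claim.

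The one delicate point, which I expect to be the main obstacle, is the behaviour at the tip $O$: because of the warping factor $t$, the function $f_n$ is not Lipschitz on any neighbourhood of $O$ (two points at radius $\varepsilon$ lying respectively inside $B^X_r(x)$ and outside $B^X_{r+1/n}(x)$ have $f_n$-difference $1$ but $\sfd_C$-distance at most $2\varepsilon$), so it is not literally a member of $\mathrm{LIP}_{\mathrm{loc}}(C(X))$. The fix is to precompose with a radial cut-off $\psi_m$ equal to $0$ on $B_{1/m}(O)$ and to $1$ outside $B_{2/m}(O)$: then $f_n\psi_m\in\mathrm{LIP}_{\mathrm{loc}}(C(X))$ (in fact globally Lipschitz, since it vanishes near $O$), it converges to $f_n$ in $L^1_{\mathrm{loc}}$ as $m\to\infty$, and the extra term it contributes to the perimeter integral is controlled by $\mathrm{lip}(\psi_m)\lesssim m$ on $B_{2/m}(O)$, hence by $m\cdot\m_C(B_{2/m}(O))\lesssim m\cdot m^{-N}=O(m^{1-N})\to 0$ (using $N\ge 2$ and the cone volume growth, together with $\m_X(X)<\infty$). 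A diagonal choice $m=m(n)$ then produces genuine $\mathrm{LIP}_{\mathrm{loc}}$ competitors with the same bounds, completing the argument. The remaining verifications --- the explicit form of $\mathrm{lip}(f_n)$, Fubini on the warped product, and the Bishop--Gromov expansion --- are routine.
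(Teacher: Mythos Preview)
Your approach is the same as the paper's: the identical radially-constant cross-sectional cut-off $f_n$, the same Bishop--Gromov estimate $\m_X\big(B^X_{r+1/n}(x)\setminus B^X_r(x)\big)=O(1/n)$ on the annular shell, and the same $O(1)$ bound on $\int_{B_R(p)}\mathrm{lip}(f_n)\,\d\m_C$. In fact you are more careful than the paper on two points it glosses over: first, the warped-product scaling gives $\mathrm{lip}(f_n)(t,y)\sim n/t$ rather than the paper's claimed value $n$ (harmless for the conclusion, since $t^{N-2}$ is still locally integrable in the radial variable for $N\ge 2$); second, $f_n$ is genuinely \emph{not} in $\mathrm{LIP}_{\mathrm{loc}}(C(X))$ because of the collapse at the tip, so your radial cut-off $\psi_m$ with the estimate $m\cdot\m_C(B_{2/m}(O))=O(m^{1-N})\to 0$ is a correct and necessary repair of an oversight in the paper's own argument.
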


Let us recall some useful notions of convergence, in order to study blow-ups of sets of (locally) finite perimeter. We refer to \cite{Gigli_2015, AmbrosioHonda1, 1BakryEmeryAmbrosio} for more details.

\begin{definition}[$L^1$ strong convergence of sets]\label{definition | L^1 convergence of sets}
Let $\{(X_i,\sfd_i,\m_i,x_i)\}_i$ be a sequence of pointed metric measure spaces converging in the pointed measured Gromov Hausdorff sense to $(Y,\rho,\mu, y)$. Let $(Z,\sfd_Z)$ be the ambient space realizing the convergence. Moreover, let $E_i\subset X_i$ be a sequence of Borel sets with $\m_i(E_i)<\infty$ for every $i\in\N$. We say that $\{E_i\}_i$ converges to a Borel set $F\subset Y$ in the strong $L^1$ sense if the measures $\chi_{E_i}\m_i\weakto\chi_F\mu$ in duality with $\C_\mathrm{bs}(Z)$ and $\m_i(E_i)\to \mu(F)$. 

We also say that the convergence of $E_i$ is strong in $L^1_\mathrm{loc}$ if $E_i\cap B_r(x_i)$ converges in the strong $L^1$ sense to $F\cap B_r(y)$ for all $r>0$.
\end{definition}

Such a convergence can be metrized, by a distance $\mathcal{D}$ defined on (isomorphsim classes of) quintuples $(X,\sfd,\m,x,E)$, see \cite[Lemma A.4]{1BakryEmeryAmbrosio}.

We use that notion of convergence to define the tangent to a set of locally finite perimeter contained in an $\mathrm{RCD}(K,N)$ metric measure space. Before, let us recall that given an $\mathrm{RCD}(K,N)$ and a point $x\in X$, by Gromov pre-compactness theorem there is a not-empty set $\mathrm{Tan}_x(X)$ of tangent spaces at $x$ obtained by considering the pmGH limits of blow-up rescalings of $X$ centered at $x$; moreover, for $\mm$-a.e. $x\in X$, the tangent space is unique and Euclidean \cite{GMR15, MondinoStructureTheory, ConstancyDimensionBrueSemola}.
\begin{definition}[Tangent to a set of locally finite perimeter]\label{tangent to a set of locally finite perimeter}\label{definition | Tangent to a set of locally finite perimeter}
Let $(X,\sfd,\m)$ be an $\mathrm{RCD}(K,N)$ m.m.s. and $E\subset X$ be a set of locally finite perimeter.\\ 
We say that $(Y, \rho, \mu, y, F)\in \mathrm{Tan}_x(X,\sfd,\m,E)$ if $(Y,\rho,\mu,y) \in \mathrm{Tan}_x(X)$ and $F\subset Y$ is a set of locally finite perimeter of positive measure such that $\chi_E$ converges in the $L^1_\mathrm{loc}$ sense of Definition \ref{definition | L^1 convergence of sets} to $F$ along the blow up sequence associated to the tangent $Y$.
\end{definition}

An important tool for our analysis is the Gauss-Green formula for sets of finite perimeter in the $\mathrm{RCD}$ setting. We refer to \cite{SemolaGaussGreen} for the proof and for background material; here let us briefly mention that, given a set of finite perimeter $E\subset X$, it is possible to define the space of $L^2$-vector fields with respect to the perimeter measure, denoted by $L^2_E(TX)$.  

\begin{theorem}[Gauss-Green formula]\label{thm:NuEGG}
Let $(X,\sfd,\m)$ be an $\mathrm{RCD}(K,N)$ m.m.s. and $E\subset X$ a set of finite perimeter with $\m(E) < \infty$. Then there exists a unique vector field $\nu_E \in L^2_E(TX)$ such that $|\nu_E| =1$ $|D\chi_E|$-almost everywhere and 
\begin{equation*}
    \int_E \mathrm{div}(v) \d\m = - \int \mathrm{tr}_E (v) \cdot \nu_E\, \dPer(E),
\end{equation*}
for all $v \in W^{1,2}_C(TX) \cap D(\mathrm{div})$ with $|v|\in L^\infty(|D\chi_E|)$.
\end{theorem}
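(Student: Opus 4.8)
The plan is to obtain $\nu_E$ as the element representing, in the Hilbert module $L^2_E(TX)$, the bounded linear functional $v\mapsto -\int_E\div(v)\,\d\m$, after showing that this functional depends on $v$ only through the trace $\tr_E(v)$, in an $L^\infty(\Per(E))$-linear and $L^2(\Per(E))$-continuous way. First I would reduce to test vector fields: since test vector fields are dense in $W^{1,2}_C(TX)\cap D(\div)$ and both sides of the asserted identity are continuous along the corresponding approximation — the left-hand side because $\m(E)<\infty$ and $\div(v)\in L^2(\m)$, the right-hand side because $\tr_E$ is continuous with values in $L^2_E(TX)$ and $\Per(E)(X)<\infty$ — it suffices to construct $\nu_E$ and verify the formula for $v$ a test vector field, then pass to the limit. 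For such a $v$, using the very definition of the total variation measure I would take locally Lipschitz functions $u_j\to\chi_E$ in $L^1_{\mathrm{loc}}(X)$ with $0\le u_j\le 1$ and $\mathrm{lip}(u_j)\,\m\weakto|D\chi_E|=\Per(E)$, integrate by parts via the definition of the divergence,
\[
-\int u_j\,\div(v)\,\d\m=\int\nabla u_j\cdot v\,\d\m ,
\]
and let $j\to\infty$: the left-hand side converges to $-\int_E\div(v)\,\d\m$, while on the right the $TX$-valued measures $\nabla u_j\,\m$ converge weakly-$*$ to the differential $D\chi_E$ of $\chi_E$. Writing the latter, via the structure theory of sets of finite perimeter in $\RCD$ spaces, in the polar form $\nu_E\,\Per(E)$, and making sense of the pairing of the (only $\m$-a.e.\ defined) field $v$ against the possibly $\m$-singular measure $\Per(E)$ precisely through $\tr_E(v)$ — which is where the construction of the trace operator from \cite{SemolaGaussGreen} is essential — yields
\[
\int_E\div(v)\,\d\m=-\int\tr_E(v)\cdot\nu_E\,\dPer(E)\qquad\text{for all test vector fields }v .
\]

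Granting this factorization, the estimate $\big|\int_E\div(v)\,\d\m\big|\le\int|\tr_E(v)|\,\dPer(E)$, which is the metric counterpart of $\big|\int_{\partial E}v\cdot\nu\big|\le\int_{\partial E}|v|$, shows that $\tr_E(v)\mapsto -\int_E\div(v)\,\d\m$ is a bounded $L^\infty(\Per(E))$-linear functional on the submodule of $L^2_E(TX)$ generated by the traces of test vector fields. Since, by construction of $L^2_E(TX)$, that submodule is dense, the Riesz representation theorem for $L^2$-normed modules furnishes a \emph{unique} $\nu_E\in L^2_E(TX)$ satisfying the displayed identity for all admissible $v$; uniqueness of $\nu_E$ is immediate from the same density, and the operator-norm bound localizes to give $|\nu_E|\le 1$ $\Per(E)$-a.e.

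The remaining point — which I expect to be the main obstacle, together with the construction and the fine properties of $\tr_E$ used above — is the reverse inequality $|\nu_E|\ge 1$ $\Per(E)$-a.e. Here I would use the dual description of the perimeter valid in the present setting: for every Borel $A\subset X$,
\[
\Per(E;A)=\sup\Big\{-\int_E\div(v)\,\d\m\ :\ v\in W^{1,2}_C(TX)\cap D(\div),\ |v|\le 1,\ |v|\in L^\infty(\Per(E)),\ \mathrm{supp}\,v\Subset A\Big\} ,
\]
which, combined with the Gauss--Green identity just established, gives $\Per(E;A)\le\int_A|\nu_E|\,\dPer(E)$ for every $A$, hence $|\nu_E|\ge 1$ and therefore $|\nu_E|=1$ $\Per(E)$-a.e. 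The delicate ingredients are thus: (i) establishing this supremum formula in the required generality, with competitors taken in the class $W^{1,2}_C(TX)\cap D(\div)$ with bounded trace so that the identity applies to them; and (ii) the analysis underpinning the trace operator — the $\Per(E)$-a.e.\ convergence of suitable averages of a Sobolev vector field, and the fact that the resulting traces generate $L^2_E(TX)$ — which ultimately relies on the rectifiability and blow-up structure of the reduced boundary of sets of finite perimeter in $\RCD$ spaces.
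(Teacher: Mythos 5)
The paper does not actually prove Theorem \ref{thm:NuEGG}: it is recalled as a preliminary, with an explicit pointer to \cite{SemolaGaussGreen} for the proof and the construction of $\tr_E$ and $L^2_E(TX)$. So there is no internal argument to compare with; the relevant comparison is with the cited literature, whose overall architecture (trace of Sobolev vector fields on the boundary, the module $L^2_E(TX)$, a duality argument producing $\nu_E$, and a separate argument for $|\nu_E|=1$) your outline does broadly follow.

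There are, however, two genuine gaps. First, your derivation of the identity for test vector fields is circular as written: you pass to the limit in $\int \nabla u_j\cdot v\,\d\m$ by asserting that the ``$TX$-valued measures'' $\nabla u_j\,\m$ converge weakly-$*$ to $D\chi_E$ and then invoking its polar decomposition $D\chi_E=\nu_E\,\Per(E)$. In the $\RCD$ setting there is no a priori notion of tangent-module-valued measure in which such a limit can be taken, and the existence of a polar vector $\nu_E$ that can be paired against traces is essentially the content of the theorem itself (the paper quotes the analogous BV polar decomposition from \cite{BrenaGigli} as a separate nontrivial result). If you already grant that decomposition, the Riesz-representation step of your second paragraph is redundant; if you do not, the first paragraph delivers neither the identity for test fields nor the key bound $\big|\int_E\div (v)\,\d\m\big|\le\int|\tr_E(v)|\,\dPer(E)$, which must be established directly — in the literature this is where the real work lies, via quasi-continuous representatives of Sobolev (vector) fields and the absolute continuity of $\Per(E)$ with respect to capacity, which is also what makes $\tr_E$ and the pairing against the $\m$-singular measure $\Per(E)$ meaningful. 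Second, the norm-one step: the dual sup representation of $\Per(E;A)$ with competitors in $W^{1,2}_C(TX)\cap D(\div)$, $|v|\le 1$, supported in $A$, is not an off-the-shelf result in this generality — producing sufficiently many admissible vector fields is precisely the difficulty — and in the cited works the equality $|\nu_E|=1$ $\Per(E)$-a.e.\ is obtained instead through the blow-up/rectifiability analysis of the reduced boundary (at $\Per$-a.e.\ point the tangent configuration is a Euclidean half-space), not through such a duality formula; one also needs an asymptotic-doubling/differentiation argument to localize any integral inequality to an a.e.\ pointwise one. You flag both points as ``delicate ingredients'', but they are the mathematical core of the statement rather than technical residue, so the proposal as it stands is an outline of the known strategy rather than a proof.
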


We next recall a useful cut and paste result proved in \cite[Theorem 4.11]{SemolaCutAndPaste}. Do this aim, let us first fix some notation. We denote by $\mathcal{H}^h$ the codimension one Hausdorff type measure induced by $\m$ with gauge function $h(B_r(x)):=\m(B_r(x))/r$, see \cite{SemolaGaussGreen} for further details. We write the total variation measure $D\chi_E$ using the polar factorization  $D\chi_E = \nu_E |D\chi_E|$.  Lastly, we denote the  \emph{measure theoretic interior} $E^{(1)}$ (resp. the  \emph{measure theoretic exterior} $E^{(0)}$) of a Borel subset $E\subset X$:
\begin{align*}
E^{(1)} &:= \left\{x \in X: \lim_{r \to 0} \frac{\mm(E\cap B_r(x))}{\mm(B_r(x))} = 1\right\}, \\
E^{(0)} &:= \left\{x \in X: \lim_{r \to 0} \frac{\mm(E\cap B_r(x))}{\mm(B_r(x))} = 0\right\}.
\end{align*}
\begin{theorem}[Cut and paste]\label{Thm:CutAndPastePer}
Let $(X,\sfd,\m)$ be an $\mathrm{RCD}(K,N)$ m.m.s. and $E$, $F \subset X$ be sets of finite perimeter. Then $E\cap F$, $E\cup F$ and $E\setminus F$ are sets of finite perimeter. Moreover, there holds
\begin{align*}
    D \chi_{E\cap F} &= D\chi_E |_{F^{(1)}} + D\chi_F |_{E^{(1)}} + \nu_E \mathcal{H}^{h}|_{\{\nu_E=\nu_F\}}\, ;\\
     D\chi_{E\cup F} &= D\chi_E |_{F^{(0)}} + D\chi_F |_{E^{(0)}} + \nu_E \mathcal{H}^{h}|_{\{\nu_E=\nu_F\}}\, ;\\
     D\chi_{E\setminus F} &= D\chi_E |_{F^{(0)}} - D\chi_F |_{E^{(1)}} + \nu_E \mathcal{H}^{h}|_{\{\nu_E=-\nu_F\}}\, .
\end{align*}
\end{theorem}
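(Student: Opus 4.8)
The plan is to follow the Euclidean blueprint (see \cite{maggi_2012}), reducing everything to the analysis of indicator functions and their total variations, and using as a black box the Gauss--Green structure together with the fine properties of BV functions in the RCD setting established in \cite{SemolaGaussGreen,1BakryEmeryAmbrosio}. First I would reduce the three identities to a single one: since $E\cup F = X\setminus\big((X\setminus E)\cap(X\setminus F)\big)$ and $E\setminus F = E\cap(X\setminus F)$, and since $D\chi_{X\setminus G} = -D\chi_G$ while the reduced boundaries satisfy $\nu_{X\setminus G}=-\nu_G$ and $(X\setminus G)^{(i)}=G^{(1-i)}$, the formulas for $E\cup F$ and $E\setminus F$ follow from the one for $E\cap F$ by purely set-theoretic bookkeeping. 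So the core is: show $E\cap F$ has finite perimeter and $D\chi_{E\cap F} = D\chi_E\llcorner F^{(1)} + D\chi_F\llcorner E^{(1)} + \nu_E\,\mathcal H^h\llcorner\{\nu_E=\nu_F\}$.

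The key steps for the core identity are as follows. (1) \emph{Finite perimeter of $E\cap F$.} Use the sub-modularity/strict BV inequality $\Per(E\cap F) + \Per(E\cup F)\le \Per(E)+\Per(F)$, which itself follows from $\chi_{E\cap F}=\min(\chi_E,\chi_F)$, $\chi_{E\cup F}=\max(\chi_E,\chi_F)$ and the pointwise inequality $\mathrm{lip}(\min(u,v))+\mathrm{lip}(\max(u,v))\le\mathrm{lip}(u)+\mathrm{lip}(v)$ applied to locally Lipschitz approximants, passing to the limit via \eqref{definition of total variation measure}. (2) \emph{Localization of the perimeter measure.} By the structure theory for sets of finite perimeter in RCD spaces \cite{1BakryEmeryAmbrosio}, $\Per(E) = \mathcal H^h\llcorner\mathcal F E$ is concentrated on the essential boundary, and $\m$-a.e.\ point of $X$ is in $E^{(0)}\cup E^{(1)}$, with $\mathcal H^h$-a.e.\ point in $E^{(0)}\cup E^{(1)}\cup\mathcal F E$; the same for $F$. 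This partitions the essential boundary of $E\cap F$ (up to $\mathcal H^h$-null sets) into the pieces $\mathcal F E\cap F^{(1)}$, $F^{(1)}$ replaced by $\mathcal F F\cap E^{(1)}$, and $\mathcal F E\cap\mathcal F F$, on the last of which one further splits according to whether $\nu_E=\nu_F$ or $\nu_E=-\nu_F$ (the case $\nu_E\perp\nu_F$ or generic is $\mathcal H^h$-negligible by a blow-up argument: at $\mathcal H^h$-a.e.\ point of $\mathcal F E\cap\mathcal F F$ the blow-up of both $E$ and $F$ is a half-space, and if the two half-spaces are distinct then $E\cap F$ blows up to a wedge of measure density $<1/2$, which is incompatible with that point lying in $\mathcal F(E\cap F)$, while if they are opposite $E\cap F$ blows up to measure-zero, i.e.\ the point is in $(E\cap F)^{(0)}$). (3) \emph{Identification of the vector-valued measure and the coefficient $\nu_E$.} On $\mathcal F E\cap F^{(1)}$, for a test vector field $v$ one writes $\int_{E\cap F}\mathrm{div}\,v\,\d\m$ and compares with $\int_E\mathrm{div}\,v\,\d\m$ using that $\m((E\cap F)\triangle E)$ is locally controlled and, more precisely, by a blow-up/density argument the tangent structure at $\mathcal H^h$-a.e.\ such point agrees with that of $E$, forcing $D\chi_{E\cap F}$ to coincide with $D\chi_E$ there; symmetrically on $\mathcal F F\cap E^{(1)}$. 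On $\{\nu_E=\nu_F\}\subset\mathcal F E\cap\mathcal F F$ the blow-up of $E\cap F$ is the common half-space, so $\nu_{E\cap F}=\nu_E$ and the density with respect to $\mathcal H^h$ is $1$, yielding the term $\nu_E\,\mathcal H^h\llcorner\{\nu_E=\nu_F\}$. Finally one checks the three contributions are mutually singular, so they add up to the full $D\chi_{E\cap F}$, and matches total masses using step (1) to close.

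\textbf{Main obstacle.} The technical heart is step (2)--(3): establishing that at $\mathcal H^h$-a.e.\ point the pair $(E,F)$ admits a \emph{simultaneous} blow-up with both sets converging to half-spaces whose normals are well defined, and that the possible mutual configurations (equal, opposite, transverse) each force the claimed local behavior of $E\cap F$. In the Euclidean setting this rests on Federer's theorem and the rectifiability of the reduced boundary; in the RCD setting one must instead invoke the rectifiability of the essential boundary and the $\mathcal H^h$-a.e.\ existence and uniqueness of tangents to sets of finite perimeter from \cite{1BakryEmeryAmbrosio,SemolaGaussGreen}, together with a joint-tangent extraction (a diagonal argument picking a common blow-up sequence for $X$, $E$ and $F$ at a point that is simultaneously a regular point for both perimeter measures and for $\m$). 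Controlling that the transverse/opposite cases are $\mathcal H^h$-negligible on $\mathcal F(E\cap F)$, rather than merely $\m$-negligible, is where the low regularity of the ambient space makes the argument genuinely more delicate than its classical counterpart. Since the full details are carried out in \cite[Theorem 4.11]{SemolaCutAndPaste}, I would structure the write-up around these blow-up lemmas and refer there for the measure-theoretic fine points.
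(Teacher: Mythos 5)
The paper itself offers no proof of this statement: Theorem \ref{Thm:CutAndPastePer} is recalled from \cite[Theorem 4.11]{SemolaCutAndPaste}, so your closing move of deferring the measure-theoretic fine points to that reference is exactly what the authors do, and your sketch (reduction of the union/difference formulas to the intersection one by complementation, finite perimeter via submodularity, and then a Federer-type theorem plus rectifiability of the essential boundary with simultaneous blow-ups to identify the three mutually singular pieces) is the standard Euclidean-style route, consistent with how the cited work proceeds. Your complementation bookkeeping checks out.

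Two concrete corrections if you want the sketch to stand on its own. First, the pointwise inequality $\mathrm{lip}(u\wedge v)+\mathrm{lip}(u\vee v)\le \mathrm{lip}(u)+\mathrm{lip}(v)$ invoked in step (1) is false: on $\R$ take $u(t)=t$ and $v(t)=2t$; then $u\wedge v$ equals $2t$ for $t<0$ and $t$ for $t>0$ and $u\vee v$ the other way around, so $\mathrm{lip}(u\wedge v)(0)=\mathrm{lip}(u\vee v)(0)=2$, giving $4>3=\mathrm{lip}(u)(0)+\mathrm{lip}(v)(0)$. The submodularity $|D(u\vee v)|+|D(u\wedge v)|\le |Du|+|Dv|$, hence $\Per(E\cap F)+\Per(E\cup F)\le \Per(E)+\Per(F)$, is nevertheless true for $\BV$ functions on PI/$\RCD$ spaces, but it is proved by a different (more careful approximation) argument in the metric $\BV$ literature, so it should be quoted rather than derived from the pointwise bound. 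Second, in step (2) the $\mathcal{H}^h$-negligibility of the transverse set $\{\nu_E\neq\pm\nu_F\}\subset\partial^*E\cap\partial^*F$ does not follow from the wedge blow-up alone: a wedge point is excluded from the reduced boundary of $E\cap F$, but it still lies in its essential boundary, so to conclude negligibility you must additionally apply the Federer-type theorem to $E\cap F$ itself ($\mathcal{H}^h$-almost all of its essential boundary consists of points with half-space blow-ups), which is available in the $\RCD$ setting through the references you list but should be made explicit for the set $E\cap F$, not only for $E$ and $F$.
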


We now recall the notion of  perimeter minimizing sets. To avoid  discussing trivial cases, we will always assume $\mm(E)>0$ and $\mm(X\setminus E)\neq 0$.

\begin{definition}[Local and Global Perimeter Minimizer]\label{def:LocPerMin}
A set of locally finite perimeter $E \subset X$, with $\mm(E)>0$ and  $\mm(X\setminus E)>0$,  is a
\begin{itemize}
\item  \emph{Global perimeter minimizer} if  it minimizes the perimeter for every compactly supported perturbation, i.e.
$$
     \Per(E; B_R(x)) \leq \Per(F; B_R(x))
$$
for all $x \in X$, $R>0$ and $F \subset X$ with $F=E$ outside $B_R(x)$;
    \item  \emph{Local perimeter minimizer} if  for every $x\in X$ there exists $r_x>0$ such that $E$ minimizes the perimeter in $B_{r_x}(x)$, i.e. for all $F \subset X$ with $F=E$ outside $B_{r_x}(x)$ it holds
$$
     \Per(E; B_{r_x}(x)) \leq \Per(F; B_{r_x}(x))\, .
$$
\end{itemize}
\end{definition}

For a proof of the following density result see \cite{KKLSDensity}.

\begin{lemma}[Density]\label{lemma | density of perimeter minimizers}
Let $(X,\d,\m)$ be an $\RCD(K,N)$ m.m.s. and let $E \subset X$ be a local perimeter minimizer set and let $x\in \partial E$. Then there exists constants $r_0$, $C >0$ such that
\begin{equation*}
    C^{-1} \frac{\m(E \cap B_r(x))}{r} \leq \mathrm{Per}(E; B_r(x)) \leq C \frac{\m(E \cap B_r(x))}{r}\, ,
\end{equation*}
for all $0<r<r_0$.
\end{lemma}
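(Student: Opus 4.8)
The plan is to use the classical comparison argument, competing $E$ against the sets $E\setminus B_r(x)$ and $E\cup B_r(x)$, combined with the coarea formula and the isoperimetric-type information coming from the $\RCD(K,N)$ structure. First I would fix $x\in\partial E$ and set $g(r):=\m(E\cap B_r(x))$; since $E$ has locally finite perimeter and $x\in\partial E$, one has $g(r)>0$ and $\m(B_r(x)\setminus E)>0$ for all small $r$. By the coarea formula applied to the $1$-Lipschitz function $y\mapsto \sfd(x,y)$, the function $g$ is locally absolutely continuous and $g'(r)=\mathcal{H}^h(\partial B_r(x)\cap E^{(1)})$ for a.e.\ $r$, and moreover $\Per(E\setminus B_r(x); B_{r_0}(x)) \le \Per(E;B_r(x)) + g'(r)$ for a.e.\ $r$ (using the cut-and-paste formula of Theorem \ref{Thm:CutAndPastePer} to control the perimeter of the truncated set, the new boundary being a portion of the distance sphere).

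For the \emph{upper bound}, I would compare $E$ with $F:=E\setminus B_r(x)$ inside $B_{r_0}(x)$ (legitimate for $r<r_0\le r_x$), obtaining $\Per(E;B_r(x))\le \Per(F;B_r(x)) \le g'(r)$ for a.e.\ $r$; then one needs the reverse relation between $\Per(E;B_r(x))$ and $g(r)$. This comes from a relative isoperimetric / Poincaré-type inequality on $\RCD(K,N)$ balls: $\min\{\m(E\cap B_r(x)),\m(B_r(x)\setminus E)\}\le C r\,\Per(E;B_r(x))$; since $x\in\partial E$ and $E$ is a perimeter minimizer one can (via the density and a contradiction/iteration argument, or directly) ensure that for small $r$ the minimum is realized by $\m(E\cap B_r(x))$, giving $g(r)\le Cr\,\Per(E;B_r(x))$. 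Combining, $g(r)\le C r g'(r)$, which integrates to a lower bound $g(r)\ge c r^{N}$ — but more directly, feeding $\Per(E;B_r(x))\le g'(r)$ back one gets $\Per(E;B_r(x))\le C g(r)/r$ after an elementary ODE comparison (this is the upper density bound).

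For the \emph{lower bound}, I would instead compare $E$ with $E\cup B_r(x)$, which by minimality gives $\Per(E;B_r(x))\le \Per(E\cup B_r(x);B_r(x))\le \Per(B_r(x);B_r(x))$-type control, or more efficiently use that the minimality forces $\Per(E;B_r(x)) \le \mathcal{H}^h(\partial B_r(x)\setminus E^{(1)})=:h(r)$ where now $g(r)+(\text{something})$; together with $\Per(E;B_r(x))\ge \Per(E\cap B_r(x);X) - h(r)$ and the isoperimetric inequality $\m(E\cap B_r(x))\le C\,\Per(E\cap B_r(x);X)^{N/(N-1)}$ (valid locally on $\RCD(K,N)$ from Bishop–Gromov), one obtains a differential inequality of the form $g(r)^{(N-1)/N}\le C g'(r)$, whose integration yields $g(r)\ge c r^N$; reinserting this gives $\Per(E;B_r(x))\ge c' g(r)/r$. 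The main obstacle I expect is making the relative isoperimetric inequality and the cut-and-paste perimeter bookkeeping on distance balls fully rigorous in the low-regularity $\RCD$ setting — in particular, controlling the "spherical" part $\mathcal{H}^h(\partial B_r(x)\cap E^{(1)})$ by $g'(r)$ for a.e.\ $r$ requires the coarea formula together with the fact that $\partial B_r(x)$ has finite codimension-one measure for a.e.\ $r$, and then selecting a good sequence of radii along which all these a.e.\ statements hold simultaneously; this is exactly where one invokes \cite{KKLSDensity} rather than redoing it.
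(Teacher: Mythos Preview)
The paper does not actually prove this lemma: it simply records the statement and refers the reader to \cite{KKLSDensity}. Your plan --- competing $E$ against $E\setminus B_r(x)$ and $E\cup B_r(x)$, converting minimality via the coarea formula into a differential inequality for $g(r)=\m(E\cap B_r(x))$, and closing with the local relative isoperimetric/Poincar\'e inequality available in doubling Poincar\'e (hence $\RCD(K,N)$) spaces --- is exactly the argument of that reference, and is the standard one going back to the Euclidean case. As you yourself note in the last sentence, you are essentially reconstructing \cite{KKLSDensity}.

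One organisational remark: in your ``upper bound'' paragraph the inequality $g(r)\le Cr\,\Per(E;B_r(x))$ obtained from the relative isoperimetric inequality is already the \emph{lower} bound $\Per(E;B_r(x))\ge C^{-1}g(r)/r$, not the upper one. The genuine upper bound $\Per(E;B_r(x))\le Cg(r)/r$ comes directly from $\Per(E;B_s(x))\le g'(s)$ (for a.e.\ $s$, via comparison with $E\setminus B_s(x)$) after integrating in $s$ over $(r/2,r)$ and invoking the doubling property of $\m$; no isoperimetric input is needed there. Also, the line ``$\Per(E;B_r(x))\le \Per(F;B_r(x))$'' with $F=E\setminus B_r(x)$ is not literally correct since $F$ is empty inside the open ball $B_r(x)$; the comparison should be made inside a slightly larger ball $B_{r'}(x)$ and then decomposed. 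These are bookkeeping issues rather than gaps --- the strategy is sound and matches the cited source.
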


We report here a few results on $\mathrm{BV}$ functions and their associated vectorial variational measures, for their proof and background on notation see  \cite{BrenaGigli}.
\begin{proposition}
Let $(X, \sfd, \m)$ be an $\mathrm{RCD}(K,\infty)$ space and $f \in \BV(X)$. Then there exists a unique vector field $\nu_{f}\in L_{|Df|}(TX)$ such that 
\begin{equation}
    \label{vector field BV function}
    \int_X f\, \div v \, \d\m = - \int_X v \cdot \nu_f \, \d|Df|\, , \quad \text{for all $v \in QC^\infty(TX) \cap D(\div)$}.
\end{equation}

\end{proposition}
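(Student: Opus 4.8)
The statement is the $\RCD$ incarnation of the classical polar decomposition $Df=\nu_f\,|Df|$ of the vector measure $Df$, and I would prove it by a Riesz-representation argument over the normed module $L_{|Df|}(TX)$. The plan has four steps: (i) the a priori bound $\bigl|\int_X f\,\div v\,\d\m\bigr|\le \int_X |v|\,\d|Df|$ for every admissible $v$; (ii) the deduction that $v\mapsto -\int_X f\,\div v\,\d\m$ is a bounded functional on the submodule generated by admissible vector fields inside $L^1_{|Df|}(TX)$; (iii) its representation, via module duality, by an element $\nu_f$ of the dual module; and (iv) uniqueness, from density of admissible vector fields.

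For step (i) I would pick $f_j\in \mathrm{LIP}_{\mathrm{loc}}(X)$ with $f_j\to f$ in $L^1_{\mathrm{loc}}$ and, using the Miranda-type representation of the relaxed total variation (cf.\ \cite{mir00}), chosen so that moreover $\mathrm{lip}(f_j)\,\m\weakto |Df|$ weakly-$*$ (any weak-$*$ limit of a recovery sequence dominates $|Df|$ by inner regularity and lower semicontinuity, and the total masses agree, forcing equality). For $v\in QC^\infty(TX)\cap D(\div)$, integration by parts for Lipschitz functions gives $\int_X f_j\,\div v\,\d\m=-\int_X \nabla f_j\cdot v\,\d\m$, hence $\bigl|\int_X f_j\,\div v\,\d\m\bigr|\le\int_X |v|\,\mathrm{lip}(f_j)\,\d\m$. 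Passing to the limit, the left-hand side tends to $\int_X f\,\div v\,\d\m$ (since $f_j\to f$ in $L^1_{\mathrm{loc}}$ and $\div v$ is well behaved after truncating to the support of $v$), while the right-hand side tends to $\int_X |v|\,\d|Df|$ by the weak-$*$ convergence — and it is exactly here that the $QC$ regularity of $v$ is needed, to furnish a (quasi-)continuous representative of $|v|$ integrable against $|Df|$.

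For steps (ii)–(iii), the bound of step (i) shows that $L(v):=-\int_X f\,\div v\,\d\m$ factors through the image of $QC^\infty(TX)\cap D(\div)$ in $L^1_{|Df|}(TX)$ and is $1$-Lipschitz there for the $L^1_{|Df|}$-norm; I would extend it by continuity to the generated submodule and invoke the identification of the dual of an $L^1$-normed module — Gigli's theory of normed modules, here carried out with respect to the (possibly $\m$-singular) measure $|Df|$, as in \cite{BrenaGigli} — to obtain a unique $\nu_f$ in the dual module, with $|\nu_f|\le 1$ $|Df|$-a.e., such that $L(v)=\int_X v\cdot\nu_f\,\d|Df|$ for all admissible $v$. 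This is the asserted identity. Uniqueness (step (iv)) follows at once: if $\nu_f^{1},\nu_f^{2}$ both work, then $\int_X v\cdot(\nu_f^{1}-\nu_f^{2})\,\d|Df|=0$ for every $v\in QC^\infty(TX)\cap D(\div)$, and since such vector fields generate $L^1_{|Df|}(TX)$ as a module, $\nu_f^{1}=\nu_f^{2}$ $|Df|$-a.e.

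The hard part is the interaction between the $\m$-based first-order calculus — the natural home of $v$, $\div v$ and $\nabla f_j$ — and the measure $|Df|$, which is in general singular with respect to $\m$ and concentrated on a codimension-one set: one must assign pointwise values to $v$ and to $|v|$ on that set, and one must know that the admissible test vector fields are rich enough to be dense in $L^1_{|Df|}(TX)$. This is what forces working with quasi-continuous representatives of Sobolev vector fields and with capacitary fine theory, which is the technical backbone of \cite{BrenaGigli}; no purely soft functional-analytic argument will deliver it.
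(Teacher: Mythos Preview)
The paper does not give its own proof of this proposition: it is stated as a preliminary result with the reference ``for their proof and background on notation see \cite{BrenaGigli}''. So there is no in-paper proof to compare against.

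Your sketch is the correct strategy, and it is essentially the one carried out in \cite{BrenaGigli}: an a priori bound via Lipschitz approximation, followed by a Riesz/module-duality representation over $L_{|Df|}(TX)$, with uniqueness from density. You have also correctly identified the genuine difficulty --- making sense of the trace of $v$ and $|v|$ against the possibly $\m$-singular measure $|Df|$ via quasi-continuous representatives and capacitary theory, and the density of the test class in $L^1_{|Df|}(TX)$ --- which is precisely the technical content of that reference.

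Two small points worth tightening if you write this out in full. First, in step (i) the passage to the limit on the left-hand side needs $\int_X f_j\,\div v\,\d\m\to\int_X f\,\div v\,\d\m$, but a priori $f_j\to f$ only in $L^1$ while $\div v\in L^2$; one handles this either by assuming (or reducing to) $\div v\in L^\infty$ via a further approximation, or by exploiting boundedness of the support. Second, in step (iii) you should name the target module precisely: module duality gives $\nu_f\in L^\infty_{|Df|}(TX)$ with $|\nu_f|\le 1$, which is consistent with the statement (and one can then upgrade to $|\nu_f|=1$ $|Df|$-a.e.\ by saturating the inequality of step (i), though the proposition as stated does not ask for this).
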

In what follows, for $f \in \BV(X)$ we will denote $Df := \nu_f |Df|$. 
If $E$ is a set of locally finite perimeter, we denote $\nu_E := \nu_{\chi_E}$. This definition of unit normal is consistent with the one introduced above via the Gauss-Green formula, see \cite{SemolaGaussGreen}. For a function $f:X\to\R$, define
\begin{align*}
    f^\wedge = \mathrm{ap} \ \liminf_{y\to x} f(y) = \sup \left\{ t \in \overline{\R}: \lim_{r \downarrow 0} \frac{\m(B_r(x) \cap \{f<t\})}{\m(B_r(x))}=0 \right\} \\
    f^\vee = \mathrm{ap} \ \limsup_{y\to x} f(y) = \inf \left\{ t \in \overline{\R}: \lim_{r \downarrow 0} \frac{\m(B_r(x) \cap \{f>t\})}{\m(B_r(x))}=0 \right\}\, ,
\end{align*}
and, lastly,
\begin{equation}\label{eq:defbarf}
\Bar{f} = \frac{f^\wedge+f^\vee}{2}\, ,
\end{equation}
with the convention $\infty -\infty = 0$.
\begin{lemma}[Leibniz rule for BV]
Let $(X, \sfd, \m)$ be an $\mathrm{RCD}(K,\infty)$ space and $f, g \in \BV (X) \cap L^\infty(X)$. Then $fg \in \BV(X)$ and 
\begin{equation}
    \label{Leibniz Rule BV}
    D(fg) = \Bar{f} Dg + \Bar{g} Df\, .
\end{equation}
In particular, $|D(fg)| \leq |\Bar{f}| |Dg| + |\Bar{g}| |Df|$.
\end{lemma}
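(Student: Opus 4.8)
The plan is to derive the formula by applying the cut‑and‑paste identity of Theorem~\ref{Thm:CutAndPastePer} to the superlevel sets of $f$ and $g$ and then integrating over the level parameters, using the coarea formula throughout. As a first reduction I would pass to positive and negative parts: writing $f=f_+-f_-$, $g=g_+-g_-$ with $f_\pm,g_\pm\in\BV(X)\cap L^\infty(X)$ nonnegative (truncation does not increase the total variation and $0\le f_\pm\le|f|\in L^1(X)$), linearity of $D$ gives $Df=Df_+-Df_-$, while inspecting the one‑sided values $f^\wedge,f^\vee$ in terms of those of $f_\pm$ yields $\bar f=\overline{f_+}-\overline{f_-}$. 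Hence, once the identity is proved for a product of two \emph{nonnegative} functions in $\BV\cap L^\infty$, expanding $fg=f_+g_+-f_+g_--f_-g_++f_-g_-$ and using bilinearity recovers the general statement (and $fg\in\BV(X)$). So from now on assume $f,g\ge0$.

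By the layer‑cake formula $fg=\int_0^\infty\!\int_0^\infty\chi_{\{f>s\}\cap\{g>t\}}\,\d s\,\d t$; by the coarea formula $\{f>s\}$ and $\{g>t\}$ have finite perimeter for a.e.\ $s,t$, and since $f,g$ are bounded these sets have zero perimeter for $s>\|f\|_\infty$, resp.\ $t>\|g\|_\infty$. Using Theorem~\ref{Thm:CutAndPastePer} one gets $\int_0^\infty\!\int_0^\infty\Per(\{f>s\}\cap\{g>t\})\,\d s\,\d t\le 2\|g\|_\infty|Df|(X)+2\|f\|_\infty|Dg|(X)<\infty$, so by lower semicontinuity of the total variation (applied to the truncated double integrals) $fg\in\BV(X)$. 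To identify $D(fg)$ I test against $v\in QC^\infty(TX)\cap D(\div)$ as in~\eqref{vector field BV function}, pull $\int_0^\infty\!\int_0^\infty$ outside by Fubini (legitimate by the bound above), apply the Gauss–Green formula to each $\{f>s\}\cap\{g>t\}$, and insert
\[
D\chi_{\{f>s\}\cap\{g>t\}}=D\chi_{\{f>s\}}|_{\{g>t\}^{(1)}}+D\chi_{\{g>t\}}|_{\{f>s\}^{(1)}}+\nu_{\{f>s\}}\,\mathcal H^{h}|_{\{\nu_{\{f>s\}}=\nu_{\{g>t\}}\}}\, .
\]
The first two summands are handled by the elementary identity $\int_0^\infty\chi_{\{g>t\}^{(1)}}(x)\,\d t=g^\wedge(x)$ (valid for every $x$, since $\chi_{\{g>t\}^{(1)}}(x)=\chi_{(0,g^\wedge(x))}(t)$ for all but at most one $t$): combined with the vectorial coarea identity $\int_0^\infty D\chi_{\{f>s\}}\,\d s=Df$ and Fubini, integrating the first summand over $(s,t)$ produces $g^\wedge\,Df$ and, symmetrically, the second produces $f^\wedge\,Dg$.

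I expect the third summand to be the main obstacle. Here one must invoke the fine structure theory of sets of finite perimeter and $\BV$ functions in $\RCD$ spaces (rectifiability of reduced boundaries and of the jump sets $J_f:=\{f^\wedge<f^\vee\}$, and the Vol'pert–type slicing of the reduced boundary of $\{f>s\}$; see \cite{1BakryEmeryAmbrosio,SemolaGaussGreen,BrenaGigli}): for $\mathcal H^{h}$‑a.e.\ $x\in J_f$ and a.e.\ $s\in(f^\wedge(x),f^\vee(x))$ the point $x$ lies in the reduced boundary of $\{f>s\}$ with $\nu_{\{f>s\}}(x)=\nu_f(x)$, whereas for $x\notin J_f$ it lies there only for the single value $s=f^\wedge(x)=f^\vee(x)$; moreover $\nu_f=\pm\nu_g$ holds $\mathcal H^{h}$‑a.e.\ on $J_f\cap J_g$. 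Together with several Fubini arguments ensuring that all ``spurious'' coincidences of the reduced boundaries of $\{f>s\}$ and $\{g>t\}$ occurring off $J_f\cap J_g$ are $\mathcal H^{h}$‑negligible for a.e.\ $(s,t)$, one finds that after integration the third summand is concentrated on $J_f\cap J_g$, that the constraint $\nu_{\{f>s\}}=\nu_{\{g>t\}}$ selects exactly the portion where $\nu_f=\nu_g$, and that there it equals $\tfrac{f^\vee-f^\wedge}{2}Dg+\tfrac{g^\vee-g^\wedge}{2}Df$ (both expressions vanishing where $\nu_f=-\nu_g$). Summing the three contributions,
\[
D(fg)=g^\wedge Df+f^\wedge Dg+\tfrac{f^\vee-f^\wedge}{2}Dg+\tfrac{g^\vee-g^\wedge}{2}Df=\bar g\,Df+\bar f\,Dg\, ,
\]
and $|D(fg)|\le|\bar f|\,|Dg|+|\bar g|\,|Df|$ follows from $|\nu_f|=|\nu_g|=1$ and the triangle inequality for measures. (Alternatively one could reduce to the chain rule $D(\phi\circ f)=\phi'(\bar f)\,Df$ by polarization, using $\overline{f+g}=\bar f+\bar g$ off the diffuse parts and on the jump sets; but the route above has the advantage of relying only on the coarea and cut‑and‑paste formulas recalled here.)
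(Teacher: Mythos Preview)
The paper does not prove this lemma; it is quoted from \cite{BrenaGigli} as part of the background on $\BV$ calculus, so there is no in-paper argument to compare your proposal against.

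Your route via layer-cake plus the cut-and-paste identity of Theorem~\ref{Thm:CutAndPastePer} is a legitimate Euclidean-style strategy, but in the present setting there are two genuine problems. First, a hypothesis mismatch: the lemma is stated for $\RCD(K,\infty)$, while Theorem~\ref{Thm:CutAndPastePer}, the gauge measure $\mathcal{H}^h$, and the rectifiability/Vol'pert slicing you rely on are all developed in \cite{1BakryEmeryAmbrosio,SemolaGaussGreen,SemolaCutAndPaste} under a finite upper dimension bound $N<\infty$. As written your argument at best covers the $\RCD(K,N)$ case. Second, the whole weight of the formula --- the passage from $g^\wedge Df+f^\wedge Dg$ to $\bar g\,Df+\bar f\,Dg$ --- is carried by the third summand, and your treatment of it (one-sided limits at jump points, $\nu_{\{f>s\}}=\nu_f$ for a.e.\ level, vanishing of the diffuse part of $Dg$ on $J_f\setminus J_g$, and ``several Fubini arguments ensuring that all spurious coincidences\dots are negligible'') invokes precisely the fine structure theory of \cite{BrenaGigli}, which is the reference the Leibniz rule is being taken from. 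You would need to verify carefully that those structural facts do not themselves rest on the Leibniz formula; otherwise the argument is circular. The preliminary reduction $\bar f=\overline{f_+}-\overline{f_-}$ likewise requires the existence of one-sided approximate limits at $\mathcal{H}^h$-a.e.\ jump point, so it is not a cost-free step either.

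For contrast, the expected approach in \cite{BrenaGigli} is to regularize via the heat semigroup: for $f_t=\mathrm{P}_tf$, $g_t=\mathrm{P}_tg$ one has the Sobolev Leibniz rule $\nabla(f_tg_t)=f_t\nabla g_t+g_t\nabla f_t$, and the task becomes identifying the weak limit of each side as $t\downarrow 0$, using that $\mathrm{P}_tf\to\bar f$ in a sense compatible with $|Dg|$. That route works directly in $\RCD(K,\infty)$ and avoids the structure theory altogether.
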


\begin{proposition}[$\BV$ extension]    \label{extension of BV}
Let $(X, \sfd, \m)$ be an $\mathrm{RCD}(K,\infty)$ space, $E$ a set of locally finite perimeter and $f\in \BV(X)\cap  L^\infty(E)$. Then 
$$
\tilde{f} (x) := 
\begin{cases}
\Bar{f}(x)  \quad & \mathrm{if} \ x \in E\, ,\\
0 & \mathrm{elsewhere}
\end{cases}
$$
belongs to $\BV(X)$ and $D\tilde{f} = \Bar{f} D\chi_E + Df|_E$.
\begin{proof}
The result immediately follows by applying \eqref{Leibniz Rule BV} with $g = \chi_E$. 
\end{proof}
\end{proposition}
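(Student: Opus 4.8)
The plan is to recognise $\tilde f$ as (a representative of) the product $\bar f\,\chi_E$ and to read the statement off the Leibniz rule \eqref{Leibniz Rule BV}. First I would note that $\bar f=f$ $\m$-a.e., and that $\tilde f$ and $\bar f\,\chi_E$ can differ only on the essential boundary $\partial^*E$ of $E$, which is $\m$-negligible; hence $\tilde f=\bar f\,\chi_E$ $\m$-a.e., so $\tilde f\in\BV(X)$ if and only if $\bar f\,\chi_E\in\BV(X)$, and then $D\tilde f=D(\bar f\,\chi_E)$. It therefore suffices to apply \eqref{Leibniz Rule BV} with $g=\chi_E$, which gives $D(\bar f\,\chi_E)=\bar f\,D\chi_E+\overline{\chi_E}\,Df$, and to observe that $\overline{\chi_E}$, the precise representative of $\chi_E$, equals $1$ on $E^{(1)}$ and $0$ on $E^{(0)}$, so that $\overline{\chi_E}\,Df$ is exactly what is meant by $Df|_E$.

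The hypotheses of \eqref{Leibniz Rule BV} require both factors to lie in $L^\infty(X)$, while we only assume $f\in L^\infty(E)$; I would bypass this by truncation. For $M\ge\|f\|_{L^\infty(E)}$ set $f^M:=\min\{M,\max\{-M,f\}\}$: truncation preserves $\BV(X)$, so $f^M\in\BV(X)\cap L^\infty(X)$ and the previous paragraph applies to $f^M$, giving $\widetilde{f^M}\in\BV(X)$ with $D(\widetilde{f^M})=\overline{f^M}\,D\chi_E+Df^M|_E$. Since $f^M=f$ $\m$-a.e.\ on $E$ we have $\widetilde{f^M}=\tilde f$, and it remains to replace $\overline{f^M}$ by $\bar f$ and $Df^M|_E$ by $Df|_E$; this is legitimate because $\tilde f$ depends only on $f|_E$ and $f^M=f$ $\m$-a.e.\ there, so the relevant precise representatives/traces coincide $|D\chi_E|$- and $|Df|$-a.e.\ once $M\ge\|f\|_{L^\infty(E)}$.

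I expect the only genuine obstacle to be the measure-theoretic bookkeeping on the $\m$-negligible set carrying $D\chi_E$ (and possibly part of $Df$), namely $\partial^*E$: one must ensure that the products $\bar f\,D\chi_E$ and $\overline{\chi_E}\,Df$ are well defined and unaffected by the truncation. This is where I would invoke the fine theory of $\BV$ functions, precise representatives and one-sided traces on $\RCD$ spaces from \cite{BrenaGigli}; away from that set the conclusion is a purely formal consequence of \eqref{Leibniz Rule BV}. Once these identifications are in place the proof is short, consistently with the statement being essentially a corollary of the Leibniz rule.
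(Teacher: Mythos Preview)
Your approach is essentially the same as the paper's: both reduce the statement to the Leibniz rule \eqref{Leibniz Rule BV} applied with $g=\chi_E$. The paper's proof is the single line ``immediately follows by applying \eqref{Leibniz Rule BV} with $g=\chi_E$''; your additional truncation step to reconcile the hypothesis $f\in L^\infty(E)$ with the $L^\infty(X)$ requirement of the Leibniz rule, and your explicit identification of $\overline{\chi_E}\,Df$ with $Df|_E$, fill in details the paper leaves implicit.
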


\begin{lemma}[Cut and paste of $\BV$ Functions]\label{BV cut and paste}
Let $(X, \sfd, \m)$ be an $\mathrm{RCD}(K,\infty)$ m.m.s. and $E$ a set of locally finite perimeter. Let $f \in \BV(E)$ and $g \in \BV(X\setminus E)$. Let $h:X \to \R$ be defined as
\begin{align*}
h(x):=
\begin{cases}
f(x) \quad \mbox{if } x \in E\, ; \\
g(x) \quad \mbox{if } x \in X\setminus E\, . 
\end{cases}
\end{align*}
Then, $h \in \BV(X)$. Moreover, called $\bar{f}, \bar{g}$ the representatives given by \eqref{eq:defbarf}, it holds
$$
    Dh = Df|_E +Dg|_{X\setminus E} + (\Bar{f} - \Bar{g})D\chi_E.
$$
\begin{proof}
Let $\tilde{f}$ and $\tilde{g}$ be the extensions by zero given by Proposition \ref{extension of BV}. Then $h = \tilde{f} + \tilde{g}$.
\end{proof}
\end{lemma}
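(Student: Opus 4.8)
The plan is to reduce the two-sided gluing to the one-sided extension result, Proposition \ref{extension of BV}, applied once to the pair $(E,f)$ and once to the complementary pair $(X\setminus E,g)$, and then simply to add the two extensions using the linearity of the differential $D$. Thus the proof is essentially bookkeeping on top of Proposition \ref{extension of BV}.

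First I would record the elementary fact that $X\setminus E$ is again a set of locally finite perimeter, with $\chi_{X\setminus E}=1-\chi_E$, so that $D\chi_{X\setminus E}=-D\chi_E$; in particular $|D\chi_{X\setminus E}|=|D\chi_E|$ and $\nu_{X\setminus E}=-\nu_E$. Applying Proposition \ref{extension of BV} to $f$ on $E$ then produces $\tilde f\in\BV(X)$, namely the extension of $\bar f$ by zero outside $E$, with
\[
D\tilde f=\bar f\,D\chi_E+Df|_E,
\]
and applying the same proposition to $g$ on $X\setminus E$ produces $\tilde g\in\BV(X)$ with
\[
D\tilde g=\bar g\,D\chi_{X\setminus E}+Dg|_{X\setminus E}=-\bar g\,D\chi_E+Dg|_{X\setminus E}.
\]

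The second step is to identify $h$ with $\tilde f+\tilde g$ in $L^1_{\mathrm{loc}}(X)$: on $E$ one has $\tilde g=0$ and $\tilde f=\bar f=f$ $\m$-a.e., while on $X\setminus E$ one has $\tilde f=0$ and $\tilde g=\bar g=g$ $\m$-a.e.; since the two defining pieces overlap only on $X\setminus(E^{(1)}\cup E^{(0)})$, which is $\m$-negligible, we get $h=\tilde f+\tilde g$ $\m$-a.e. Because $\BV(X)$ is a vector space and $D$ is linear, $h\in\BV(X)$ and $Dh=D\tilde f+D\tilde g$, which, after cancelling the $\pm\bar g\,D\chi_E$ terms, is exactly
\[
Dh=Df|_E+Dg|_{X\setminus E}+(\bar f-\bar g)\,D\chi_E.
\]

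The only genuinely delicate point is the hypothesis check for Proposition \ref{extension of BV}, which is proved via the Leibniz rule with $g=\chi_E$ and is naturally stated for bounded $\BV$ functions: if $f$ or $g$ fails to be bounded one should first truncate at level $M$, apply the extension and the above formula to $f_M:=(-M)\vee(f\wedge M)$ (and likewise for $g$), and then let $M\to\infty$, using lower semicontinuity of the total variation together with the convergence $Df_M|_E\to Df|_E$ and $\overline{f_M}\to\bar f$ (the latter $|D\chi_E|$-a.e., to control the boundary term $\bar f\,D\chi_E$). I expect this limiting step, and more generally the careful handling of the precise representatives $\bar f,\bar g$ on the measure-theoretic boundary of $E$, to be the main — though modest — obstacle; the remainder is just linearity of $D$.
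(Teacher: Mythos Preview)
Your proposal is correct and follows exactly the paper's approach: extend $f$ and $g$ by zero via Proposition \ref{extension of BV}, observe $h=\tilde f+\tilde g$, and read off the formula for $Dh$ by linearity. Your additional remarks about the truncation step and the handling of precise representatives are more careful than the paper's own two-line proof, but do not alter the underlying argument.
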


\section{Monotonicity Formula}

A classical and extremely powerful tool for studying sets which locally minimize the perimeter in Euclidean spaces is the monotonicity formula for the perimeter. The goal of this section is to generalize such monotonicity formula (with the associated rigidity statement) to perimeter minimizers  in cones over $\mathrm{RCD}$  spaces. In the next section, we will draw some applications on the structure of the singular set of local perimeter minimizers.

Recall that given an $\mathrm{RCD}(N-2,N-1)$ space  $(X,\sfd_X,\m_X)$ then the metric-measure cone over $X$, denoted by $(C(X), \sfd_C, \m_C)$, is an  $\mathrm{RCD}(0,N)$ space (if $N=2$, we also assume that $\mathrm{diam}(X)\le \pi$). We denote by $O=(0,x)\in C(X)$  the tip of the cone (see Section \ref{SubSec:MMCones} for more details) and $B_r(O)$ the open metric ball centered at $O$ of radius $r>0$. 

When we consider a local perimeter minimizer $E$, we shall always assume that $E=E^{(1)}$ is the open representative, given by the measure theoretic interior. See \cite{KinunnenJGA2013} for the relevant background.

\begin{theorem}[Monotonicity Formula]\label{Thm:MonotonicityBody}
Let $N\ge 2$ and let $(X,\sfd,\m)$ be an $\mathrm{RCD}(N-2,N-1)$ space (with ${\rm diam}(X)\le \pi$, if $N=2$). Let $C(X)$ be the metric measure cone over $(X,d,\m)$. Let $E \subset C(X)$ be a global perimeter minimizer in the sense of Definition \ref{def:LocPerMin}. Then the function $\Phi : (0,\infty) \to \R$ defined by
\begin{align}\label{eq:defPhi}
    \Phi(r) := \frac{\Per(E;B_r(O))}{r^{N - 1}}\, ,
\end{align}
is non-decreasing. Moreover, if there exist $0<r_1<r_2<\infty$ such that $\Phi(r_1)=\Phi(r_2)$, then $E\cap \big(B_{r_2}(O)\setminus \overline{B_{r_1}(O)} \big)$ is a conical annulus, in the sense that there exists $A\subset X$ such that 
$$
E\cap \big(B_{r_2}(O)\setminus \overline{B_{r_1}(O)} \big)= C(A) \cap \big(B_{r_2}(O)\setminus \overline{B_{r_1}(O)} \big)\, ,
$$
where $C(A)=\{(t,x)\in C(X)\colon x\in A\}$ is the cone over $A\subset X$.
In particular, if  $\Phi$ is constant on $(0,\infty)$, then $E$ is a cone (in the sense that there exists $A\subset X$ such that $E=C(A)$).
\end{theorem}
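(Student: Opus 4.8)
The strategy is the classical one: differentiate $\Phi$, reinterpret the derivative via the first variation / comparison with cones, and read off both monotonicity and rigidity. Write $P(r):=\Per(E;B_r(O))$. Since $\Per(E)$ is a Radon measure, $P$ is non-decreasing and by the coarea formula applied to the distance function $\rho:=\sfd_C(O,\cdot)$ (which is $1$-Lipschitz, with $|\nabla\rho|=1$ $\m_C$-a.e. on the cone since the radial lines are geodesics), $P$ is differentiable for $\mathcal L^1$-a.e. $r$ with
\begin{equation*}
P'(r)\ \ge\ \int_{\partial^* E\cap \partial B_r(O)} |\nabla^{\partial B_r}\rho|\ \dPer(E)\quad\text{(tangential part)},
\end{equation*}
more precisely $P'(r)$ splits into a ``radial part'', coming from the portion of $\partial^* E$ that meets $\partial B_r(O)$ transversally and contributing a slicing term $\int \dPer(E\cap\partial B_r(O))$, and a ``spherical part'' $\int_{\partial^* E\cap\partial B_r(O)} \sqrt{1-\langle\nu_E,\nabla\rho\rangle^2}\,\dPer$. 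The sign identity $\tfrac{d}{dr}\log\Phi = \tfrac{P'(r)}{P(r)} - \tfrac{N-1}{r}$ shows that monotonicity is equivalent to the inequality $r\,P'(r)\ge (N-1)P(r)$ for a.e. $r$.

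To get $r\,P'(r)\ge(N-1)P(r)$ I would use the minimality of $E$ against the competitor obtained by \emph{coning off} $E$ inside $B_r(O)$: replace $E\cap B_r(O)$ by $C\big(E\cap\partial B_r(O)\big)\cap B_r(O)$, i.e.\ the set $\{(t,x): t<r,\ (r,x)\in E\}$. This competitor agrees with $E$ outside $\overline{B_r(O)}$ (for a.e.\ $r$, where the slice is a genuine finite-perimeter set on $\partial B_r(O)\cong$ a rescaled copy of $X$), and by the scaling structure of the cone its perimeter inside $B_r(O)$ is exactly $\tfrac{r}{N-1}\,\mathcal H^h_{\partial B_r}(\partial^* E\cap\partial B_r(O))$ — the cone over a $(N-2)$-dimensional slice of perimeter-measure $Q(r)$ has perimeter $\tfrac{r}{N-1}Q(r)$, because the cone measure scales like $t^{N-1}$. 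Global minimality then gives $P(r)\le \tfrac{r}{N-1}Q(r)$, while the slicing/coarea inequality gives $Q(r)\le P'(r)$ for a.e.\ $r$ (the radial part of $P'$ dominates the pure slice). Chaining these two, $P(r)\le \tfrac{r}{N-1}P'(r)$, which is exactly the desired differential inequality; hence $\Phi$ is non-decreasing after checking absolute continuity of $\log P$ (standard, since $P$ is monotone and the differential inequality forbids singular parts from contributing the wrong sign — or one argues directly that $r\mapsto P(r)/r^{N-1}$ is monotone using the integrated form).

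For the rigidity statement, suppose $\Phi(r_1)=\Phi(r_2)$. Then $\Phi$ is constant on $[r_1,r_2]$, so all the inequalities above are equalities for a.e.\ $r\in(r_1,r_2)$: in particular $P(r)=\tfrac{r}{N-1}Q(r)$, meaning the coned-off competitor has \emph{exactly} the same perimeter as $E$ in $B_r(O)$, so $E\cap B_r(O)$ is itself perimeter-minimizing \emph{and} equal to its own cone-off competitor up to measure zero — forcing $\nu_E\perp\nabla\rho$ $\Per(E)$-a.e.\ in the annulus (the ``spherical part'' of $P'$ must vanish, i.e.\ $\langle\nu_E,\nabla\rho\rangle=0$). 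By Proposition \ref{characterization of scalar product}, $\langle\nu_E,\nabla\rho\rangle=0$ says precisely that (a suitable representative of) $\chi_E$ has vanishing radial derivative $|\nabla \chi_E^{(x)}|=0$ for a.e.\ $x$, i.e.\ $\chi_E$ is independent of the radial coordinate on $B_{r_2}(O)\setminus\overline{B_{r_1}(O)}$, which is exactly the assertion $E=C(A)$ there for $A:=\{x\in X:(t,x)\in E\text{ for some/all admissible }t\}$; this is where Lemma \ref{characterization of cones} (the ``characterization of cones'' referenced in the preliminaries) enters to turn the a.e.\ radial-invariance of $\chi_E$ into the genuine conical structure. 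Letting $r_1\to0$, $r_2\to\infty$ gives the final sentence. \textbf{Main obstacle:} making the competitor construction rigorous in the low-regularity setting — namely that for a.e.\ $r$ the slice $\partial^*E\cap\partial B_r(O)$ is a finite-perimeter subset of the cross-section with the expected perimeter, that gluing the cone-off to $E$ produces an admissible competitor whose perimeter has no extra contribution on $\partial B_r(O)$ (controlling the ``jump part'' $\mathcal H^h|_{\{\nu_E=\pm\nu_F\}}$ in Theorem \ref{Thm:CutAndPastePer}), and that the coarea slicing inequality $Q(r)\le P'(r)$ holds with the correct constant. The rigidity's final step — upgrading $\langle \nu_E,\nabla\rho\rangle = 0$ a.e.\ to $E$ literally being a cone — is the second delicate point, requiring the fine BV/perimeter calculus on cones from Section \ref{SubSec:MMCones}.
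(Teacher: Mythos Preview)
Your plan is the classical direct ``cone-off competitor'' argument, and conceptually it coincides with the paper's strategy. The paper, however, implements it through a smoothing layer: it approximates $\chi_E$ by $f_k\in\mathrm{LIP}\cap D_{\mathrm{loc}}(\Delta)$ via the heat flow, defines $\Phi_{f_k}(r)=r^{-(N-1)}\int_{B_r}|\nabla f_k|\,\d\m_C$, and for each $f_k$ builds the radially constant competitor $h(t,x):=f_k(r,x)$ (the smooth analogue of your cone-off of the slice). Minimality of $E$ becomes almost-minimality of $f_k$ with explicit error terms $a_k(r),b_k(r)\to0$, yielding a differential inequality for $\Phi_{f_k}$ which passes to the limit. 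For rigidity the paper tracks the non-negative remainder $\int (\nabla f_k\cdot\nabla\rho)^2/|\nabla f_k|$ through the limit via a joint lower-semicontinuity lemma for varying measures (Lemma~\ref{Joint Lower Semicontinuity}), arriving at $\nu_E\cdot\nabla\rho=0$ $\Per(E)$-a.e.\ in the annulus, and then invokes Lemma~\ref{characterization of cones} exactly as you propose.

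The gain of the approximation route is precisely to sidestep the obstacles you list at the end: your direct argument needs (i) that for a.e.\ $r$ the trace $E\cap\partial B_r$ is a finite-perimeter subset of the cross-section with a well-defined $Q(r)$, (ii) a BV-coarea inequality on $\partial^*E$ giving $Q(r)\le P'(r)$, and (iii) control of the jump term on $\partial B_r$ when gluing the cone-off to $E$ outside $B_r$. None of these slicing/trace statements are available off-the-shelf in the $\RCD$ setting, whereas for $W^{1,2}$ approximants the paper can use the Sobolev calculus on warped products from \cite{Gigli2018SobolevSO} and reduce gluing to a trace identity for Lipschitz functions. Your route is more transparent geometrically and would be preferable if the slicing machinery were in place; the paper's is more robust in low regularity. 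One minor correction: $P'(r)$ does not split additively into ``radial'' and ``spherical'' parts as you describe; rather the (heuristic) coarea formula on $\partial^*E$ gives $P'(r)=\int_{\partial^*E\cap\partial B_r}(1-\langle\nu_E,\nabla\rho\rangle^2)^{-1/2}\,\d\mathcal H^{N-2}\ge Q(r)$, which is the inequality you actually use.
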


\begin{remark}
In the case where $E \subset C(X)$ is a locally finite perimeter set,  minimizing the perimeter for perturbations supported in $B_{R+1}(O)$, then the monotonicity formula holds on $(0,R)$, i.e. the function $\Phi$ defined in \eqref{eq:defPhi} is non-decreasing on $(0,R)$. Also the rigidity statement holds, for $0<r_1<r_2<R$. The proofs are analogous.
\end{remark}

\begin{proof}[Proof of Theorem \ref{Thm:MonotonicityBody}]
Let us first give an outline of the argument. The first two steps are inspired by the approach used in the lecture notes \cite{MooneyLectureNotes}, which provide a proof of the monotonicity formula for local perimeter minimizers in Euclidean spaces by-passing the first variation formula. Classical references for this approach are \cite{Federerbook,Morganbook}.\\
The main idea is to approximate the characteristic function of $E$ by regular functions $f_k$ and approximate $\Phi$ by the corresponding $\Phi_{f_k}$; show an almost-monotonicity formula for $\Phi_{f_k}$ and finally pass to the limit and get the monotonicity of $\Phi$. This will be achieved in steps 1-3. 
In step 4 we relate the derivative of $\Phi$  with a quantity characterizing cones as in Lemma \ref{characterization of cones}.

Throughout the proof, we will write $B_r$ in place of $B_r(O)$ for the ease of notation.
\medskip

\textbf{Step 1: Approximation preliminaries.} \\ In this step we show that, up to error terms, regular functions approximating $\chi_E$ preserve the minimality condition. The argument requires an initial approximation. Let $f \in \mathrm{LIP}(C(X))\cap {\rm D}_{\rm loc}(\Delta)(C(X))$ be non-negative. We introduce two functions  $a$, $b: [0,\infty) \xrightarrow[]{} [0,\infty)$ to quantify the errors in the approximation:
\begin{align}\label{errors}
    & a(r) := \left| |D f|(B_r) - \mathrm{Per}(E;B_r) \right|\, , & b(r) := \int_{\partial B_r} |\tr_{\partial B_r}^{\rm ext} \chi_E - \tr_{\partial B_r} f|\,  \dPer{(B_r)}\, ,
\end{align}
where  $\tr_{\partial B_r}^{\rm  ext} \chi_E$ is the trace of $\chi_E$ from the exterior of the  ball $B_r$.\\
We remark that the interior and exterior normal traces can be defined by considering the precise representative of $\chi_E\cdot\chi_{B_r}$ and $\chi_E\cdot\chi_{X\setminus B_r}$ respectively. See \cite[Lemma 3.23]{BrenaGigli} and \cite{AmbrosioFuscoPallara} for the Euclidean theory.

Notice that $\tr_{\partial B_r}^{\rm  ext} f= \tr_{\partial B_r} f  = f|_{\partial B_r}$, since $f$ is continuous. 
Fix $R>0$.  Let $0<r<R$ and $g \in \BV_\mathrm{loc}(C(X))$ be any function such that 
$$\tr_{\partial B_r}^{\rm  int} g = \tr_{\partial B_r} f\quad  \text{and} \quad  g = \chi_E \text{ on } C(X)\setminus B_r\, ,$$
 where  $\tr_{\partial B_r}^{\rm  int} g$ is the trace of $g$ from the interior of the ball $B_r$. The minimality of $E$ implies
\begin{align*}
    \Per(E; B_R) \leq \Per(\{q \in C(X) \, :\,  g(q) > t \}; B_R),
\end{align*}
for any $0<t<1$. Integrating in $t$ and using the coarea formula \eqref{coarea formula 1} we obtain
\begin{align*}
    \Per(E; B_R) \leq \int_0^1 \Per(\{q \in C(X) \, :\,  g(q) > t \}; B_R) \, \d t \leq  |D g| (B_R)\, .
\end{align*}
Therefore, using Lemma \ref{BV cut and paste} and the definition of $g$, we obtain 
\begin{align*}
    \Per(E;B_r) & = \Per(E;B_R)-\Per(E;B_R \setminus B_r) \leq |D g| (B_R) - \Per(E;B_R \setminus B_r) \\
    &= |D g| (B_r) + \int_{\partial B_r} |\tr_{\partial B_r}^{\rm ext} \chi_E - \tr_{\partial B_r} f| \, \dPer(B_r) = |D g| (B_r) + b(r)\, .
\end{align*}
Finally, for any such $g$ there holds
\begin{align} \label{almost minimizer for its perimeter}
    |D f| (B_r) \leq \Per(E;B_r) + a(r) \leq |D g| (B_r) +a(r) + b(r)\, .
\end{align}
\smallskip

\textbf{Step 2. Main computation. }\\
In this step we show the monotonicity, up to error terms, of an approximation  of $\Phi$, denoted below by $\Phi_f$, obtained by replacing $\chi_E$ with the regular approximation $f$ of step 1. 

Fix $f$ as in step 1 and $r>0$. By \cite{Gigli2018SobolevSO},
$$|\nabla f |^2 (t,x) = |\nabla  f^{(x)}|^2(t) + t^{-2}|\nabla f^{(t)}|^2(x)\, , \text{for $\m$-a.e. $x\in X$ and $\mathcal{L}^1$-a.e. $t>0$}\, .$$
Let $h:C(X)\xrightarrow[]{}\R$ be defined by $h(t,x):=f^{(r)}(x)$ for all $t> 0$. Notice that $h$ is locally Lipschitz away from the origin and it is elementary to check that it has locally bounded variation.\\ By \cite{Gigli2018SobolevSO,GIGLIIndependenceOnP}, it holds
\begin{align*}
    |D h|(t,x) = \frac{r}{t} |\nabla f^{(r)}|(x)\, ,\quad \text{for $\m$-a.e. $x$ and  $\mathcal{L}^1$-a.e. $t$}\, .
\end{align*}
By integrating over $B_r$ and using the coarea formula, we obtain
\begin{align}
     \int_{B_r}  |D h| (t,x) \, \d\m_C &= \int_0^r \int_{\partial B_t} |D h| (t,x) \, \dPer( B_t)\,dt 
    = \int_0^r t^{N - 1} \int_{X} \frac{r}{t} |\nabla f^{(r)}|(x) \d\m \, \d t \nonumber \\
    &= \int_0^r \frac{t^{N - 2}}{r^{N - 2}} \int_{\partial B_r}  |\nabla f^{(r)}|(x)\, \d\Per(B_r) \, \d t \label{tangential derivative identity}\\
&    =  \frac{r}{N - 1} \int_{\partial B_r} |\nabla f^{(r)}|(x) \, \dPer(B_r)\, . \nonumber 
\end{align}
Let us point out that the latter expression can be viewed as the integral on $\partial B_r$ of the analog of the tangential derivative of $f$ in the smooth case, while $h$ is the radial extension of the values of $f$ on $\partial B_r$ to the whole of $C(X)$.
Given $r>0$, let us introduce the quantity
$$
J(r):= \int_{B_r} |\nabla f| (t,x)\, \d\m_C= \int_0^r t^{N - 1} \int_X |\nabla f| (t,x)\, \d\m \, \d t\, ,
$$
which will approximate $r^{N - 1} \Phi (r)$. Notice that $J$ is a Lipschitz function, hence it is almost everywhere differentiable. Using the identity \eqref{tangential derivative identity},  we obtain that for a.e. $r$ it holds
\begin{equation} \label{First formula for J}
\begin{aligned}
    J'(r) &= \int_{\partial B_r} |\nabla f| (r,x) \, \dPer(B_r)\\ 
    &= \frac{N - 1}{r} \int_{B_r} |D h| (t,x) \, \d\m_C + \int_{\partial B_r}\left ( |\nabla f| (r,x) - |\nabla f^{(r)}|(x)\right )\, \dPer(B_r)\, .
\end{aligned}
\end{equation}
We notice that $\tr_{\partial B_r} h = \tr_{\partial B_r} f$. By defining $\tilde{h}:C(X)\to \R$ to be equal to $h$ inside $B_r$ and $\chi_E$ outside, we observe that \eqref{tangential derivative identity} and \eqref{First formula for J} still hold if we replace $h$ by $\tilde{h}$ (here  it is key that $B_r$ is the open ball). Therefore, in step $1$ we can choose $g=\tilde{h}$ and  \eqref{almost minimizer for its perimeter} reads as 
\begin{equation}\label{eq:JrInttildeh}
    \int_{B_r} |D \tilde{h}|(t,x) \, \d\m_C + a(r) + b(r) \geq J(r)\, .
\end{equation}
Substituting \eqref{eq:JrInttildeh} into \eqref{First formula for J} and rearranging, yields
\begin{align}\label{second formula for J}
    J'(r) - \frac{N - 1}{r}J(r) \geq \int_{\partial B_r}\left ( |\nabla f| (r,x) - |\nabla f^{(r)}|(x)\right )\, \dPer(B_r) - \frac{N - 1}{r}(a(r)+b(r))\, .
\end{align}

With a slight abuse, in order to keep notation simple, below we will write $\nabla\sfd_C(O, (r,x))$ to denote $\nabla(\sfd_C (O, \cdot))(r,x)$.
Using Proposition \ref{characterization of scalar product} together with the fact that $1-\sqrt{1-s} \geq \frac{s}{2}$, for $0\leq s\leq 1$, 
the $\mathrm{BL}$ characterization of the norm \cite[Def.\;3.8]{Gigli2018SobolevSO}  of  $|\nabla f|$ and the identification between minimal weak upper gradients for different exponents on $\RCD$ spaces from \cite{GIGLIIndependenceOnP}, we have
\begin{equation}\label{difference to product}
    \begin{aligned}
    \frac{|\nabla f|(r,x) - |\nabla f^{(r)}|(x)}{|\nabla f|(r,x)}&=1-\sqrt{1-\frac{\left(\nabla f (r,x) \cdot \nabla  \sfd_C (O, (r,x))\right )^2}{|\nabla f|(r,x)^2}}  \\
    &\geq \frac{\left( \nabla f (r,x) \cdot \nabla  \sfd_C (O, (r,x))\right )^2}{2|\nabla f|(r,x)^2}\\
    &=  \frac{\left( \nabla f (r,x) \cdot \nabla (\frac{1}{2}\sfd^2_C (O, (r,x)))\right )^2}{2r^2\left(|\nabla f|(r,x)\right)^2}\, ,
\end{aligned}
\end{equation}
for $\m$-a.e. $x\in X$ and a.e. $r\in(0,+\infty)$. Above, we understand that all the term vanish on the set where $|\nabla f|=0$.

Let us now define the function 
$$\Phi_f (r) := \frac{\int_{B_r} |\nabla f|(t,x) \, \d\m_C}{r^{N - 1}}= \frac{J(r)}{r^{N - 1}}  $$ 
which will approximate the function $\Phi$ in the statement of the theorem. Notice that $\Phi_f$ is Lipschitz and differentiable almost everywhere by the coarea formula \eqref{coarea formula 1}. Taking its derivative and using  \eqref{second formula for J} and \eqref{difference to product} we obtain that for a.e. $r$ it holds
\begin{align} \label{Derivative of approximant}
\Phi_f' (r) &= \frac{J'(r) - \frac{N - 1}{r}J(r)}{r^{N - 1}} \nonumber\\
&\geq \int_{\partial B_r} \frac{\left(\nabla f (r,x) \cdot \nabla \left(\frac{1}{2} \sfd^2_C (O, (r,x))\right)\right )^2}{2r^{N+1}|\nabla f|(r,x)}\, \dPer(B_r) -\frac{N - 1}{r^{N}}(a(r)+ b(r))\, .
\end{align}
Integrating \eqref{Derivative of approximant} from $0< r_1 <r_2 <\infty$, and using coarea formula, we get
\begin{equation}\label{equation: final smooth estimate for the monotonicity formula}
    \begin{aligned}
         \Phi_f(r_2) - \Phi_f (r_1) &
         \geq \int_{B_{r_2}\setminus \overline{B_{r_1}}}\frac{\left(\nabla f (r,x) \cdot \nabla \left(\frac{1}{2} \sfd^2_C (O, (r,x))\right)\right )^2}{2r^{N+1}|\nabla f|(r,x)}\,\d\m_C\\ 
         &\qquad - \int_{r_1}^{r_2}\frac{N - 1}{r^{N}}(a(r)+b(r)) \, \d r\, .
    \end{aligned}
\end{equation}

\textbf{Step 3. Approximation.}\\
In this step we carry out an approximating argument, using step 2 and in particular \eqref{equation: final smooth estimate for the monotonicity formula}. This allows us to conclude the monotonicity part of the theorem.

Let $\{f_k\}_{k \in \N} \subset \mathrm{LIP} (C(X))\cap {\rm D}_{\rm loc}(\Delta)$ be a sequence of non-negative functions converging in $\mathrm{BV_\mathrm{loc}}(X)$ to $\chi_E$. That is, 
\begin{equation}\label{BV convergence of approximants}
    |f_k - \chi_E|_{L^1(B_r)} \stackrel{k \to \infty}{\longrightarrow} 0, \qquad |D f_k | (B_r) \stackrel{k \to \infty}{\longrightarrow} |D \chi_E | (B_r), \qquad \text{for all } r>0\, .  
\end{equation}
Such sequence can be easily constructed by approximation via the heat flow, see for instance \cite{AmbrosioBakryEmery} for analogous arguments.

Let us start by showing that the errors defined in \eqref{errors} relative to $f_k$ go to zero as $k$ tends to $\infty$. 

The term $a_k(r) := \left| |D f_k|(B_r) - \mathrm{Per}(E;B_r) \right| \stackrel{k \to \infty}{\longrightarrow} 0 $ by $\BV_\mathrm{loc}$-convergence of $f_k$ to $\chi_E$, i.e. \eqref{BV convergence of approximants}. 

To deal with the error term $b_k(r) := \int_{\partial B_r} |\tr_{\partial B_r}^{\rm ext} \chi_E - \tr_{\partial B_r} f_k| \dPer{(B_r)} $, we can use the coarea formula \eqref{coarea formula 1} to show that 
$$
    \int_{B_r} |f_k - \chi_E| \, \d\m_C = \int_0^r b_k(s) \, \d s\, .
$$
Together with the $L^1$-convergence of $f_k$ to $\chi_E$, this shows that $b_k(r) \to 0 $ for $\mathcal{L}^1$-a.e.\;$r>0$.  

Lastly, let us show that $\Phi_f(r)\to \Phi(r)$ for $\mathcal{L}^1$-a.e.\;$r>0$. 
By $\BV_\mathrm{loc}$ convergence of $f_k$ to $\chi_E$ \eqref{BV convergence of approximants}, we have
\begin{equation}\label{approximation of monotonicity function}
\begin{aligned}
    \lim_{k\to \infty}\Phi_{f_k}(r)&=  \frac{1}{r^{N - 1}} \lim_{k\to \infty} \int_{B_r} |D f_k| \d\m_C = \frac{1}{r^{N - 1}}\int_{B_r} \dPer(E)= \Phi(r)\, .
\end{aligned}
\end{equation}
Consequently, letting $k\to \infty$ in the estimate \eqref{equation: final smooth estimate for the monotonicity formula} with $f$ replaced by $f_k$,  we obtain
\begin{equation}\label{monotonicity}
    \Phi(r_2) - \Phi(r_1) \geq 0\,, \qquad \mbox{for }\mathcal{L}^1 \mbox{-almost every } \, r_2 > r_1 >0\, ,
\end{equation}
thanks to the non-negativity of the term 
\begin{equation}
    \label{rigidity term}
    \int_{B_{r_2}\setminus \overline{B_{r_1}}}\frac{\left(\nabla f_k (r,x) \cdot \nabla \left(\frac{1}{2} \sfd^2_C (O, (r,x))\right)\right )^2}{2r^{N+1}|\nabla f_k|(r,x)}\,\d\m_C \geq 0\, .
\end{equation}
To conclude that $\Phi$ is monotone, we need to extend \eqref{monotonicity} to every $r_2>r_1 > 0$.

Let $\{r_k\}_{k \in \N}$ be any sequence such that $r_k \uparrow r$. Since $B_r$ is open, $B_{r_k} \uparrow B_r$. Hence, by the inner regularity of measures
$$
    \mathrm{Per}(E; B_{r_k}) \to \mathrm{Per}(E;B_r)\, .
$$
Let $r_2$, $r_1 > 0$. Since the set of radii for which \eqref{monotonicity} holds is dense, we can find $\{r_{1,k}\}_{k \in \N}$ and $\{r_{2,l}\}_{l \in \N}$ for which \eqref{monotonicity} holds and such that $r_{1,k} \uparrow r_1$ and $r_{2,l} \uparrow r_2$. Then
$$
    0 \leq \lim_{l \to \infty} \Phi(r_{2,l}) - \lim_{k \to \infty} \Phi(r_{1,k}) = \Phi(r_2) - \Phi(r_1)\, .
$$
\smallskip

\textbf{Step 4. Rigidity.}\\
In this step we focus on the rigidity part of the statement. We show that if there exist $r_2>r_1>0$ such that $\Phi(r_1)=\Phi(r_2)$, then $E\cap \big(B_{r_2}\setminus \overline{B_{r_1}}\big)$ is a cone.

The first step is to prove the following claim: 
\begin{equation}
    \label{convergence of term}
    \begin{aligned}
     \liminf_{k \to \infty} & \int_{B_{r_2}\setminus \overline{B_{r_1}}}\frac{\left(\nabla f_k (r,x) \cdot \nabla \left(\frac{1}{2} \sfd^2_C (O, (r,x))\right)\right )^2}{2r^{N+1}|\nabla f_k|(r,x)}\,\d\m_C \\
    &\geq \int_{B_{r_2}\setminus \overline{B_{r_1}}} \frac{\left(\nu_E (r,x) \cdot \nabla \left(\frac{1}{2} \sfd^2_C (O, (r,x))\right)\right )^2}{2r^{N+1}}\,\dPer(E)\, ,
    \end{aligned}
\end{equation}
where $\nu_E$ is the unit normal to $E$, see Theorem \ref{thm:NuEGG}. Subsequently, we will be able to conclude using the characterization of cones provided by Lemma \ref{characterization of cones}. 
\smallskip

The plan is to apply Lemma \ref{Joint Lower Semicontinuity} to prove \eqref{convergence of term}.
Using the notation in Lemma \ref{Joint Lower Semicontinuity}, we define the measures 
\begin{equation}
    \label{measure mu k}
    \mu_k := |\nabla f_k| \m_C \llcorner_{(B_{r_2} \setminus \overline{B_{r_1}})}\, , \qquad  \mu := \mathrm{Per}(E; \cdot) \llcorner_{(B_{r_2} \setminus \overline{B_{r_1}})}\, .
\end{equation}
The $\BV_\mathrm{loc}$-convergence of $f_k$ to $\chi_E$ (see \eqref{BV convergence of approximants}) ensures that
$\mu_k \weakto \mu$ in duality with $\C_{\rm b}(C(X))$.
The functions 
\begin{equation*}
    g_k : = \frac{\nabla f_k \cdot \nabla \left(\frac{1}{2} \sfd^2_C (O, \cdot))\right)}{\sqrt{2}r^{\frac{N+1}{2}}|\nabla f_k|} \cdot \chi_{\{|\nabla f_k| >0\}} \in L^2(C(X); \mu_k)
\end{equation*}
satisfy \eqref{equi boundedness lemma}. Indeed, 
$$
\nabla f_k (r,x)\cdot \nabla \left(\frac{1}{2} \sfd^2_C (O, (r,x))\right) \leq \frac{1}{2} |\nabla f_k|(r,x) |\nabla \sfd^2_C(O, (r,x))| = r |\nabla f_k| (r,x)\, \quad\text{$\m_{C}$-a.e.}\, .
$$
Therefore, using \eqref{distance squared to distance},
\begin{equation*}
    \begin{aligned}
            \|g_k\|^2_{(X; \mu_k)} &= \int_{B_{r_2} \setminus \overline{B_{r_1}}} \frac{\left(\nabla f_k \cdot \nabla \left (\frac{1}{2} \sfd^2_C (O, \cdot))\right)\right)^2}{2r^{N+1}|\nabla f_k|} \cdot \chi_{\{|\nabla f_k| >0\}} \d\m_C \\
             & \leq  \int_{B_{r_2} \setminus \overline{B_{r_1}}} \frac{1}{2r^{N-1}} |\nabla f_k|\, \d\m_C < C < +\infty\, ,
    \end{aligned}
\end{equation*}
for some $C>0$ independent of $k \in \N$ thanks to the $\BV_\mathrm{loc}$-convergence \eqref{BV convergence of approximants}.\\ 
Consequently, Lemma \ref{Joint Lower Semicontinuity} provides the existence of $g \in L^2(C(X); \mu)$ and a subsequence $k(l)$ such that 
\begin{equation}\label{unknown weak convergence}
    \frac{\nabla f_{k(l)} \cdot \nabla \left(\frac{1}{2} \sfd^2_C (O, \cdot)\right)}{\sqrt{2}r^{\frac{N+1}{2}}|\nabla f_{k(l)}|} \cdot \chi_{\{|\nabla f_{k(l)}| >0\}}\,\mu_{k(l)} \weakto g\;  \mathrm{Per}(E; \cdot) \llcorner_{(B_{r_2} \setminus \overline{B_{r_1}})}\, ,
\end{equation}
in duality with $\C_{\rm b}(C(X))$.
Up to relabelling the approximating sequence $f_k$, we can suppose that the whole sequence satisfies \eqref{unknown weak convergence}. We next determine the limit function $g$. 

Fix a test function $\varphi \in \mathrm{LIP}\cap {\rm D}(\Delta) (B_{r_2} \setminus \overline{B_{r_1}})$ with compact support contained in $B_{r_2} \setminus \overline{B_{r_1}}$.

We apply the Gauss-Green formula (Theorem \ref{thm:NuEGG}) and use that $\varphi$ has compact support in $B_{r_2} \setminus \overline{B_{r_1}}$ to obtain 
\begin{equation}\label{application of Gauss Green to the approximating sequence}
\begin{aligned}
    \int_{B_{r_2} \setminus \overline{B_{r_1}}} &\;  f_k\, \div \left (\frac{\varphi}{\sqrt{2}r^{\frac{N+1}{2}}} \nabla \left( \frac{1}{2} \sfd^2_C(O,\cdot)\right)\right) \, \d\m_C \\
    &=  - \int_{B_{r_2} \setminus \overline{B_{r_1}}}  \frac{\varphi}{\sqrt{2}r^{\frac{N+1}{2}}} \left( \nabla f_{k} \cdot \nabla \left(\frac{1}{2} \sfd^2_C (O, \cdot)\right) \right) \, \d\m_C \\
    &= - \int_{B_{r_2} \setminus \overline{B_{r_1}}}  \varphi \frac{\nabla f_{k} \cdot \nabla \left(\frac{1}{2} \sfd^2_C (O, \cdot)\right)}{\sqrt{2}r^{\frac{N+1}{2}}|\nabla f_{k}|} \d\mu_{k}\, .
    \end{aligned}
\end{equation}
Using the $L^1$-convergence of $f_k$ to $\chi_E$ and that 
$$
\left \| \frac{\div (\varphi \nabla (\frac{1}{2}\sfd^2_C(O,\cdot))}{\sqrt{2}r^{\frac{N+1}{2}}}\right \|_{L^\infty (B_{r_2} \setminus \overline{B_{r_1}})} < \infty\, ,
$$
we infer that
\begin{align}
    \lim_{k\to \infty} &\int_{B_{r_2} \setminus \overline{B_{r_1}}} f_k \, \div \left (\frac{\varphi}{\sqrt{2}r^{\frac{N+1}{2}}} \nabla \left( \frac{1}{2}\sfd^2_C(O,\cdot) \right)\right)\, \d\m_C = \int_{E} \div \left (\frac{\varphi}{\sqrt{2}r^{\frac{N+1}{2}}} \nabla \left(\frac{1}{2}\sfd^2_C(O,\cdot) \right)\right) \, \d\m_C \nonumber \\
    & = - \int_{\partial^* E} \frac{\varphi} {\sqrt{2}r^{\frac{N+1}{2}}} \; \nabla \left(\frac{1}{2} \sfd^2_C (O, \cdot) \right) \cdot \nu_E \, \dPer(E)\, , \label{approximation of first term coming from Gauss Green}
    \end{align}
where in the last equality we used the Gauss-Green formula (Theorem \ref{thm:NuEGG}). Combining \eqref{application of Gauss Green to the approximating sequence} and \eqref{approximation of first term coming from Gauss Green} we obtain
\begin{equation}
    \label{uniqueness of weak convergence of measures Gauss-Green}
    \begin{aligned}
    \lim_{k\to\infty} \int_{C(X)}  \varphi \frac{\nabla f_{k} \cdot \nabla \left(\frac{1}{2} \sfd^2_C (O, \cdot)\right)}{\sqrt{2}r^{\frac{N+1}{2}}|\nabla f_{k}|} \d\mu_{k} 
    = \int_{\partial^* E} \frac{\varphi} {\sqrt{2}r^{\frac{N+1}{2}}} \nabla \left(\frac{1}{2} \sfd^2_C (O, (\cdot))\right) \cdot \nu_E \, \dPer(E).
    \end{aligned}
\end{equation}
That is, 
\begin{equation}
    \label{gauss green convergence}
     \frac{\nabla f_k \cdot  \nabla \left(\frac{1}{2} \sfd^2_C (O, \cdot))\right)}{{\sqrt{2}r^{\frac{N+1}{2}}}|\nabla f_k|} \mu_k \weakto \frac   {\nabla \left(\frac{1}{2} \sfd^2_C (O, (\cdot))\right) \cdot \nu_E} {{\sqrt{2}r^{\frac{N+1}{2}}}}  \mathrm{Per}(E)
\end{equation}
in duality with $ \C_c(B_{r_2}\setminus \overline{B_{r_1}})$, by approximation. By the uniqueness of the weak limit and from \eqref{unknown weak convergence} we can conclude that
$$
    g =  \frac{\nabla \left(\frac{1}{2} \sfd^2_C (O, \cdot)\right)\cdot \nu_E}{\sqrt{2}r^{\frac{N+1}{2}}} \qquad \mathrm{Per}(E)|_{B_{r_2} \setminus \overline{B_{r_1}}}\mbox{-almost everywhere}\, .
$$
From \eqref{lower semicontinuity part of the lemma} in Lemma \ref{Joint Lower Semicontinuity}, we have
$$
    \liminf_{k \to \infty} \|g_{k}\|^2_{L^2(C(X);\mu_{k})} \geq \|g\|^2_{L^2(C(X); \mu)}\, .
$$
That is, we have shown the claim \eqref{convergence of term}. 
\medskip

We are now in position to improve the estimate \eqref{monotonicity} and use it to show the rigidity part of the theorem. By taking the inferior limit in \eqref{equation: final smooth estimate for the monotonicity formula}, recalling \eqref{approximation of monotonicity function} and that the error terms go to zero from step 3, we use \eqref{convergence of term} to infer
\begin{equation}
    \label{improved monotonicity}
    \Phi(r_2) - \Phi (r_1) \geq \int_{B_{r_2}\setminus \overline{B_{r_1}}} \frac{\left(\nu_E (r,x) \cdot \nabla \left(\frac{1}{2} \sfd^2_C (O, (r,x))\right)\right )^2}{2r^{N+1}}\,\dPer(E) \geq 0,
\end{equation}
for every $r_2>r_1>0$. Since we are assuming $\Phi(r_1)=\Phi(r_2)$, if follows that
$$
    \nabla ( \sfd_C (O, \cdot)) \cdot \nu_{E} =0 \quad \mathrm{Per}(E) \mbox{-a.e.\;on } B_{r_2}\setminus \overline{B_{r_1}}\, .
$$
By applying Lemma \ref{characterization of cones}, we can conclude that $E\cap \big(B_{r_2}\setminus \overline{B_{r_1}}\big)$ is a conical annulus.

\end{proof}

Let us now prove a useful characterization of conical annuli contained in cones over $\mathrm{RCD}$ spaces. The characterization is  based on the properties of the normal to the boundary of the subset: roughly the subset is conical if and only if its normal is orthogonal to the gradient of the distance function from the tip of the ambient conical space. In case the ambient space is Euclidean, the result is classical (see for instance \cite[Proposition 28.8]{maggi_2012}).

\begin{lemma}[Characterization of conical annuli]\label{characterization of cones}

Let $(X,\sfd,\m)$ be an $\mathrm{RCD}(N-2,N-1)$ space and let $C(X)$ be the cone over $X$. Let $E \subset C(X)$ be a locally finite perimeter set and let $0<r_1<r_2<\infty$. Then the measure theoretic interior $E^{(1)}\cap \big(B_{r_2}(O)\setminus \overline{B_{r_1}(O)}\big)$ is a conical annulus if and only if
\begin{equation}\label{cone characterization}
\nabla ( \sfd_C (O, \cdot)) \cdot \nu_{E} =0 \quad \mathrm{Per}(E)\text{-a.e. on }  B_{r_2}(O)\setminus \overline{B_{r_1}(O)}.
\end{equation}
\end{lemma}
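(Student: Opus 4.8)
The plan is to establish both implications via the Gauss–Green formula of Theorem \ref{thm:NuEGG}, using the radial vector field $X_O := \nabla(\tfrac12 \sfd_C^2(O,\cdot))$ (so that, away from $O$, $X_O = t\,\nabla\sfd_C(O,\cdot)$ by \eqref{distance squared to distance}, and $|X_O| = \sfd_C(O,\cdot)$), whose integral curves are the radial rays $r\mapsto (r,x)$ generating the cone. The orthogonality condition \eqref{cone characterization} says exactly that $X_O$ is tangent to $\partial E$ in the measure-theoretic sense on the annulus, which is the infinitesimal version of "$E$ is invariant under the radial flow there". For the easy direction, if $E\cap(B_{r_2}(O)\setminus\overline{B_{r_1}(O)})$ coincides with a conical annulus $C(A)\cap(B_{r_2}(O)\setminus\overline{B_{r_1}(O)})$, then by the product/warped-product structure of $C(X)$ one computes $\nu_E$ explicitly along $\partial C(A)$: its radial component vanishes because $\partial C(A) = \{(t,y): y\in\partial^X A\}$ is a union of rays, so $\nabla\sfd_C(O,\cdot)\cdot\nu_E = 0$ $\Per(E)$-a.e. — here I would invoke the cone calculus of Subsection \ref{SubSec:MMCones}, in particular Proposition \ref{characterization of scalar product}, applied to (an approximation of) $\chi_E = \chi_{C(A)}$, together with the coarea slicing of $\Per(E)$ into the cross-sectional perimeters of $A$.

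For the main (nontrivial) direction, assume \eqref{cone characterization}. The strategy is to show that $\chi_E$, restricted to the open annulus $U := B_{r_2}(O)\setminus\overline{B_{r_1}(O)}$, is constant along almost every radial ray, i.e. for $\m$-a.e. $x\in X$ the function $t\mapsto \chi_E(t,x)$ is (a.e. equal to) a constant on the interval $(r_1,r_2)\cap\{t : (t,x)\in U\} = (r_1,r_2)$; then setting $A := \{x\in X : (t,x)\in E \text{ for a.e. } t\in(r_1,r_2)\}$ yields the claim, after checking that $E^{(1)}\cap U = C(A)\cap U$ up to $\m_C$-null sets and that both are open representatives. To prove the radial-constancy, I would test the Gauss–Green formula with vector fields of the form $v = \psi(t,x)\, X_O$ where $\psi$ ranges over a suitable dense class of test functions compactly supported in $U$: the boundary term $\int \tr_E(v)\cdot\nu_E\,\dPer(E) = \int \psi\,(X_O\cdot\nu_E)\,\dPer(E)$ vanishes by hypothesis, so $\int_E \div(\psi X_O)\,\d\m_C = 0$ for all such $\psi$. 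Using that $\div X_O = N\,\chi_{C(X)}$ on the cone (the warped-product analogue of $\div(x\cdot\nabla) = d$ in $\R^d$; equivalently $\Delta(\tfrac12\sfd_C^2(O,\cdot)) = N$, which follows from the cone structure and the $\RCD(0,N)$ Laplacian comparison being an equality for the radial function — this is where I would cite the cone computations behind Proposition \ref{metric cone definition} and \ref{characterization of scalar product}) and $\div(\psi X_O) = \psi\,\div X_O + \nabla\psi\cdot X_O = N\psi + t\,\partial_t\psi$ (by \eqref{radial scalar product}, the scalar product with $X_O$ reduces to the radial derivative $t\,\partial_t$), one gets $\int_E (N\psi + t\,\partial_t\psi)\,\d\m_C = 0$. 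Writing $\d\m_C = t^{N-1}\,\d t\,\d\m$ and integrating by parts in $t$ shows this is exactly the statement that the measure $\chi_E(t,x)\,t^{N-1}\d t\,\d\m$ is, fiberwise in $x$, annihilated by the vector field $t\,\partial_t + N = t^{1-N}\partial_t(t^N\,\cdot)$, forcing $\chi_E(\cdot,x)$ to be a.e. constant in $t$ on $(r_1,r_2)$ for $\m$-a.e. $x$.

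The main obstacle I anticipate is the last step done rigorously in the low-regularity $\RCD$ setting: one does not have classical integral curves of $X_O$, so "invariance along the radial flow" must be extracted purely from the distributional identity $\int_E(N\psi + t\,\partial_t\psi)\,\d\m_C = 0$. The clean way is to use the fiber disintegration $\m_C = t^{N-1}\d t\otimes\m$ (available by the warped-product definition of the cone), apply Fubini to reduce to a one-dimensional claim for $\m$-a.e. fixed $x$, and there use that a locally $\BV$ function $u(t) = \chi_E(t,x)$ on $(r_1,r_2)$ with $\int (N\psi + t\psi')\,u\,t^{N-1}\d t = 0$ for all $\psi\in C_c^\infty(r_1,r_2)$ has distributional derivative zero, hence is a.e. constant; that the slices are $\BV$ for a.e. $x$ follows from the coarea/slicing theory for $\BV$ functions on cones (as in Proposition \ref{characterization of scalar product} and the references to \cite{Gigli2018SobolevSO}). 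A secondary technical point is passing between the measure-theoretic interior $E^{(1)}$ and the constructed cone $C(A)$: one must verify $C(A)$ has locally finite perimeter in $U$ (a consequence of Lemma \ref{locally finite perimeter of a cone over a set} and coarea, so that $A$ itself has finite perimeter in $X$) and that $(C(A))^{(1)}\cap U = E^{(1)}\cap U$, which follows from $\m_C$-a.e. equality of the sets together with the Lebesgue-density characterization of $E^{(1)}$.
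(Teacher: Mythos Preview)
Your proposal is correct and takes a genuinely different route from the paper, especially for the hard direction (\eqref{cone characterization} $\Rightarrow$ conical). The paper introduces the auxiliary volume function $u(s)=\m_C(E\cap C(B_r^X(x))\cap B_s)$, derives via Gauss--Green the key identity
\[
\frac{\d}{\d s}\frac{u(s)}{s^N}=-\frac{1}{s^{N+1}}\int_{C(B_r^X(x))\cap B_s}\nabla\Big(\tfrac12\sfd^2_C(O,\cdot)\Big)\cdot\nu_E\,\dPer(E),
\]
and then passes through a density argument with the anisotropic neighbourhoods $A((s,x),r)=C(B_r^X(x))\cap B_{s(1+r)}\setminus B_{s(1-r)}$, invoking Lebesgue differentiation and Bishop--Gromov to conclude that $(s,x)\in E^{(1)}\Leftrightarrow(\lambda s,x)\in E^{(1)}$. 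Your argument bypasses all of this by testing Gauss--Green directly with $v=\psi\,X_O$, reducing to the distributional identity $\int_E(t^N\psi)'\,\d t\,\d\m=0$ fiberwise in $x$ (using $\d\m_C=t^{N-1}\d t\otimes\m$), and concluding radial constancy of $\chi_E(\cdot,x)$ by elementary one-dimensional distribution theory; this is cleaner and avoids the somewhat delicate Lemma \ref{ratio between radii in A} on the comparability of the $A(q,r)$ with metric balls. For the easy direction, the paper actually carries out precisely the Gauss--Green computation you sketch (see their Step~1 for $C(B_r^X(x))$, which works verbatim with any $A\subset X$ in place of $B_r^X(x)$), so your plan is sound there too; note that the same identity $\int_E(t^N\psi)'\,\d t\,\d\m=0$ run in reverse gives the easy direction immediately and symmetrically, which you may find tidier than the explicit-normal argument. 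The only points to be careful about are the localization needed to apply Theorem~\ref{thm:NuEGG} (which asks $\m(E)<\infty$; intersect with a large ball containing $\mathrm{supp}\,\psi$) and, in the Fubini step, restricting first to product test functions $\psi(t,x)=\phi(t)\eta(x)$ and then passing to a countable dense family of $\phi$ to get the fiberwise conclusion for $\m$-a.e.\ $x$.
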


\begin{proof}
As in the proof of Theorem \ref{Thm:MonotonicityBody}, to keep notation short we will write $B_r$ to denote the open ball of radius $r>0$ and centered at the tip of the cone, i.e. $B_r=B_r(O)$. Also, we will write  $B_r^X(x)$ for the open ball in $X$, of center $x$ and radius $r>0$.
For simplicity of presentation we will show the equivalence only in the case $r_1=0,\; r_2=\infty$. The general case requires minor modifications.
Moreover, in order to simplify the notation, we assume without loss of generality that $E=E^{(1)}$, as the condition \eqref{cone characterization} is clearly independent of the chosen representative.
\medskip

\textbf{Step 1.}\\ 
We start with some preliminary computations aimed to establish the identity \eqref{derivative of u} below, which will be key in showing the characterization of conical annuli in $C(X)$. 

Using the Gauss-Green and the coarea formulas, we will express the derivative of the function
$$
    u(s):= \m_C (E\cap C(B^X_r(x))\cap B_s)
$$
(suitably rescaled) as the product between the unit normal of $E$ and the gradient of the distance function from the tip of $C(X)$.
 
Let $x \in X$, $r,s >0$. By Lemma \ref{locally finite perimeter of a cone over a set} and Theorem \ref{Thm:CutAndPastePer} the set $F:=E\cap C(B^X_r(x))\cap B_s$ is a set of finite perimeter with 
\begin{align*}
    D \chi_F =& D \chi_E \llcorner_{C(B^X_r(x))\cap B_s} + \nabla( \sfd_C (O, \cdot)) \mathrm{Per}(B_s)\llcorner_{E \cap C(B^X_r(x))} \\
   &+ \nu_{C(B^X_r(x))} \mathrm{Per}(C(B^X_r(x)))\llcorner_{E\cap B_s}\, .
\end{align*}
Using the Gauss-Green formula (Theorem \ref{thm:NuEGG}), the equality for the laplacian of the distance function from the tip on cones \cite[Prop.\;3.7]{ConeMetric},  and cut and paste of sets of locally finite perimeter (Theorem \ref{Thm:CutAndPastePer}), we obtain 
\begin{equation}
\begin{aligned}\label{first computation of measure of F}
   N\cdot u(s) &= \int_F \Delta \left(\frac{1}{2}\sfd^2(O,\cdot)\right) \, \d\m_C   \\
   &= \int_{C(B^X_r(x))\cap B_s \cap \partial^* E}\nabla\left(\frac{1}{2} \sfd^2(O,\cdot)\right)\cdot \nu_E \, \dPer (E) \\
   &\quad + \int_{E \cap C(B^X_r(x))\cap \partial B_s} \nabla\left(\frac{1}{2} \sfd^2(O,\cdot)\right)\cdot \nu_{\partial B_s} \, \dPer (B_s)\\
   &\quad + \int_{E \cap B_s \cap \partial C(B^X_r(x))} \nabla \left(\frac{1}{2} \sfd^2(O,\cdot)\right)\cdot \nu_{C(B^X_r(x))} \, \dPer (C(B^X_r(x)))\, .
   \end{aligned}
\end{equation}
We now study separately the three integrals on the right hand side of  \eqref{first computation of measure of F}, starting from the last one.\\  
Fix a function $\varphi \in \mathrm{LIP}(C(X)) \cap {\rm D}(\Delta)$ with compact support. By applying the Gauss-Green Theorem \ref{thm:NuEGG} on the set of locally finite perimeter $C(B^X_r(x))$, we obtain
\begin{equation*}
\begin{aligned}
&\int_{\partial C(B^X_r(x))} \varphi \, \nu_{C(B^X_r(x))} \cdot \nabla \left (\frac{1}{2} \sfd^2 (O,\cdot) \right) \, \dPer (C(B^X_r(x))) \\
&\quad = - \int_{C(B^X_r(x))} \nabla \varphi \cdot \nabla \left (\frac{1}{2} \sfd^2 (O,\cdot) \right) \d\m_C + \int_{C(B^X_r(x))} \varphi \Delta \left (\frac{1}{2} \sfd^2 (O,\cdot) \right) \d\m_C \\
& \quad = \int_{B^X_r(x)} \int_0^\infty \partial_r \varphi^{(y)}(r) \cdot \partial_r \left (\frac{1}{2} \sfd_{(y)}^2 (r) \right)r^{N - 1} \, \d r \ \d\m(y) + \int_{C(B^X_r(x))} \varphi N \d\m_C \\
& \quad =  - \int_{B^X_r(x)} \int_0^\infty \varphi^{(y)}(r) N r^{N - 1} \, \d r \ \d\m(y) + \int_{C(B^X_r(x))} \varphi N \d\m_C =0,
\end{aligned}
\end{equation*}
where we have used \eqref{radial scalar product}, the definition of $\m_C$ and integration by parts on $\R$.  Since $\varphi$ was arbitrary, we  infer that
$$
    \nu_{C(B^X_r(x))} \cdot \nabla \left (\frac{1}{2} \sfd^2 (O,\cdot) \right)=0  \quad \mathrm{Per} (C(B^X_r(x)))  \text{-a.e.}\, 
$$
and thus
\begin{equation}\label{eq:3rdInt=0}
    \int_{E \cap B_s \cap \partial C(B^X_r(x))} \nabla \left(\frac{1}{2} \sfd^2(O,\cdot)\right)\cdot \nu_{C(B^X_r(x))} \, \dPer (C(B^X_r(x)))=0\, .
\end{equation}
Let us now deal with the second integral appearing in the right hand side of \eqref{first computation of measure of F}.  By the chain rule, we have $\nabla \left (\frac{1}{2} \sfd^2 (O, q) \right) = \sfd(O, q) \nabla \sfd(O,q)$. Therefore, we obtain
\begin{equation}   \label{term on the boundary of the sphere}
\begin{aligned}
    & \int_{E \cap C(B^X_r(x))\cap \partial B_s} \nabla \left(\frac{1}{2} \sfd^2(O,\cdot)\right)\cdot \nu_{\partial B_s} \, \dPer (B_s) \\
    &\quad = \int_{E \cap C(B^X_r(x))\cap \partial B_s} \sfd(O,\cdot)\;  \nabla  \sfd(O,\cdot) \cdot \nu_{\partial B_s} \, \dPer (B_s) \\
    & \quad  = s \mathrm{Per} (B_s; E \cap C(B^X_r(x)))\, ,
\end{aligned}
\end{equation}
where we used \cite[Prop.\;6.1]{SemolaCutAndPaste}. Inserting \eqref{eq:3rdInt=0} and \eqref{term on the boundary of the sphere} into \eqref{first computation of measure of F}, yields
\begin{equation}\label{second computation of measure of F}
\begin{aligned}
    u(s) &= \frac{1}{N}\int_{C(B^X_r(x))\cap B_s}\nabla \left(\frac{1}{2}\sfd^2(O,\cdot)\right)\cdot \nu_E \, \dPer (E)+ \frac{s}{N} \mathrm{Per} (B_s; E \cap C(B^X_r(x)))\, .
\end{aligned}
\end{equation}
By the coarea formula \eqref{coarea formula 1}, $u$ is Lipschitz and differentiable almost everywhere  and it holds 
\begin{equation}\label{coarea u}
    u(s)= \int_0^s \mathrm{Per} (B_t; E \cap C(B^X_r(x)))\, \d t\, .
\end{equation}
We now compute the derivative of $\frac{u(s)}{s^{N}}$. Combining \eqref{second computation of measure of F} and  \eqref{coarea u}, we obtain that for a.e. $s$ it holds
\begin{equation}\label{derivative of u}
    \begin{aligned}
         \frac{\d}{\d s}\frac{u(s)}{s^{N}} = \frac{u'(s)}{s^{N}} - N \frac{u(s)}{s^{N+1}} =  - \frac{\int_{C(B^X_r(x))\cap B_s}\nabla \left(\frac{1}{2}\sfd^2(O,\cdot)\right)\cdot \nu_E \, \dPer (E) }{s^{N+1}}.
    \end{aligned}
\end{equation}
Fix $0<r<\min(s, 1)$. In the following steps, we consider the sets
\begin{equation}\label{cones intersected annuli type sets}
A((s,x), r):= C(B^X_r(x))\cap B_{s(1+r)}\setminus B_{s(1-r)}\, .
\end{equation} 

By Lemma \ref{ratio between radii in A} below, the family of sets $\{A(q,r):\,  q\in C(X), \, r>0\}$ generates the Borel $\sigma$-algebra of $C(X)$, since for any $q \in C(X)$, $r>0$ there exist $r_a$, $r_b>0$ such that 
\begin{equation}\label{equivalence of family of sets A}
B(q,r_a) \subset A(q,r) \subset B(q, r_b)\, .
\end{equation}

\textbf{Step 2.}\\ 
In this step we show that if $E$ is a cone, then  \eqref{cone characterization} holds with $r_1=0, r_2=\infty$. We will first show  that $\frac{u(s)}{s^N}$ is constant, and then conclude using \eqref{derivative of u}.

Since $E$ is a cone, there exists a set $F \subset X$ such that  $E = \{(t,x) \in C(X): x \in F,\ t \geq 0 \}.$ 
Note that $E \cap C(B^X_r(x))$ is a cone for any $x \in X$, $r>0$. Thus, for any $s >0$, it holds:
\begin{align*}
    \m_C(E \cap C(B^X_r(x))\cap B_s ) = \m(F \cap B^X_r(x))\int_{0}^s \rho^{N-1}\, \d\rho  = \frac{s^N}{N} \m(F \cap B^X_r(x))\, ,
\end{align*}
yielding that $s\mapsto s^{-N} u(s)$ is constant.\\ 
From \eqref{derivative of u} it follows that for all $r>0$, $p \in C(X)$ we have
\begin{equation}\label{vanishing average on sets A}
    \int_{A(p, r)}\nabla\left(\frac{1}{2}\sfd^2(O,\cdot)\right)\cdot \nu_E \, \dPer (E) = 0.
\end{equation}
By the Lebesgue differentiation Theorem (see for instance \cite[Rem.\;2.19]{rankOneTheorem}),  for $\mathrm{Per}(E)$-a.e.\;$p\in C(X)$ it holds
\begin{align*}
   \lim_{r \to 0} \dashint_{B_r(p)}\left|\nabla \left(\frac{1}{2}\sfd^2(O,q)\right)\cdot \nu_E(q)  - \nabla \left(\frac{1}{2}\sfd^2(O,p)\right)\cdot \nu_E(p)\right| \dPer (E)(q)= 0\, .
\end{align*}

From \eqref{equivalence of family of sets A} and the asymptotic doubling property of the perimeter we infer 
\begin{align*}
   & \lim_{r \to 0} \frac{1}{\Per(E,A(p,r))} \int_{A(p,r)}\left|\nabla\left(\frac{1}{2}\sfd^2(O,q)\right)\cdot \nu_E(q)  - \nabla \left(\frac{1}{2}\sfd^2(O,p)\right)\cdot \nu_E(p)\right| \dPer (E)(q) \\
   &\quad \le C \lim_{r \to 0} \frac{1}{\Per(E,B_{r_b}(p))} \int_{B_{r_b}(p)}\left|\nabla \left(\frac{1}{2}\sfd^2(O,q)\right)\cdot \nu_E(q)  - \nabla\left(\frac{1}{2}\sfd^2(O,p)\right)\cdot \nu_E(p)\right| \dPer (E)(q)\\
   &\quad = 0\, ,
\end{align*}
where $r_a$ and $r_b$ are as in Lemma \ref{ratio between radii in A}. We can now conclude recalling \eqref{vanishing average on sets A}:
\begin{align*}
    0& = \lim_{r \to 0} \frac{1}{\Per(E,A(p,r))} \int_{A(p,r)}\nabla \left(\frac{1}{2}\sfd^2(O,\cdot)\right)\cdot \nu_E\, \dPer (E)\\ 
    &= \nabla \left(\frac{1}{2}\sfd^2(O,p)\right)\cdot \nu_E(p)\, , \quad \text{for}\quad \Per(E)\text{-a.e. }p\, .
\end{align*}
\smallskip
 
\textbf{Step 3.}\\ 
In this last step we show that \eqref{cone characterization} for $r_1=0, r_2=\infty$ implies that $E$ is a cone.  We will show that given $(s,x) \in E$ and $\lambda>0$, then $(\lambda s,x) \in E$. 

Combining the assumption \eqref{cone characterization} with \eqref{derivative of u}, we obtain that $s\mapsto u(s) / s^N =\m_C(E\cap B_r^X(x)\cap B_s)/s^N$ is constant. Therefore, for $\lambda>0$
\begin{equation}\label{rescaling of measure}
\begin{aligned}
        \m_C(E\cap A((s,x),r))&= \m_C\left(E\cap B_r^X(x)\cap B_{s(1+r)}\right) - \m_C\left(E\cap B_r^X(x)\cap B_{s(1-r)}\right) \\ &= \frac{\m_C\left(E\cap B_r^X(x)\cap B_{\lambda s(1+r)}\right)}{\lambda^N} - \frac{\m_C\left(E\cap B_r^X(x)\cap B_{\lambda s(1-r)}\right)}{\lambda^N} \\ &= \frac{\m_C(E\cap A((\lambda s,x),r))}{\lambda^{N}}.      
\end{aligned}
\end{equation}
Moreover,
\begin{equation}\label{rescaling of measure2}
    \begin{aligned}
    \m_C (A((\lambda t, x),r)) &= \m(B^X_r(x))\int_{\lambda t (1-r)}^{\lambda t (1+r)} s^{N - 1} \, \d s = \lambda^{N} \m(B^X_r(x))\int_{ t (1-r)}^{ t (1+r)} s^{N - 1} \, ds\\ 
    &= \lambda^{N}\m_C (A((t, x),r))),
    \end{aligned}
\end{equation}
The combination of  \eqref{rescaling of measure} and \eqref{rescaling of measure2} gives 
\begin{equation}\label{rescaling of measure3}
    \frac{\m_C(E\cap A((s,x),r))}{\m_C (A((s,x),r))} =\frac{\m_C(E\cap A((\lambda s,x),r))}{\m_C (A((\lambda s,x),r))}\, .
\end{equation}
We next show that  if $(s,x)\in E$ then $(\lambda s, x) \in E$. Thanks to \eqref{rescaling of measure3}, it is enough to show that $q \in E$ if and only if 
\begin{equation}\label{eq:DensityEA}
    \lim_{r \to 0} \frac{\m_C(E\cap A(q,r))}{\m_C (A(q,r))} =1\, .
\end{equation}
Assume by contradiction that \eqref{eq:DensityEA} holds but $q \not \in E$. Then, 
\begin{align*}
    \liminf_{r\to 0} \frac{\m_C ((X\setminus E)\cap B_r(q))}{\m_C (B_r(q))} \geq \varepsilon >0\, . 
\end{align*}
Using Lemma \ref{ratio between radii in A}, we infer that
\begin{equation}
    \label{contradiction measure theoretic interior}
\begin{aligned}
        &\liminf_{r\to 0} \frac{\m_C((X\setminus E)\cap A(q,r))}{\m_C (A(q,r))} \\
        &\geq  \liminf_{r\to 0} \frac{\m_C ((X\setminus E)\cap B_{r_a(q,r)}(q))}{\m_C (B_{r_a(q,r)}(q))} \cdot \frac{\m_C (B_{r_a(q,r)}(q))}{\m_C (B_{r_b(q,r)}(q))}\, .
\end{aligned}
\end{equation}
Since $C(X)$ is an $\RCD(0,N)$ space, the Bishop-Gromov monotonicity formula \cite{Sturm2} gives
$$
    \m_C (B_{r_a}(q)) \geq \left (\frac{r_a}{r_b}\right)^N  \m_C (B_{r_b}(q))\, .
$$
Therefore, from \eqref{contradiction measure theoretic interior} we may conclude
\begin{equation}\label{eq:liminfmXEA}
\begin{aligned}
    \liminf_{r\to 0} \frac{\m_C((X\setminus E)\cap A(q,r))}{\m_C (A(q,r))} &\geq  \liminf_{r \to 0} \left (\frac{r_a(q,r)}{r_b(q,r)}\right)^N  \cdot \liminf_{r \to 0} \frac{\m_C ((X\setminus E)\cap B_{r_a(q,r)}(q))}{\m_C (B_{r_a(q,r)}(q)))} \\
    &\geq  \, C \varepsilon > 0\, ,
\end{aligned}
\end{equation}
where $C :=  \liminf_{r \to 0} \big (\frac{r_a(q,r)}{r_b(q,r)}\big)^N >0$  thanks to \eqref{limit of the ratio of radii}. Clearly, \eqref{eq:liminfmXEA} contradicts \eqref{eq:DensityEA}. The proof that $q\in E$ implies \eqref{eq:DensityEA} is analogous.
 
\end{proof}

The following technical lemma was used in the proof of Lemma \ref{characterization of cones} above.

\begin{lemma}\label{ratio between radii in A}
Let $N\ge 2$, let $(X,\sfd,\m)$ be an $\mathrm{RCD}(N-2,N-1)$ space and let $(C(X), \sfd_C,\m_C)$ be the cone over it. If $N=2$, assume also that $\mathrm{diam}(X)\le \pi$. For $x\in X$ and $0<r<s$, consider the sets
\begin{equation}\label{cones intersected annuli type sets2}
A((s,x), r):= C(B^X_r(x))\cap B_{s(1+r)}(O)\setminus B_{s(1-r)}(O).
\end{equation}
Then:
\begin{itemize}
\item The family of sets $\{A(q,r):\,  q\in C(X), \, r>0\}$ generates the Borel $\sigma$-algebra of $C(X)$;
\item  For any $q \in C(X)$, $r>0$ there exist $r_a=r_a(q,r)$ and $r_b=r_b(q,r)>0$ such that 
$$
B_{r_a}(q) \subset A(q,r) \subset B_{r_b}(q)
$$ 
and
\begin{equation}
    \label{limit of the ratio of radii}
    \lim_{r \to 0}\frac{r_a}{r_b}=\frac{1}{4\sqrt{2}}.
\end{equation}
\end{itemize}
\end{lemma}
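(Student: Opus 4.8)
The plan is to deduce the first bullet from the second by a soft separability argument, and to establish the second by an explicit computation with the cone distance \eqref{distance on a cone}. Fix $q=(s,x)$ with $s>0$ (for $q=O$ there is no admissible $r$, so nothing is asserted) and $0<r<1$. I would use \eqref{distance on a cone} in the form
\[
\sfd_C^2\bigl((s,x),(t,y)\bigr)=(s-t)^2+2st\,\bigl(1-\cos(\sfd_X(x,y)\wedge\pi)\bigr),
\]
together with the fact that $(t,y)\in A((s,x),r)$ exactly when $\sfd_X(x,y)<r$ and $s(1-r)\le t<s(1+r)$.

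\textbf{The two radii.} For the outer inclusion: if $(t,y)\in A((s,x),r)$ then $(s-t)^2\le s^2r^2$, and since $\sfd_X(x,y)<r<\pi$ gives $\cos(\sfd_X(x,y)\wedge\pi)>\cos r$, using $1-\cos r<\tfrac{r^2}{2}$ one gets $2st\,\bigl(1-\cos(\sfd_X(x,y)\wedge\pi)\bigr)<2s\cdot s(1+r)\cdot\tfrac{r^2}{2}=s^2(1+r)r^2$; adding, $\sfd_C^2((s,x),(t,y))<s^2r^2(2+r)$, so one may take $r_b:=r_b(q,r):=sr\sqrt{2+r}$. For the inner inclusion: if $\sfd_C((s,x),(t,y))<\tfrac14 sr$, then $|t-s|<\tfrac14 sr<sr$, whence $s(1-r)<t<s(1+r)$; moreover $t>s-\tfrac14 sr\ge\tfrac34 s>0$, so from $2st\,\bigl(1-\cos(\sfd_X(x,y)\wedge\pi)\bigr)\le\sfd_C^2<\tfrac1{16}s^2r^2$ we obtain $1-\cos(\sfd_X(x,y)\wedge\pi)<\tfrac1{24}r^2$, which is $<1-\cos r$ since $1-\cos r\ge\tfrac{11}{24}r^2$ for $0<r\le1$; hence $\cos(\sfd_X(x,y)\wedge\pi)>\cos r$, i.e.\ $\sfd_X(x,y)<r$ because $\cos$ is strictly decreasing on $[0,\pi]$ and $r<\pi$. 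Thus $(t,y)\in A((s,x),r)$, so one may take $r_a:=r_a(q,r):=\tfrac14 sr$. This gives $B_{r_a}(q)\subset A(q,r)\subset B_{r_b}(q)$ and
\[
\frac{r_a(q,r)}{r_b(q,r)}=\frac{1}{4\sqrt{2+r}}\ \xrightarrow[r\to0]{}\ \frac{1}{4\sqrt2}\,.
\]
For $r\ge1$ one only needs \emph{some} admissible pair $r_a,r_b>0$, which exists because $A(q,r)$ is bounded and $q$ lies in its interior; this has no bearing on the limit.

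\textbf{From the sandwich to the Borel $\sigma$-algebra.} Each $A(q,r)$ is Borel: $C(B^X_r(x))=\{O\}\cup\{(t,y):t>0,\ \sfd_X(x,y)<r\}$, the latter set being open in $C(X)\setminus\{O\}$, and metric balls are open. Let $\mathcal D:=\mathbb Q_{>0}\times\mathcal D_X$ with $\mathcal D_X\subset X$ countable dense; it is a countable dense subset of $C(X)\setminus\{O\}$. Given $w=(t_w,y_w)$ with $t_w>0$ and $\varepsilon>0$, one finds $q=(s,x)\in\mathcal D$ and rational $r\in(0,1)$ with $w\in A(q,r)$ — pick $x$ with $\sfd_X(x,y_w)<r$ and $s\in\mathbb Q$ with $t_w/(1+r)<s\le t_w/(1-r)$ — and, by the sandwich together with $r_b(q,r)\to0$ and $\sfd_C(q,w)\to0$ as $r\to0$, also $A(q,r)\subset B_{r_b(q,r)}(q)\subset B_\varepsilon(w)$. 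Hence $C(X)\setminus\{O\}$ is the union of the countable family $\{A(q,r):q\in\mathcal D,\ r\in\mathbb Q\cap(0,1)\text{ admissible}\}$, so $\{O\}$ lies in the $\sigma$-algebra it generates; and for every open ball $B_\rho(z)\subset C(X)$, the set $B_\rho(z)\setminus\{O\}$ is the union of those members of this family that are contained in $B_\rho(z)$, so $B_\rho(z)$ lies there too. Since $C(X)$ is separable, its open balls generate the Borel $\sigma$-algebra, and the first bullet follows.

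\textbf{Main obstacle.} The only delicate point is the inner inclusion, where the radial constraint $t\in[s(1-r),s(1+r))$ and the angular constraint $\sfd_X(x,y)<r$ must be reconciled through a single radius; this requires keeping track of the elementary estimates for $1-\cos r$ near $0$ and of the $\wedge\pi$ truncation in \eqref{distance on a cone}. The rest is routine.
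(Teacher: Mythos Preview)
Your proof is correct and follows the same overall strategy as the paper --- use the explicit cone-distance formula \eqref{distance on a cone} to sandwich $A(q,r)$ between two metric balls centered at $q$ --- but the executions differ. The paper computes the exact distance from $q=(t,x)$ to $\partial A(q,r)$ by optimizing separately over the two shell pieces and the cylindrical piece of the boundary (finding the minimum $t\sin r$ on the cylindrical part), and likewise the exact maximal distance to the ``corner'' of $A(q,r)$; it then sets $r_a=\tfrac12\,t\sin r$ and $r_b=2t\sqrt{2+2r+r^2-2(1+r)\cos r}$. You bypass this boundary optimization entirely and verify the two inclusions by direct elementary inequalities, arriving at the simpler choices $r_a=\tfrac14\,sr$ and $r_b=sr\sqrt{2+r}$; both pairs yield the same limit $\tfrac{1}{4\sqrt 2}$. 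Your route is marginally more elementary (no minimization), at the cost of slightly cruder constants that are in any case irrelevant for the application. You also spell out the Borel $\sigma$-algebra argument via a countable dense set, whereas the paper simply asserts that the first bullet follows from the second.
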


\begin{proof}
The first claim follows from the second one; thus let us determine $r_a$ and $r_b>0$ that satisfy the second statement. To this aim, we compute the minimal and maximal distance of $q = (t,x)$ from the set $\partial A(q,r)$. Let us start from the minimal distance. We deal with the shell part first: given  $(t(1+r),y) \in \partial B_{t(1+r)} \cap \partial A$ there holds, using \eqref{explicit expression for the distance}
\begin{equation}
    \label{minimum distance from shells}
    \begin{aligned}
    \sfd^2_C ((t,x),(t(1+r), y)) &= t^2 + t^2(1+r)^2 -2t^2(1+r) \cos(\sfd(x,y)) \\
    &\geq t^2 + t^2(1+r)^2 -2t^2(1+r) = t^2 r^2\, ,
    \end{aligned}
\end{equation}
where the equality is achieved at $y=x$. Let now $(s,y) \in \partial C(B^X_r(x)) \cap \partial A(q,r)$:
\begin{equation}
    \label{minimum distance from cylindrical part computation}
    \begin{aligned}
    \sfd^2_C ((t,x),(s,y)) = t^2 + s^2 -2 st \cos(r)\, .
    \end{aligned}
\end{equation}
This defines a differentiable function of $s \in [t(1-r),t(1+r)]$. Its derivative $\partial_s \sfd^2_C ((t,x),(s,y)) = 2s -2t\cos(r)$ is increasing and vanishes at $s=t\cos(r)$. Therefore, we have 
\begin{equation}
    \label{inner radii estimate}
    \sfd^2_C(q, \partial A(q,r)) =   t^2 \sin^2(r)  \, .
\end{equation}
Therefore, we may pick
\begin{equation}
    \label{choice of r_a}
    \begin{aligned}
        r_a  = r_a(q,r)  := \frac{1}{2} \sfd_C(q, \partial A(q,r))  = \frac{1}{2} t \sin(r)\, .
    \end{aligned}
\end{equation}
Next, let us compute the maximal distance of $q$ from $\partial A(q,r)$. Since the maximal distance is attained at the intersection of the shell with the side part of $A$ (by monotonicity of both formulas \eqref{minimum distance from shells} and \eqref{minimum distance from cylindrical part computation} with respect to $\sfd(x,y)$ and $s$, respectively), we can simply compute the maximum by looking at the distance from the top shell. We again compute, for $\sfd(x,y) = r$,
\begin{align*}
    \sfd^2_C ((t,x),(t(1+r), y)) &= t^2 + t^2(1+r)^2 -2t^2(1+r) \cos(r) \\
    &= t^2(2 + 2r +r^2 - 2(1+r)\cos(r))\, .
\end{align*}
Consequently, we may pick
\begin{equation}
    \label{choice of r_b}
    r_b = r_b(q,r) := 2 t \sqrt{(2 + 2r +r^2 - 2(1+r)\cos(r))}\, .
\end{equation}
It is easy to check that $r_a, r_b>0$ defined in  \eqref{choice of r_a},  \eqref{choice of r_b} satisfy \eqref{limit of the ratio of radii}.
\end{proof}

A useful technical tool, used to prove the rigidity statement of the monotonicity formula,  is the following lemma (see \cite[Lemma 10.1]{AmbrosioOptimalTransport} for the proof).

\begin{lemma}[Joint Lower Semicontinuity]
\label{Joint Lower Semicontinuity}
Let $(X,\sfd)$ be a Polish space. Let $\mu, \mu_k \in \mathcal{M}_+(X)$ with $\mu_k \weakto \mu$ in duality with $ \C_b(X)$. Let $g_k \subset L^2(X;\mu_k)$ be a sequence of functions such that 
\begin{equation}
    \label{equi boundedness lemma}
    \sup_{k \in \N} \|g_k\|_{L^2(X;\mu_k)} < \infty.
\end{equation}
Then, there exists a function $g \in L^2(X;\mu)$ and a subsequence $k(l)$ such that 
\begin{equation}
    \label{weak convergence part of the lemma}
    g_{k(l)} \, \mu_{k(l)} \weakto g \mu 
\end{equation}
in duality with $ \C_b(X)$ and 
\begin{equation}
    \label{lower semicontinuity part of the lemma}
    \liminf_{l \to \infty} \|g_{k(l)}\|_{L^2\left(X;\mu_{k(l)}\right)} \geq \|g\|_{L^2(X; \mu)}\, .
\end{equation}
\end{lemma}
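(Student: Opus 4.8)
The plan is to realize $g$ as the density with respect to $\mu$ of a weak subsequential limit of the vector measures $\sigma_k:=g_k\,\mu_k$, combining a Prokhorov-type compactness argument with a duality (Young's inequality) argument that simultaneously identifies the limit and yields the sharp norm bound. First I would record two elementary reductions. Testing $\mu_k\weakto\mu$ against the constant $1\in\C_b(X)$ gives $\mu(X)<\infty$ and $M:=\sup_k\mu_k(X)<\infty$; by Cauchy--Schwarz, $\int_X|g_k|\,\d\mu_k\le\|g_k\|_{L^2(X;\mu_k)}\,\mu_k(X)^{1/2}$ is bounded uniformly in $k$ by \eqref{equi boundedness lemma}. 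I would also put $L:=\liminf_k\|g_k\|^2_{L^2(X;\mu_k)}$ and pass once to a subsequence along which $\|g_k\|^2_{L^2(X;\mu_k)}\to L$.

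For the compactness step, since $\mu_k\weakto\mu$ narrowly on the Polish space $X$, the family $\{\mu_k\}$ is tight by Prokhorov's theorem; given $\varepsilon>0$ and a compact $K$ with $\sup_k\mu_k(X\setminus K)<\varepsilon$, Cauchy--Schwarz gives $\int_{X\setminus K}|g_k|\,\d\mu_k\le\|g_k\|_{L^2(X;\mu_k)}\,\varepsilon^{1/2}$, so the positive measures $|g_k|\,\mu_k$, and hence $g_k^{+}\,\mu_k$ and $g_k^{-}\,\mu_k$, form tight families of uniformly bounded mass. A further application of Prokhorov produces a subsequence $k(l)$ and finite positive measures $\rho^{\pm}$ with $g_{k(l)}^{\pm}\,\mu_{k(l)}\weakto\rho^{\pm}$ in duality with $\C_b(X)$; setting $\sigma:=\rho^{+}-\rho^{-}$ we get $g_{k(l)}\,\mu_{k(l)}\weakto\sigma$ in duality with $\C_b(X)$.

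Then I would identify $\sigma$ and prove the sharp bound at once. For $\varphi\in\C_b(X)$ and $t>0$, integrating the pointwise inequality $\varphi\,g_{k(l)}\le\frac{t}{2}\varphi^2+\frac{1}{2t}g_{k(l)}^2$ against $\mu_{k(l)}$ and letting $l\to\infty$ (using $\varphi^2\in\C_b(X)$) yields $\int_X\varphi\,\d\sigma\le\frac{t}{2}\int_X\varphi^2\,\d\mu+\frac{L}{2t}$; optimizing over $t>0$ and applying the outcome to $\pm\varphi$ gives $\big|\int_X\varphi\,\d\sigma\big|\le\sqrt{L}\,\|\varphi\|_{L^2(X;\mu)}$ for every $\varphi\in\C_b(X)$. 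Since $\C_b(X)$ is dense in $L^2(X;\mu)$ for a finite Borel measure $\mu$, the functional $\varphi\mapsto\int_X\varphi\,\d\sigma$ extends to $(L^2(X;\mu))^{*}$ with norm $\le\sqrt{L}$, and Riesz representation furnishes $g\in L^2(X;\mu)$ with $\|g\|_{L^2(X;\mu)}\le\sqrt{L}$ and $\int_X\varphi\,\d\sigma=\int_X\varphi\,g\,\d\mu$ for all $\varphi\in\C_b(X)$; as finite Borel measures on a Polish space are determined by their integrals against $\C_b(X)$, this forces $\sigma=g\,\mu$. Hence $g_{k(l)}\,\mu_{k(l)}\weakto g\,\mu$, which is \eqref{weak convergence part of the lemma}, while $\|g\|_{L^2(X;\mu)}\le\sqrt{L}=\lim_l\|g_{k(l)}\|_{L^2(X;\mu_{k(l)})}$ gives \eqref{lower semicontinuity part of the lemma}; composing the two subsequence extractions produces the subsequence $k(l)$ claimed in the statement.

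I expect the main obstacle to be the extraction of a weakly convergent subsequence of the signed measures $g_k\,\mu_k$ in the genuinely non-compact Polish setting: this rests on the tightness of $\{\mu_k\}$, itself a consequence of narrow convergence via Prokhorov, combined with the Cauchy--Schwarz control of the small-mass tails of $|g_k|\,\mu_k$. Once this compactness is secured, identifying the limit as $g\,\mu$ with $g\in L^2(X;\mu)$, together with the optimal lower-semicontinuity inequality, is a soft consequence of Young's inequality and Riesz representation, and uses nothing about the $g_k$ beyond their uniform $L^2$ bound.
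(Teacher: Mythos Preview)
The paper does not give its own proof of this lemma; it simply refers the reader to \cite[Lemma~10.1]{AmbrosioOptimalTransport}. Your argument is correct and self-contained: the tightness of $\{\mu_k\}$ via Prokhorov transfers to $\{g_k^{\pm}\mu_k\}$ by Cauchy--Schwarz, and the Young-inequality/Riesz-representation step cleanly identifies the limit as $g\,\mu$ with the sharp $L^2$ bound. This is essentially the standard approach to such joint lower-semicontinuity statements, so there is nothing to compare --- you have supplied a proof where the paper outsourced one.
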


\section{Stratification of the Singular Set and further applications}\label{section: stratification}

The first goal of this section is to prove sharp Hausdorff dimension estimates for the singular strata of locally perimeter minimizing sets in $\RCD(K,N)$ spaces $(X,\sfd,\mathcal{H}^N)$. The statement is completely analogous to the classical one for singular strata of minimizing currents in the Euclidean setting, see \cite{Federerstrata}, and for the singular strata of non-collapsed Ricci limits \cite{CheegerColdingStructure1} and $\RCD$ spaces \cite{GigliNonCollapsed}. Also the proof is based on the classical  Federer's dimension reduction argument, and builds upon the monotonicity formula and associated rigidity for perimeter minimizing sets in $\RCD(0,N)$ metric measure cones, Theorem \ref{Thm:MonotonicityBody}. Though, a difference between the present work and the aforementioned papers is that the monotonicity formula is available only at the level of blow-ups and not in the space $X$; this creates some challenges that are addressed in the proof. \\
The second main goal will be to present an application of the monotonicity formula and the associated rigidity for cones, to the existence of perimeter minimizing cones in any blow-down of an $\RCD(0,N)$ space $(X,\sfd,\mathcal{H}^N)$ with Euclidean volume growth. 

\medskip

Below we introduce the relevant definition of singular strata and of interior or boundary regularity points for a locally perimeter minimizing set $E\subset X$, when $(X,\sfd,\mathcal{H}^N)$ is an $\RCD(K,N)$ metric measure space.

\begin{definition}[Singular Strata]\label{definition | singular stratum of a finite perimeter set}
Let $(X,\sfd,\mathcal{H}^N)$ be an $\mathrm{RCD}(K,N)$ space, $E\subset X$ a locally perimeter minimizing set  in the sense of Definition \ref{def:LocPerMin} and $0 \leq k \leq N-3$ an integer. The $k$-singular stratum of $E$,  $\mathcal{S}^E_k$, is defined as
\begin{equation}
    \label{singular stratum definition}
    \begin{aligned}
    \mathcal{S}_k^E := &\{x \in \partial E: \mbox{ no tangent space to $(X,\sfd,\mathcal{H}^N,x, E)$ is of the form }(Y,\rho, \mathcal{H}^N,y,F), \\
    &\; \mbox{ with }(Y,\rho,y) \mbox{ isometric to }(Z\times\R^{k+1},\sfd_Z \times \sfd_\mathrm{eucl},(z,0)) \mbox{ for some pointed }(Z,\sfd_Z,z)\\
    &\; \mbox{ and }F=G\times\R^{k+1} \mbox{ with } G\subset Z \mbox{ global perimeter minimizer}\}.
    \end{aligned}
\end{equation}
\end{definition}

The above definition would make sense also in the cases when $k\ge N-2$. However, it seems more appropriate not to adopt the terminology \emph{singular strata} in those instances.

\begin{definition}[Interior and Boundary Regularity Points]\label{def:regpoint}
 Let $(X,\sfd,\mathcal{H}^N)$ be an $\mathrm{RCD}(K,N)$ space and let $E\subset X$ be a locally perimeter minimizing set in the sense of Definition \ref{def:LocPerMin}. Given $x\in \partial E$, we say that $x$ is an \emph{interior regularity} point if
 \begin{equation}
     \mathrm{Tan}_x(X,\sfd,\mathcal{H}^N,E)=\{(\R^N,\sfd_{\mathrm{eucl}},\mathcal{H}^N, 0,\R^N_+)\}\, .
 \end{equation}
 The set of interior regularity points of $E$ will be denoted by $\mathcal{R}^E$.\\
Given $x\in \partial E$, we say that $x$ is a \emph{boundary regularity} point if
  \begin{equation}
     \mathrm{Tan}_x(X,\sfd,\mathcal{H}^N,E)=\{(\R^N_+,\sfd_{\mathrm{eucl}},\mathcal{H}^N, 0,\{x_1\ge 0\})\}\, ,
 \end{equation}
 where $x_1$ is one of the coordinates of the $\R^{N-1}$ factor in $\R^{N}_+=\R^{N-1}\times \{x_N\ge 0\}$. The set of boundary regularity points of $E$ will be denoted by $\mathcal{R}^E_{\partial X}$.
\end{definition}

It was proved in \cite{WeakLaplacian} that the interior regular set $\mathcal{R}^E$ is topologically regular, in the sense that it is contained in a H\"older open manifold of dimension $N-1$.   By a blow-up argument, in the next proposition, we show that $ \mathrm{dim}_\mathcal{H}\mathcal{R}^E_{\partial X}\leq N-2$.

\begin{proposition}\label{prop:RECod2}
Let $(X,\sfd,\mathcal{H}^N)$ be an $\mathrm{RCD}(K,N)$ space. Let $E\subset X$ be a locally perimeter minimizing set and let $\mathcal{R}^E_{\partial X}$ be the set of  boundary regularity points of $E$, in the sense of Definition \ref{def:regpoint}. Then
\begin{equation}
   \mathrm{dim}_\mathcal{H} \mathcal{R}^E_{\partial X} \leq N-2\, .
 \end{equation}
\end{proposition}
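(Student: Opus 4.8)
The plan is to run a Federer-type dimension reduction argument, relative to the stratum $\mathcal{S}^E_{N-2}$, and show that $\mathcal{R}^E_{\partial X}$ is contained in it — or rather, to argue directly that at a boundary regularity point the tangent splits off exactly an $\R^{N-2}$-factor (no more) and that the cross section is essentially a half-line, which is a codimension-two phenomenon. Concretely, I would first record that at any $x\in \mathcal{R}^E_{\partial X}$ the (unique) tangent is $(\R^N_+,\sfd_{\rm eucl},\mathcal{H}^N,0,\{x_1\ge 0\})$, which splits an $\R^{N-2}$ Euclidean factor (the $x_2,\dots,x_{N-1}$ directions) but, crucially, does \emph{not} split an $\R^{N-1}$ factor: the cross section $\{x_1\ge 0\}\subset \R\times\{x_N\ge 0\}$ is not a translation-invariant half-space in the full $\R^{N-1}$ of the Euclidean factor of $\R^N_+=\R^{N-1}\times\{x_N\ge 0\}$, because translating in $x_1$ fails. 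Hence $\mathcal{R}^E_{\partial X}\subset \mathcal{S}^E_{N-2}$ in the sense of Definition~\ref{definition | singular stratum of a finite perimeter set} extended to $k=N-2$ (allowing the terminology as the remark permits).

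Next, I would invoke the abstract stratification principle. The structure of tangent spaces of non-collapsed $\RCD(K,N)$ spaces, together with the behaviour of blow-ups of the perimeter-minimizing set $E$ (which, by the monotonicity formula Theorem~\ref{Thm:MonotonicityBody} applied in tangent cones and by compactness of the class under pmGH convergence with the $L^1_{\rm loc}$-convergence of sets from Definition~\ref{definition | Tangent to a set of locally finite perimeter}, have tangents that are again global perimeter minimizers in metric measure cones) puts us exactly in the framework where Federer's argument gives $\dim_{\mathcal H}\mathcal{S}^E_k\le k$. For $k=N-2$ this is the statement $\dim_{\mathcal H}\mathcal{R}^E_{\partial X}\le N-2$. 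The mechanics: if $\dim_{\mathcal H}\mathcal{R}^E_{\partial X}>N-2$, then by the standard measure-theoretic lemma (positive $\mathcal{H}^{N-2+\varepsilon}$-measure forces a point of positive upper density) one can blow up at a density point of $\mathcal{R}^E_{\partial X}\cap$(that set), obtaining a tangent $(Y,F)$ that still carries an $(N-2+\varepsilon)$-dimensional "copy" of the stratum; invariance of the density forces $F$ to split an extra Euclidean factor beyond the $N-2$ already present, contradicting that at a boundary regularity point the tangent is uniquely $\R^N_+$ with the half-line cross section.

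Two technical inputs need care. First, unlike the classical setting, the monotonicity formula is available on $X$ only at the level of blow-ups; so the density function $\Theta(x):=\lim_{r\to 0}\Per(E;B_r(x))/\omega_{N-1}r^{N-1}$ must be shown to exist at the relevant points and be upper semicontinuous enough for the dimension-reduction bookkeeping. Here one uses Lemma~\ref{lemma | density of perimeter minimizers} for the two-sided density bounds, the monotonicity of $\Phi$ in each tangent cone (Theorem~\ref{Thm:MonotonicityBody}) to pin down that blow-ups are cones, and the rigidity part to upgrade "cone with a splitting direction" to "genuine Euclidean factor in the cross section." Second, one must verify the half-space characterization: a tangent of the form $(\R^k\times Z, H\times \R^k)$ with $H$ a perimeter-minimizing cone in $Z$, when it also equals $(\R^N_+,\{x_1\ge 0\})$, forces $k\le N-2$; this is an elementary Euclidean computation comparing splitting directions of the \emph{space} versus translation-invariance directions of the \emph{set}.

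The main obstacle I expect is the first technical input: cleanly transferring the monotonicity–rigidity machinery, which lives on the cone, to a usable "constancy of density implies extra splitting" statement at blow-up points in the ambient space $X$, since the almost-monotonicity on $X$ itself is exactly what is flagged as open in the paper. The way around it is that for this particular proposition we do not need almost-monotonicity on $X$ at all: at a boundary regularity point the tangent is \emph{unique} by hypothesis (it is in $\mathcal{R}^E_{\partial X}$), so the blow-up is already controlled, and one only needs the rigidity statement inside the tangent cones that arise in the iterated blow-ups, where Theorem~\ref{Thm:MonotonicityBody} does apply. This is why the dimension reduction goes through and gives the clean bound $\dim_{\mathcal H}\mathcal{R}^E_{\partial X}\le N-2$.
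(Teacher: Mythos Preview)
Your plan contains the right kernel---blow up at an $\mathcal{H}^{k'}_\infty$-density point of $\mathcal{R}^E_{\partial X}$ (for some $k'>N-2$), transfer the density to the tangent, and read off a contradiction---but you bury it under machinery that is entirely unnecessary for this proposition. The monotonicity formula, the rigidity statement, iterated blow-ups, and the ``density constancy forces an extra splitting'' step play no role here. The reason is that, by the very definition of $\mathcal{R}^E_{\partial X}$, the tangent at any such point is already the \emph{explicit} object $(\R^N_+,\sfd_{\rm eucl},\mathcal{H}^N,0,\{x_1\ge 0\})$; nothing needs to be manufactured via Theorem~\ref{Thm:MonotonicityBody}.

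The paper's proof is accordingly a single blow-up. One introduces the quantitative set
\[
S^{\varepsilon}(E):=\bigl\{x: \mathcal{D}\bigl((B^X_r(x),\sfd,\mathcal{H}^N,x,E),(B^{\R^N}_r,\sfd_{\rm eucl},0,\R^N_+)\bigr)\ge \varepsilon r \ \text{for all } r\in(0,\varepsilon)\bigr\},
\]
notes that $\mathcal{R}^E_{\partial X}\subset \partial E\setminus\mathcal{R}^E=\bigcup_n S^{\varepsilon_n}(E)$, picks $\bar\varepsilon$ with $\mathcal{H}^{k'}(S^{\bar\varepsilon}(E)\cap \mathcal{R}^E_{\partial X})>0$, and blows up at a density point $x$ for $\mathcal{H}^{k'}_\infty$. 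Since $x\in\mathcal{R}^E_{\partial X}$, the blow-up is exactly the quadrant in $\R^N_+$. Hausdorff-limiting the sets $S^{\bar\varepsilon}(E)\cap\mathcal{R}^E_{\partial X}\cap B^i_1(x)$ (Blaschke) and using upper semicontinuity of $\mathcal{H}^{k'}_\infty$, one gets $\mathcal{H}^{k'}_\infty\bigl(S^{\bar\varepsilon}(\{x_1\ge 0\})\bigr)>0$. But in $\R^N_+$ one computes directly that $S^{\bar\varepsilon}(\{x_1\ge 0\})=\{x_1=x_N=0\}$, an $(N-2)$-dimensional affine subspace, contradicting $k'>N-2$.

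So: your sentence ``invariance of the density forces $F$ to split an extra Euclidean factor beyond the $N-2$ already present'' is the wrong mechanism. No splitting needs to be \emph{forced}; the tangent is given to you, and the contradiction comes from the elementary identification of the quantitative singular set of the explicit model $(\R^N_+,\{x_1\ge 0\})$. The two ``technical inputs'' you worry about (transferring monotonicity from cones back to $X$, and the half-space characterization of splitting directions) simply do not arise. Strip those out and your argument collapses to the paper's.
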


\begin{proof}
We argue by contradiction. Assume there exists $k>N-2$, $k\in\R$ such that 
\begin{equation}\label{Contradiction assumption in BoundReg}
    \mathcal{H}^{k} \left(  \mathcal{R}^E_{\partial X} \right)>0\, .
\end{equation}
Let $\varepsilon>0$. We define the quantitative $\varepsilon$-singular set to be
      \begin{equation}  \label{epsilon singular stratum definitionN-1}
    S^{\varepsilon}(E) := \{x \in X: \ \mathcal{D}((B^X_r(x), \sfd, \mathcal{H}^N, x, E), (B^{\R^{N}}_r,  \sfd_\mathrm{eucl}, 0, \R^{N}_+)) \geq \varepsilon r, \mbox{ for all }r\in(0,\varepsilon) \}\, . 
    \end{equation}
    Recall that the distance $\mathcal{D}$ was introduced in \cite[Definition A.3]{1BakryEmeryAmbrosio}.  Notice that $S^{\varepsilon_1}(E) \supset S^{\varepsilon_2}(E)$ for $0<\varepsilon_1 \leq \varepsilon_2$ and that     \begin{equation}\label{eq:deE-RE}
   \partial E \setminus \mathcal{R}^E = \bigcup_{n \in \N} S^{\varepsilon_n}(E),
   \end{equation}
    for any sequence $\varepsilon_n \downarrow 0$. 
 It is also clear that  
  \begin{equation}\label{eq:deE-RE2}
    \mathcal{R}^E_{\partial X} \subset   \partial E \setminus \mathcal{R}^E.
   \end{equation}
   The combination of \eqref{Contradiction assumption in BoundReg},  \eqref{eq:deE-RE} and \eqref{eq:deE-RE2} implies that   there exists $\Bar{\varepsilon}>0$ such that 
\begin{equation}
    \mathcal{H}^{k} \left( S^{\Bar{\varepsilon}} (E) \cap \mathcal{R}^E_{\partial X}\right)>0\, .
\end{equation}
By \cite[Theorem 2.10.17]{Federerbook}, there exists $x \in S^{\Bar{\varepsilon}}(E)\cap \mathcal{R}^E_{\partial X}$ such that
\begin{equation} \label{initial estimate in H infinitySE}
    \limsup_{r \to 0} \frac{\mathcal{H}^{k}_\infty \left(B_r(x) \cap  S^{\Bar{\varepsilon}}(E)\cap \mathcal{R}^E_{\partial X}\right)}{r^{k}} \geq C_{k}>0\, ,
\end{equation}
where we denoted by $\mathcal{H}^{k}_{\infty}$ the $k$-dimensional $\infty$-pre-Hausdorff measure.

By the very definition of $\mathcal{R}^E_{\partial X}$,  for every sequence $r_i \downarrow 0$, $E \subset (X, \sfd/r_i, \mathcal{H}^N/r_i^N, x)$ converges in the sense of Definition \ref{definition | L^1 convergence of sets} to a quadrant $\{x_1\ge 0\}$, where $x_1$ is one of the coordinates of the $\R^{N-1}$ factor in $\R^{N}_+=\R^{N-1}\times \{x_N\ge 0\}$. 

Embedding the sequence of rescaled spaces $X_i$ and their limit $\R^{N}_{+}$ into a proper realization of the pGH-convergence, by Blaschke’s theorem (cf. \cite[Theorem 7.3.8]{burago2001course}) there exist a compact set $A\subset \R^{N}_{+}$ and a subsequence, which we do not relabel, such that $S^{\Bar{\varepsilon}}(E)\cap \mathcal{R}^E_{\partial X}\cap B^i_1(x)$ converges to $A$ in the Hausdorff sense.\\ 
Moreover, it is elementary to check that $A \subset S^{\Bar{\varepsilon}}(\{x_1\ge 0\})$ in $\R^{N}_{+}$. Therefore, we obtain
\begin{equation}
    \label{Hausdorff pre-measure passes to the limitRE}
    \begin{aligned}
    \mathcal{H}^{k}_\infty \left(S^{\Bar{\varepsilon}}(\{x_1\ge 0\})\right) &\geq \mathcal{H}^{k}_\infty \left(A\right)
    \geq \limsup_{i \to \infty}\mathcal{H}^{k}_\infty \left(S^{\Bar{\varepsilon}}(E)\cap \mathcal{R}^E_{\partial X}\cap B^{i}_1(x)\right)\\
    &= \limsup_{i \to \infty} \frac{\mathcal{H}^{k}_\infty \left(B_{r_i}(x) \cap  S^{\Bar{\varepsilon}}(E)\cap \mathcal{R}^E_{\partial X}\right)}{r_i^{k}} >0\, ,
    \end{aligned}
\end{equation}
where we relied on the classical upper semicontinuity of the pre-Hausdorff measure with respect to Hausdorff convergence in the second inequality and on \eqref{initial estimate in H infinitySE} in the last one. However, it is easy to check that $S^{\Bar{\varepsilon}}(\{x_1\ge 0\})= \{x_{1}=x_{N}=0\}$ which has Hausdorff co-dimension 2, contradicting \eqref{Hausdorff pre-measure passes to the limitRE}.
\end{proof}

Our main results about the stratification of the singular set for perimeter minimizers are that the complement of $\mathcal{S}_{N-3}^E$ in $\partial E$ consists of either interior or boundary regularity points, and that the classical Hausdorff dimension estimate $\dim(\mathcal{S}^E_k)\le k$ holds for any $0\le k\le N-3$. Below are the precise statements.

\begin{theorem}\label{thmn: compl n-3}
Let $(X,\sfd,\mathcal{H}^N)$ be an $\mathrm{RCD}(K,N)$ space and let $E\subset X$ be a locally perimeter minimizing set  in the sense of Definition \ref{def:LocPerMin}. Then 
\begin{equation}
    \partial E\setminus \mathcal{S}_{N-3}^E=\mathcal{R}^E\cup \mathcal{R}^E_{\partial X}\, .
\end{equation}
\end{theorem}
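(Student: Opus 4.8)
The plan is to prove the two inclusions separately; the inclusion $\mathcal{R}^E\cup\mathcal{R}^E_{\partial X}\subseteq\partial E\setminus\mathcal{S}^E_{N-3}$ is elementary, while the converse carries all the content. For the easy inclusion, note that by definition $\mathcal{R}^E,\mathcal{R}^E_{\partial X}\subseteq\partial E$, so it suffices to exhibit, for $x$ in either set, a tangent of the form excluded in the definition of $\mathcal{S}^E_{N-3}$ (with $k=N-3$, so $k+1=N-2$). If $x\in\mathcal{R}^E$, write $\R^N=\R^2\times\R^{N-2}$ and, after the obvious permutation of coordinates, $\R^N_+=H\times\R^{N-2}$ with $H\subset\R^2$ a half-plane, which is a global perimeter minimizer in $\R^2$; thus the unique tangent $(\R^N,\sfd_{\mathrm{eucl}},\mathcal{H}^N,0,\R^N_+)$ has the excluded split form and $x\notin\mathcal{S}^E_{N-3}$. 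If $x\in\mathcal{R}^E_{\partial X}$, write $\R^N_+=\R^2_+\times\R^{N-2}$ (with the planar half-plane $\R^2_+$ carrying the distinguished boundary coordinate) and $\{x_1\ge 0\}=Q\times\R^{N-2}$ with $Q\subset\R^2_+$ the quarter-plane, a global perimeter minimizer in $\R^2_+$; again $x\notin\mathcal{S}^E_{N-3}$.

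\emph{Main inclusion: reduction to a planar classification.} Let $x\in\partial E\setminus\mathcal{S}^E_{N-3}$. Negating the definition, there is a tangent $(Y,\rho,\mathcal{H}^N,y,F)\in\mathrm{Tan}_x(X,\sfd,\mathcal{H}^N,E)$ with $(Y,\rho,y)$ isometric to $(Z\times\R^{N-2},\sfd_Z\times\sfd_{\mathrm{eucl}},(z,0))$, $F=G\times\R^{N-2}$, and $G\subset Z$ a global perimeter minimizer. By the structure theory of non-collapsed $\RCD$ spaces \cite{GigliNonCollapsed}, $Y$ is a metric measure cone with vertex $y$ of essential dimension $N$; hence $Z$ is a metric measure cone with vertex $z=:O_Z$ of essential dimension $2$, so by Proposition \ref{metric cone definition} $Z=C(W')$ with $W'$ a connected $1$-dimensional $\RCD(0,1)$ space of diameter $\le\pi$, i.e. $W'=[0,L]$ with $L\le\pi$ or $W'=S^1_\ell$ with $\ell\le 2\pi$; moreover $x\in\partial E$ forces $O_Z\in\partial G$. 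Applying Theorem \ref{Thm:MonotonicityBody} to $G$ in $Z$, the function $r\mapsto\Phi_G(r):=\Per(G;B_r(O_Z))/r$ is non-decreasing, bounded below away from $0$ by Lemma \ref{lemma | density of perimeter minimizers} (since $O_Z\in\partial G$) and bounded above by comparison with metric balls; since $Z$ is a cone, blowing $G$ up at $O_Z$ and blowing it down rescale $Z$ to itself and produce global perimeter minimizers in $Z$ on which $\Phi$ is constant, hence, by the rigidity in Theorem \ref{Thm:MonotonicityBody}, cones $C(A_0)$ and $C(A_\infty)$ over subsets of $W'$, still with $O_Z$ on their boundary. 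It therefore suffices to classify the global perimeter minimizing cones $C(A)$, $A\subsetneq W'$ of positive measure, in $Z=C(W')$.

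\emph{Classification and conclusion.} Such $A$ is a finite union of arcs, and over each endpoint in the interior of $W'$ of a component $I$ of $A$ or of $W'\setminus A$ the boundary of $C(A)$ contains a radial segment. The key estimate is a competitor: inside $B_r(O_Z)$, the two radial segments over the endpoints of $I$ may be replaced by the geodesic chord joining their outer endpoints, of length $2r\sin(|I|/2)<2r$ whenever the angular length $|I|<\pi$, strictly decreasing the perimeter; and if an endpoint of $I$ lies on $\partial Z=C(\partial W')$, the remaining radial segment may instead be connected to that free boundary ray, strictly decreasing the perimeter whenever the relevant opening is $<\pi/2$ (optimizing $\sqrt{r^2+s^2-2rs\cos a}$ in the foot point at distance $s$ gives length $r\sin a<r$). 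Since $\ell\le 2\pi$ and $L\le\pi$, minimality forces: either $W'=S^1_{2\pi}$, i.e. $Z=\R^2$, with $A$ a half-circle and $C(A)$ a half-plane; or $W'=[0,\pi]$, i.e. $Z=\R^2_+$, with $A=[0,\pi/2]$ (up to reflection) and $C(A)$ a quarter-plane. Applying this to $C(A_0)$ pins down $Z$, and then $\Phi_G$ equals the constant density of a half-plane, resp. of a quarter-plane, at both ends, so $\Phi_G$ is constant and the rigidity of Theorem \ref{Thm:MonotonicityBody} forces $G$ itself to be a half-plane, resp. a quarter-plane. Hence the chosen tangent at $x$ equals $(\R^N,\sfd_{\mathrm{eucl}},\mathcal{H}^N,0,\R^N_+)$ or $(\R^N_+,\sfd_{\mathrm{eucl}},\mathcal{H}^N,0,\{x_1\ge 0\})$. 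Finally, along the corresponding blow-up sequence the rescalings $(B_r(x),\sfd/r,\mathcal{H}^N/r^N,x,E)$ are, for $r$ small, arbitrarily close to the flat, resp. half-flat, model, so the $\varepsilon$-regularity theorem for perimeter minimizers near a half-space and its boundary analogue near a flat boundary (in the spirit of \cite{WeakLaplacian}) make $\partial E$ a Hölder manifold (with boundary, in the second case) near $x$, whence the tangent is unique of the claimed type, i.e. $x\in\mathcal{R}^E$, resp. $x\in\mathcal{R}^E_{\partial X}$.

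\emph{Expected obstacle.} The main difficulty is this last upgrade from one tangent to all tangents: the interior $\varepsilon$-regularity is by now standard in the $\RCD$ setting, but the boundary case rests on the finer structure of the boundary of non-collapsed $\RCD$ spaces (uniqueness of the $\R^N_+$-tangent at density-$\tfrac12$ points) together with a boundary $\varepsilon$-regularity estimate. A secondary technical point, to be handled carefully in the present low-regularity setting, is verifying that the chord competitors above are admissible finite-perimeter sets agreeing with $C(A)$ outside a ball, and that the cone identity relating the perimeter of $C(A)$ in $B_r(O_Z)$ to the perimeter of $A$ in $W'$ holds; these rest on the coarea formula and the $\mathrm{BV}$/cut-and-paste tools of Section \ref{sec:preliminaries}.
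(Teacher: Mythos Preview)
Your proof is correct and follows essentially the same route as the paper: reduce to a two-dimensional classification (packaged in the paper as Lemma \ref{lemma: 2d}) via chord/geodesic competitors, then upgrade one good tangent to uniqueness via $\varepsilon$-regularity. For the boundary upgrade you flag as an obstacle, the paper invokes \cite{BrueNaberSemola} for uniqueness of the ambient $\R^N_+$ tangent and the classical boundary regularity of \cite{Gruter}, then argues as in \cite{WeakLaplacian}; your device of matching $\Phi_G$ at $0^+$ and at $\infty$ via blow-up and blow-down to force $G$ itself to be a cone is a clean alternative to the paper's bare assertion that this step is ``elementary'' once $Z$ is identified.
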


\begin{theorem}[Stratification of the singular set]\label{thm: strat core}
Let $(X,\sfd,\mathcal{H}^N)$ be an $\mathrm{RCD}(K,N)$ space and $E\subset X$ a locally perimeter minimizing set. Then, for any $0\le k\le N-3$ it holds
\begin{equation}
    \mathrm{dim}_\mathcal{H} \mathcal{S}_k^E \leq k\, .
\end{equation}
\end{theorem}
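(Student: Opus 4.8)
\textbf{Proof proposal for Theorem \ref{thm: strat core}.}

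The plan is to run Federer's dimension reduction argument, adapted to the $\RCD$ setting along the lines of \cite{CheegerColdingStructure1,GigliNonCollapsed}, but being careful that the monotonicity formula of Theorem \ref{Thm:MonotonicityBody} is available only on metric measure cones — hence only on (iterated) tangents of $(X,\sfd,\mathcal H^N)$ at points of $\partial E$ — and not on $X$ itself. First I would introduce, for each $\varepsilon>0$ and each integer $0\le k\le N-3$, a quantitative singular stratum $\mathcal S^{E,\varepsilon}_k$ consisting of those $x\in\partial E$ such that for all $r\in(0,\varepsilon)$ the rescaled configuration $(B^X_r(x),\sfd/r,\mathcal H^N/r^N,x,E)$ is $\varepsilon r$-far, in the distance $\mathcal D$ of \cite[Def.~A.3]{1BakryEmeryAmbrosio}, from every model of the form $(Z\times\R^{k+1},\ldots,G\times\R^{k+1})$ with $G$ a global perimeter minimizer in $Z$. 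One checks, exactly as in \eqref{eq:deE-RE}, that $\mathcal S^E_k=\bigcup_{n}\mathcal S^{E,\varepsilon_n}_k$ for any $\varepsilon_n\downarrow 0$, so it suffices to prove $\mathrm{dim}_{\mathcal H}\mathcal S^{E,\varepsilon}_k\le k$ for each fixed $\varepsilon>0$.

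The core is then a contradiction/compactness argument. Suppose $\mathcal H^{k+\eta}(\mathcal S^{E,\varepsilon}_k)>0$ for some $\eta>0$; by the density estimate \cite[Thm.~2.10.17 or 2.10.19]{Federerbook} pick a point $x$ of positive upper $(k+\eta)$-density for $\mathcal H^{k+\eta}_\infty$ restricted to $\mathcal S^{E,\varepsilon}_k$, and blow up. By the compactness of $\RCD(K,N)$ spaces under pmGH convergence and the $L^1_{\mathrm{loc}}$-compactness of (locally) perimeter minimizers (the minimality passes to the limit; local perimeter bounds come from Lemma \ref{lemma | density of perimeter minimizers}), along a subsequence $r_i\downarrow 0$ one obtains a tangent $(Y,\rho,\mathcal H^N,y,F)$ which, at a non-collapsed point, is a metric measure cone $C(Z)$ with $F$ a global perimeter minimizer. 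Embedding the rescalings and the limit into a common realization and using Blaschke's theorem together with upper semicontinuity of $\mathcal H^{k+\eta}_\infty$ under Hausdorff convergence (as in the proof of Proposition \ref{prop:RECod2}), the quantitative singular set passes to the limit: $\mathcal H^{k+\eta}_\infty(\mathcal S^{F,\varepsilon}_k(Y))>0$, so in particular $\dim_{\mathcal H}\mathcal S^{F,\varepsilon}_k(Y)\ge k+\eta$ inside the tangent cone $Y=C(Z)$.

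Now one exploits conical structure. On the cone $C(Z)$, apply the rigidity part of Theorem \ref{Thm:MonotonicityBody}: at a point $p\in\partial F$ where some tangent has an extra Euclidean factor, the monotonicity function $\Phi$ is constant between two radii precisely when $F$ is conical there, and one deduces that $F$ itself splits off a line whenever $\Phi$ is constant on all of $(0,\infty)$. The standard cone-splitting/effective-dimension bookkeeping of \cite{Federerstrata,CheegerColdingStructure1} then shows that if $\mathcal S^{F}_k$ is genuinely $(k+\eta)$-dimensional inside a cone, one can iterate the blow-up at a well-chosen point off the vertex to strictly increase the number of Euclidean factors split off, contradicting that the top relevant stratum is $\mathcal S^E_{N-3}$ (equivalently, after $k+1$ splittings the cross-section would be forced to be a half-space or all of $\R^{N-k-1}$, i.e.\ the point is regular, contradicting membership in $\mathcal S^{E,\varepsilon}_k$). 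This yields the required contradiction and hence $\dim_{\mathcal H}\mathcal S^E_k\le k$.

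I expect the main obstacle to be precisely the point flagged in the introduction: since no monotonicity (or even almost-monotonicity) formula is available on $X$ itself, one cannot run Federer's iteration directly in the ambient space; it must be carried out entirely on blow-ups, which are cones, and one must ensure that the quantitative stratification, the density point selection, and the upper semicontinuity of $\mathcal H^{k}_\infty$ all interact correctly with the two-step limit (first blow up $X$ at $x$ to get a cone $C(Z)$, then blow up $C(Z)$ again at a point away from the tip). Controlling that the quantitative singular set $\mathcal S^{E,\varepsilon}_k$ — defined via closeness to splitting models in the $\mathcal D$-distance — is closed and passes to Hausdorff limits of the rescaled sets, uniformly in the scale, is the technically delicate ingredient; the rigidity statement of Theorem \ref{Thm:MonotonicityBody} and Proposition \ref{prop:RECod2} then supply the base case and the inductive step of the dimension reduction.
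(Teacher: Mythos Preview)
Your overall strategy matches the paper's --- contradiction via quantitative strata, density point, Blaschke plus upper semicontinuity of $\mathcal H^{k'}_\infty$ to pass the estimate to the tangent cone, then iterate --- and you correctly flag that the monotonicity formula lives only on cones, forcing everything onto iterated tangents. But there is a genuine gap at the core of the iteration. You write that the rigidity in Theorem \ref{Thm:MonotonicityBody} lets one ``deduce that $F$ itself splits off a line whenever $\Phi$ is constant on all of $(0,\infty)$''; this misreads the rigidity. Constancy of $\Phi$ (for balls centred at the tip $O$) yields that $F$ is a \emph{cone with vertex $O$}, not that it splits an $\R$-factor. The paper needs both conclusions, and obtains them in two separate, nontrivial steps that your outline collapses into ``standard cone-splitting bookkeeping''.

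First, one must arrange that a density point of $S^F_{k,\bar\varepsilon}$ is also a \emph{vertex} of the ambient cone, since monotonicity is only available centred at vertices; the paper does this by iterated blow-ups at non-vertex density points (each increases the ambient Euclidean factor by one) until either the ambient is $\R^N$ (contradiction with classical regularity) or a vertex density point appears. Only then does monotonicity plus rigidity, followed by a further blow-up, produce a perimeter-minimizing \emph{cone} $G$ still carrying $\mathcal H^{k'}(S^G_{k,\bar\varepsilon})>0$. Second, to reduce dimension one blows up $G$ at a density point $O'\neq O$ and must show the limit $H$ splits as $H'\times\R$ along the new ambient factor. This is not automatic in the $\RCD$ setting: the paper proves it by showing the rescalings $f_i=(\sfd_C^2(O,\cdot)-\sfd_C^2(O,O'))/r_i$ converge in $H^{1,2}_{\rm loc}$ to a splitting function (Lemma \ref{lemma convergence of distance function}), and then combines Gauss--Green, the cone characterization $\nu_G\cdot\nabla\sfd_C(O,\cdot)=0$ of Lemma \ref{characterization of cones}, and a heat-flow commutation (Lemma \ref{Heat flow and derivative in the splitting direction commute}) to force $\partial_t\chi_H\equiv 0$ in the new direction. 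Without supplying this mechanism the dimension-reduction step does not close.
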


Another application of the monotonicity formula with the associated rigidity is that  if an $\RCD(0,N)$ space $(X,\sfd,\mathcal{H}^N)$  with Euclidean volume growth contains a global perimeter minimizer, then any asymptotic cone  contains a  perimeter minimizing cone. 

\begin{theorem}\label{thm:blowdown}
Let $(X,\sfd,\mathcal{H}^N)$ be an $\RCD(0,N)$ metric measure space with Euclidean volume growth, i.e. satisfying for some (and thus for every) $x\in X$:
\begin{equation}\label{eq:AssEVG}
\liminf_{r\to \infty} \frac{\mathcal{H}^N(B_r(x))}{r^N} >0.
\end{equation}
Let $E\subset X$ be a global perimeter minimizer in the sense of Definition  \ref{def:LocPerMin}. Then for any blow-down $(C(Z),\sfd_{C(Z)},\mathcal{H}^N)$ of  $(X,\sfd,\mathcal{H}^N)$ there exists a cone $C(W)\subset C(Z)$ global perimeter minimizer.
\end{theorem}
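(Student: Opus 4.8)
The plan is to produce a perimeter minimizing cone in a given blow-down by a diagonal/compactness argument combined with the monotonicity formula of Theorem \ref{Thm:MonotonicityBody}. Fix a point $x\in X$ and a sequence $r_i\to\infty$ realizing the blow-down, so that $(X,\sfd/r_i,\mathcal H^N/r_i^N,x)\to (C(Z),\sfd_{C(Z)},\mathcal H^N,O_Z)$ in the pmGH sense (such blow-downs exist and are metric measure cones by the Euclidean volume growth assumption and the references cited after Theorem \ref{monotonicity formula thm}, e.g.\ \cite{GigliNonCollapsed,ConeMetric,KettererCones}). By the density estimate for perimeter minimizers (Lemma \ref{lemma | density of perimeter minimizers}) one shows $\Per(E;\cdot)/r_i^{N-1}$ has locally uniformly bounded mass, so up to a subsequence the rescaled sets $E_i := E\subset (X,\sfd/r_i,\dots)$ converge in $L^1_{\mathrm{loc}}$ (Definition \ref{definition | L^1 convergence of sets}) to a set $E_\infty\subset C(Z)$ of locally finite perimeter, with lower semicontinuity of the perimeter along the convergence. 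Using the stability of (global) perimeter minimizers under pmGH + $L^1_{\mathrm{loc}}$ convergence — this follows from the minimality of $E$ at every scale together with a competitor-recovery argument as in the strong-convergence theory of \cite{1BakryEmeryAmbrosio,AmbrosioHonda1} — one deduces that $E_\infty$ is itself a global perimeter minimizer in $C(Z)$ (here it is crucial that $E$ is a \emph{global}, not merely local, minimizer, so the minimality survives the infinite rescaling).

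The next step is to apply Theorem \ref{Thm:MonotonicityBody} to $E_\infty\subset C(Z)$: the function $\Phi_{E_\infty}(\rho)=\Per(E_\infty;B_\rho(O_Z))/\rho^{N-1}$ is non-decreasing. The point is to show $\Phi_{E_\infty}$ is in fact constant, which by the rigidity part of Theorem \ref{Thm:MonotonicityBody} forces $E_\infty$ to be a cone $C(W)$, and we are done. To see constancy, I would track the monotone quantity through the rescalings: for the original minimizer $E$ one has, for every fixed $0<a<b$,
$$
\frac{\Per(E;B_{r_i b}(x))}{(r_i b)^{N-1}}-\frac{\Per(E;B_{r_i a}(x))}{(r_i a)^{N-1}}=\frac{\Per(E_i;B_b(x_i))}{b^{N-1}}-\frac{\Per(E_i;B_a(x_i))}{a^{N-1}}\,,
$$
and the left-hand side is the increment of a quantity which, although it need not be monotone on $X$ itself, is bounded: indeed Euclidean volume growth gives $\Per(E;B_R(x))\le \mathcal H^N(B_R(x))\cdot C/R \le C' R^{N-1}$ by the density bound, and it is bounded below by $0$, so $\limsup$ and $\liminf$ as $R\to\infty$ of $\Per(E;B_R(x))/R^{N-1}$ differ; however along the specific sequence $r_i$ the quantities $\Per(E_i;B_\rho(x_i))/\rho^{N-1}$ converge (by $L^1_{\mathrm{loc}}$-convergence of perimeters, for a.e.\ $\rho$) to $\Phi_{E_\infty}(\rho)$, and the increment above must therefore tend to $\Phi_{E_\infty}(b)-\Phi_{E_\infty}(a)$. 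Passing to a further diagonal subsequence one arranges that this increment is as small as one likes; more precisely, one fixes any $\delta>0$ and chooses the blow-down sequence so that the oscillation of $\rho\mapsto \Per(E;B_\rho(x))/\rho^{N-1}$ over $[r_i a, r_i b]$ tends to $0$ — this is possible precisely because a bounded non-negative function on $(0,\infty)$ (here one uses that, by monotonicity of $\Phi$ on the blow-down, the only obstruction is oscillation at infinity of the ambient ratio, which can be defeated by choosing $r_i$ along which the ratio converges). Hence $\Phi_{E_\infty}(b)=\Phi_{E_\infty}(a)$ for all $a<b$, so $\Phi_{E_\infty}$ is constant and $E_\infty=C(W)$ for some $W\subset Z$.

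The main obstacle, and the step requiring the most care, is the second one: showing that the limit $E_\infty$ has \emph{constant} $\Phi$, i.e.\ that the blow-down is a cone. The subtlety is that $\Phi_E$ is \emph{not} monotone on $X$ (there is no monotonicity formula on a general $\RCD(0,N)$ space — this is exactly the open question raised in the introduction), so one cannot simply say "the limit of a monotone bounded function has vanishing increment at infinity". Instead one must exploit that $\Per(E;B_R(x))/R^{N-1}$ is bounded above and below (via Euclidean volume growth plus Lemma \ref{lemma | density of perimeter minimizers}), extract a blow-down sequence $r_i$ along which this ratio converges to its $\limsup$ (or any chosen subsequential limit), and then argue that along such a sequence the \emph{increments} over $[r_ia,r_ib]$ vanish. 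One then invokes the monotonicity on $C(Z)$ (Theorem \ref{Thm:MonotonicityBody}) to upgrade "increment is $0$ for the specific $a,b$ coming from the sequence" — in fact, since $\Phi_{E_\infty}$ is already known non-decreasing and we get $\Phi_{E_\infty}(a_i)=\Phi_{E_\infty}(b_i)$ for sequences $a_i\downarrow 0$, $b_i\uparrow\infty$ (or for a single pair after more care), monotonicity forces constancy on all of $(0,\infty)$. Finally, the rigidity clause of Theorem \ref{Thm:MonotonicityBody} yields $E_\infty=C(W)$, and $W$ inherits from $E_\infty$ being a global perimeter minimizer in $C(Z)$ the desired conclusion. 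A secondary technical point worth spelling out is the stability of global minimality under the rescaled convergence, which should be handled exactly as in \cite{WeakLaplacian} via recovery sequences for competitors.
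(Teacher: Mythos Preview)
Your Step 1 (compactness plus stability to obtain a global perimeter minimizer $E_\infty\subset C(Z)$) is correct and matches the paper's argument, though two small points deserve care: first, you should center at a point $x\in\partial E$ (not just any $x\in X$) so that the two-sided density estimates apply; second, Lemma \ref{lemma | density of perimeter minimizers} only gives density bounds for small radii, whereas here you need uniform Ahlfors bounds $r^{N-1}/C\le \Per(E;B_r(x))\le Cr^{N-1}$ for \emph{all} $r>0$. The paper obtains these from the global doubling and Poincar\'e structure of $\RCD(0,N)$ spaces with Euclidean volume growth via \cite{KinunnenJGA2013}. This is easily fixed.

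The genuine gap is in Step 2. You try to prove that $\Phi_{E_\infty}$ is constant, i.e.\ that the blow-down $E_\infty$ is itself a cone. This is exactly the open question the paper flags after Theorem \ref{thm:blowdownintro}: it is \emph{not known} whether every blow-down of $E$ must be a cone. Your proposed mechanism cannot work as stated: the sequence $r_i$ is determined by the given blow-down $C(Z)$, so you are not free to ``choose the blow-down sequence so that the oscillation of $\rho\mapsto \Per(E;B_\rho(x))/\rho^{N-1}$ over $[r_ia,r_ib]$ tends to $0$''. And even along a subsequence of the given $r_i$ there is no reason for those increments to vanish, precisely because $\Per(E;B_R(x))/R^{N-1}$ is not monotone on $X$---bounded functions can oscillate forever.

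The paper's route avoids this entirely. Once you have the global minimizer $E_\infty\subset C(Z)$, you \emph{do} have the monotonicity formula (Theorem \ref{Thm:MonotonicityBody}) for $\Phi_{E_\infty}$ centered at the tip $O$. Combined with the Ahlfors bounds (which pass to the limit), $\Phi_{E_\infty}$ is monotone and bounded, so $\lim_{\rho\to\infty}\Phi_{E_\infty}(\rho)$ exists. Now take a \emph{further} blow-down of $E_\infty$ inside $C(Z)$: since $C(Z)$ is self-similar at $O$, this blow-down still lives in $C(Z)$, is still a global perimeter minimizer, and now its $\Phi$ is constant (equal to the limit just computed). The rigidity part of Theorem \ref{Thm:MonotonicityBody} then gives the cone $C(W)$. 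In short: the statement only asks for the \emph{existence} of a perimeter-minimizing cone in $C(Z)$, not that $E_\infty$ itself be conical, and the extra blow-down inside the cone is what buys you monotonicity where you need it.
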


\begin{remark}
The conclusion of Theorem \ref{thm:blowdown} above seems to be new also in the more classical case of smooth Riemannian manifolds with non-negative sectional curvature, or non-negative Ricci curvature. We refer to \cite{Andersonmini} for earlier progress in the case of smooth manifolds with non-negative sectional curvature satisfying additional conditions on the rate of convergence to the tangent cone at infinity and on the regularity of the cross section and to \cite{DingJostXin} for the case of smooth Riemannian manifolds with non-negative Ricci curvature and quadratic curvature decay.
\end{remark}

\begin{proof}[Proof of Theorem \ref{thmn: compl n-3}]
    Let us consider a point $x\in  \partial E\setminus \mathcal{S}_{N-3}^E$. By the very definition of the singular stratum $\mathcal{S}_{N-3}^E$, there exists a tangent space to $(X,\sfd,\mathcal{H}^N,E)$ at $x$ of the form $(\R^{N-2} \times Z,\sfd_{\mathrm{eucl}}\times \sfd_Z,\mathcal{H}^N,y,G\times \R^{N-2})$, where $(Z,\sfd_Z,\mathcal{H}^2)$ is an $\RCD(0,2)$ metric measure cone (because all tangent cones to an $\RCD(K,N)$ space $(X,\sfd,\mathcal{H}^N)$ are metric measure cones \cite{GigliNonCollapsed}) and $G\subset Z$ is a globally perimeter minimizing set (in the sense of Definition  \ref{def:LocPerMin}) thanks to \cite[Theorem 2.42]{WeakLaplacian}.

    By Lemma \ref{lemma: 2d} there are only two options. Either $x$ is an interior point and a tangent space is $(\R^N,\sfd_{\mathrm{eucl}},\mathcal{H}^N,0,\R^N_+)$, or $x$ is a boundary point and a tangent space is $(\R^N_+,\sfd_{\mathrm{eucl}},\mathcal{H}^N,0,\{x_1\ge 0\})$.\\
    In the first case, it was shown in \cite{WeakLaplacian} that the tangent space at $x$ is unique and hence $x\in \mathcal{R}^E$. If the second possibility occurs, then by \cite{BrueNaberSemola} we infer that the tangent cone to the ambient space $(X,\sfd,\mathcal{H}^N)$ is unique. The uniqueness of the tangent cone to the set of finite perimeter can be obtained with an argument completely analogous to the one used for interior points in \cite{WeakLaplacian}, building on top of the classical boundary regularity theory (cf. for instance with \cite{Gruter}) instead of the classical interior regularity theory for perimeter minimizers in the Euclidean setting. Hence $x\in \mathcal{R}^E_{\partial X}$ is a boundary regularity point.
\end{proof}

\begin{proof}[Proof of Theorem \ref{thm: strat core}]
We argue by contradiction via Federer's dimension reduction argument. The proof is divided into four steps. In the first step we set up the contradiction argument and reduce to the case of entire perimeter minimizers inside $\RCD(0,N)$ metric measure cones. In the second step we make a further reduction to the case when the perimeter minimizer is a cone itself, building on top of Theorem \ref{Thm:MonotonicityBody}. Via additional blow-up arguments we gain a splitting for the ambient space and for the perimeter minimizing set in step three, thus performing a dimension reduction. The argument is completed in step four. A key subtlety with respect to more classical situations is that the monotonicity formula holds only for perimeter minimizers centered at vertices of metric measure cones, resulting into the necessity of iterating the blow-ups. 
\medskip

\textbf{Step 1.} \\
We argue by contradiction. Suppose that the statement does not hold for some $0\le k\le N-3$. Then, there exists $k'>k$, $k'\in\R$ such that 
\begin{equation}\label{Contradiction assumption in the stratification}
    \mathcal{H}^{k'} \left( \mathcal{S}^E_k \right)>0\, .
\end{equation}
Let $\varepsilon>0$. We define the quantitative $(k,\varepsilon)$-singular stratum to be
\begin{equation}
    \label{epsilon singular stratum definition}
    \begin{aligned}
    S_{k,\varepsilon}^{E} := \{x \in X: \ &\mathcal{D}((B^X_r(x), \sfd, \mathcal{H}^N, x, E),(B^{\R^{k+1} \times Z}_r, \sfd_\mathrm{eucl} \times \sfd_{Z}, \mathcal{H}^N,   (0, z), F)) \geq \varepsilon r \\
    & \mbox{ for all }r\in(0,\varepsilon),\ (Z,\sfd_Z,z) \mbox{ pointed spaces and }\\
    & F= \R^{k+1} \times G \mbox{ with } G\subset Z \mbox{ global perimeter minimizer}\}\, . 
    \end{aligned}
\end{equation}
Recall that the distance $\mathcal{D}$ was introduced in \cite[Definition A.3]{1BakryEmeryAmbrosio}. 
Moreover, we notice that $S_{k,\varepsilon_1}^{E} \supset S_{k,\varepsilon_2}^{E}$ for $0<\varepsilon_1 < \varepsilon_2$ and that $S_k^{E} = \bigcup_{n \in \N} S_{k,\varepsilon_n}^{E}$, for any sequence $\varepsilon_n \downarrow 0$. 
\smallskip

The contradiction argument assumption \eqref{Contradiction assumption in the stratification} implies that there exists $\Bar{\varepsilon}>0$ such that 
\begin{equation}
    \mathcal{H}^{k'} \left( S^{E}_{k,\Bar{\varepsilon}} \right)>0\, .
\end{equation}
By \cite[Theorem 2.10.17]{Federerbook}, there exists $x \in S^{E}_{k, \Bar{\varepsilon}}$ such that
\begin{equation}
    \label{initial estimate in H infinity}
    \limsup_{r \to 0} \frac{\mathcal{H}^{k'}_\infty \left(B_r(x) \cap  S^{E}_{k,\Bar{\varepsilon}}\right)}{r^{k'}} \geq C_{k'}>0\, ,
\end{equation}
where we denoted by $\mathcal{H}^{k'}_{\infty}$ the $k'$-dimensional $\infty$-pre-Hausdorff measure.
 
Then there exists a sequence $r_i \downarrow 0$ such that $E \subset (X, \sfd/r_i, \mathcal{H}^N/r_i^N, x)$ converges in the sense of Definition \ref{definition | L^1 convergence of sets} to a global perimeter minimizer (in the sense of Definition  \ref{def:LocPerMin}) $F \subset (C(Z), \sfd_C, \mathcal{H}^N)$. Here we used \cite[Corollary 3.4]{1BakryEmeryAmbrosio} in combination with Lemma \ref{lemma | density of perimeter minimizers} for the compactness, \cite[Theorem 2.42]{WeakLaplacian} for the perimeter minimality of $F$ and \cite{GigliNonCollapsed} to infer that the ambient tangent space is a cone. Here $(Z,\sfd_Z,\mathcal{H}^{N-1})$ is an $\RCD(N-2,N-1)$ metric measure space.

\medskip
Embedding the sequence of rescaled spaces $X_i$ and their limit $C(Z)$ into a proper realization of the pGH-convergence, by Blaschke’s theorem (cf. \cite[Theorem 7.3.8]{burago2001course}) there exist a compact set $A\subset C(Z)$ and a subsequence, which we do not relabel, such that $S^{E}_{k,\Bar{\varepsilon}}\cap B^i_1(x)$ converges to $A$ in the Hausdorff sense.\\ 
Moreover, it is straightforward to check that $A \subset S^{F}_{k,\Bar{\varepsilon}}$. Therefore, we obtain
\begin{equation}
    \label{Hausdorff pre-measure passes to the limit}
    \begin{aligned}
    \mathcal{H}^{k'}_\infty \left(S^{F}_{k, \Bar{\varepsilon}}\right) &\geq \mathcal{H}^{k'}_\infty \left(A\right)
    \geq \limsup_{i \to \infty}\mathcal{H}^{k'}_\infty \left(S^{E}_{k,\Bar{\varepsilon}}\cap B^i_1(x)\right)\\
    &= \limsup_{i \to \infty} \frac{\mathcal{H}^{k'}_\infty \left(B_{r_i}(x) \cap  S^{E}_{k,\Bar{\varepsilon}}\right)}{r_i^{k'}} >0\, ,
    \end{aligned}
\end{equation}
where we relied on the classical upper semicontinuity of the pre-Hausdorff measure with respect to Hausdorff convergence in the second inequality and on \eqref{initial estimate in H infinity} in the last one. 

Lastly, \eqref{Hausdorff pre-measure passes to the limit} implies that 
\begin{equation}\label{eq:measest}
\mathcal{H}^{k'}\left(B_1^{C(Z)} \cap  S^{F}_{k,\Bar{\varepsilon}}\right) >0 \, .
\end{equation}

\medskip

\textbf{Step 2.}\\
In this step, by performing a second blow up, we apply Theorem \ref{Thm:MonotonicityBody} to show that we can also suppose that the global perimeter minimizer is a cone (with respect to a vertex of the ambient cone).  For the sake of clarity, we recall that the set of vertices of $C(Z)$ is the collection of all points $y\in C(Z)$ such that $C(Z)$ is a metric cone centered at $y$. Moreover, we remark that the set of vertices is isometric to $\R^k$ for some $0\le k\le N$.
\smallskip

We claim that there is a point $O\in C(Z)$ such that $O$ is a vertex of $C(Z)$ and the following density estimate holds:
\begin{equation}
    \limsup_{r \to 0} \frac{\mathcal{H}^{k'}_\infty \left(B_r(O) \cap  S^{F}_{k,\Bar{\varepsilon}}\right)}{r^{k'}} \geq C_{k'}>0\, .
\end{equation}
If the claim does not hold, then by \eqref{eq:measest} there are points of density for $\mathcal{H}^{k'}_{\infty}$ restricted to $S^{F}_{k,\Bar{\varepsilon}}$ and none of them belongs to the set of vertices of $C(Z)$. Hence we can repeat the argument in step 1, blowing up at a density point for $\mathcal{H}^{k'}_{\infty}$ restricted to $S^{F}_{k,\Bar{\varepsilon}}$ which is not a vertex in the ambient cone. In this way, the dimension of the set of vertices of the ambient space, which is isometric to a Euclidean space, increases at least by one.\\
The procedure can be iterated until one of the following two possibilities occurs: the ambient is isometric to $\R^N$, with standard structure, in which case \eqref{eq:measest} contradicts the classical regularity theory, or there is a density point for $\mathcal{H}^{k'}_{\infty}$ restricted to $S^{F}_{k,\Bar{\varepsilon}}$ which is also a vertex of $C(Z)$. 
\smallskip

Let now $O$ denote any such vertex of $C(Z)$. By Theorem \ref{Thm:MonotonicityBody} and the density estimates in Lemma \ref{lemma | density of perimeter minimizers}, the map
\begin{equation}
r\mapsto \frac{\mathrm{Per}(F; B_r(O))}{r^{N-1}}
\end{equation}
is monotone non-decreasing, bounded and bounded away from $0$. Therefore, there exists the limit 
\begin{equation}\label{limit in the monotonicity formula first blow up}
    0< a:= \lim_{r\to 0} \frac{\mathrm{Per}(F;B_r(O))}{r^{N-1}} < \infty\, .
\end{equation}

We perform a second blow up at the tip $O\in C(Z)$ and obtain a global perimeter minimizer $G\subset C(Z)$. By \eqref{limit in the monotonicity formula first blow up} and Theorem \ref{Thm:MonotonicityBody}, $G$ is a cone. Moreover, by repeating the arguments in step 1, taking into account that $O$ was chosen to be a density point for $\mathcal{H}^{k'}_{\infty}$ restricted to $S^{F}_{k,\Bar{\varepsilon}}$, it holds 
\begin{equation}
    \label{Hausdorff measure after second blow up}
    \mathcal{H}^{k'}\left(S^{G}_{k,\Bar{\varepsilon}}\right) >0\, .
\end{equation}
It follows from \eqref{Hausdorff measure after second blow up} that there exists a point in $S^{G}_{k, \Bar{\varepsilon}} \setminus \{O\}$.
\medskip

\textbf{Step 3.}\\
The goal of this step is to gain a splitting for the ambient and the perimeter minimizer set by considering a blow-up of $G$ at a density point for $\mathcal{H}^{k'}_{\infty}$ restricted to $S^{G}_{k,\Bar{\varepsilon}}$ that is not a vertex. Roughly speaking, we will achieve this by showing that the unit normal of the blow-up is everywhere perpendicular to the gradient of a splitting function obtained with the help of Lemma \ref{lemma convergence of distance function} below, cf.\;\cite[Lemma 28.13]{maggi_2012}.
\medskip

Our setup is that $G\subset C(Z)$ is a globally perimeter minimizing cone with vertex $O$, a vertex of the ambient cone. Moreover, $\mathcal{H}^{k'}(S^{G}_{k,\Bar{\varepsilon}})>0$. In particular, by the very same arguments as in Step 2, there exist a point $O'\in C(Z)$, $O'\neq O$ and a sequence $r_i\downarrow 0$ such that
\begin{equation}
    \lim_{i \to \infty} \frac{\mathcal{H}^{k'}_\infty \left(B_{r_i}(O') \cap  S^{G}_{k,\Bar{\varepsilon}}\right)}{r_i^{k'}} \geq C_{k'}>0\, .
\end{equation}

Up to taking a subsequence that we do not relabel, we can assume that the sequence $(C(Z),\sfd_C/r_i,\mathcal{H}^N,O',G)$ converges to $(C(Z'),\sfd_{C'},\mathcal{H}^N,O'',H)$, where $(C(Z'),\sfd_{C'},\mathcal{H}^N)$ is an $\RCD(0,N)$ metric measure cone splitting an additional $\R$ factor with respect to $C(Z)$ and $H\subset C(Z')$ is a global perimeter minimizer. Moreover,
\begin{equation}
   \mathcal{H}^{k'} \left(B_{1}(O'') \cap  S^{H}_{k,\Bar{\varepsilon}}\right )>0\, .
\end{equation}
Consider the sequence of functions $f_i:C(Z)\to \R$ defined as
\begin{equation}
    f_i(z):=\frac{\sfd^2_C(O,z)-\sfd^{2}_C(O,O')}{r_i}\, ,
\end{equation}
that we view as functions on the rescaled metric measure space $(C(Z),\sfd_C/r_i,\mathcal{H}^N,O')$.\\
By Lemma \ref{lemma convergence of distance function} below, the functions $f_i$ converge to some splitting function $g:C(Z')\to \R$ in $H^{1,2}_{\mathrm{loc}}$, see \cite{AmbrosioHonda1} for the relevant background. Moreover $\Delta f_i$ converge to $0$ uniformly.
\smallskip

We claim that, for any function $\varphi \in \mathrm{LIP}(C(Z'))\cap W^{1,2}(C(Z'))$ it holds 
\begin{equation}\label{splitting condition for the perimeter minimizer}
    \int_H \nabla \varphi \cdot \nabla g \ \d \mathcal{H}^N = 0\, .
\end{equation}
To see this, let $\varphi_i \in \mathrm{LIP}(X_i)\cap W^{1,2}(X_i)$ converging $H^{1,2}$-strongly to $\varphi$ along the sequence $(C(Z),\sfd_C/r_i,\mathcal{H}^N,O')$, whose existence was shown in \cite{AmbrosioHonda1}. 
Then, using the Gauss-Green formula (Theorem \ref{thm:NuEGG}) and the characterization of cones in Lemma \ref{characterization of cones} we obtain
\begin{equation}
\label{gauss green on Gi}
\begin{aligned}
    0 &= \int_{\partial^* G} \varphi_i \nu^i_{G} \cdot \nabla_i f_i \ \dPer_i(G) \\
    & = - \int_{G} \nabla_i \varphi_i \cdot \nabla_i f_i \ \d \mathcal{H}^N - \int_{G}  \varphi_i \cdot \Delta_i f_i \ \d \mathcal{H}^N\, ,
\end{aligned}
\end{equation}
where the Hausdorff measure $\mathcal{H}^N$ is computed with respect to the rescaled distance $\sfd_C/r_i$.\\
By \eqref{eq:uselapest} below and \eqref{gauss green on Gi}
\begin{equation}
\label{convergence to zero of the approximants to the characterization of Per min split}
\begin{aligned}
     \int_{G} \nabla_i \varphi_i \cdot \nabla_i f_i \ \d \mathcal{H}^N = - \int_{G}  \varphi_i \cdot \Delta_i f_i \ \d \mathcal{H}^N \to 0\, .
\end{aligned}
\end{equation}
On the other hand, by \cite[Theorem 5.7]{AmbrosioHonda1}, it follows that
\begin{equation}
    \label{convergence of the approximants to the Per Min split}
    \int_{G} \nabla_i \varphi_i \cdot \nabla_i f_i \ \d \mathcal{H}^N \to \int_H \nabla \varphi \cdot \nabla  g \ \d \mathcal{H}^N\, .
\end{equation}
Combining \eqref{convergence to zero of the approximants to the characterization of Per min split} and \eqref{convergence of the approximants to the Per Min split} we obtain \eqref{splitting condition for the perimeter minimizer}; cf.\;\cite{SemolaGaussGreen,SemolaCutAndPaste} for analogous arguments.
\medskip

Our next goal is to use \eqref{splitting condition for the perimeter minimizer} to show that the perimeter minimizer $H$ splits a line in the direction of the ambient splitting induced by the splitting function $g$. 

Let us set $Y:=C(Z')=\R\times Y'$, and assume that $\R$ is the splitting induced by $g$.

Given any $\varphi\in W_\mathrm{loc}^{1,2}(Y)$ let us also denote $\varphi^{(t)}(y) := \varphi(t,y)$ and $\varphi^{(y)}(t) := \varphi(t,y)$. If $\varphi\in W_{\rm loc}^{1,2}(Y)$, then $\varphi^{(t)} \in W^{1,2}_\mathrm{loc}(Y')$ and $\varphi^{(y)} \in W_\mathrm{loc}^{1,2}(\R)$, for $\mathcal{L}^1$-a.e.\;$t$ and $\mathcal{H}^{N-1}$-a.e.\;$y$ respectively (see \cite{Gigli2018SobolevSO}). Up to the isomorphism given by the splitting induced by $g$, there holds
\begin{equation*}
    \nabla \varphi \cdot \nabla g (t,y)= \partial_t \varphi^{(y)}(t)\, ,\quad \text{for $\mathcal{H}^N$-a.e. $(t,y)\in Y$}\, .
\end{equation*}
Let $P_s$ denote the heat flow on $Y$. Then
\begin{equation}
    \label{heat flow of characteristic of H against derivative of test}
    \begin{aligned}
    \int_Y \mathrm{P}_s \chi_H (t,y) \nabla\varphi \cdot \nabla g (t,y) \, \d\mathcal{H}^N = \int_Y \mathrm{P}_s \chi_H (t,y) \partial_t \varphi^{(y)} (t) \, \d\mathcal{H}^N \\
    = \int_H \mathrm{P}_s \partial_t \varphi^{(y)} (t) \, \d\mathcal{H}^N = \int_H \partial_t (\mathrm{P}_s \varphi)^{(y)} (t)\, ,
    \end{aligned}
\end{equation}
where in the second equality we have used the self-adjointess of the heat flow and in the last equality we have used Lemma \ref{Heat flow and derivative in the splitting direction commute}. 

By \eqref{splitting condition for the perimeter minimizer} and \eqref{heat flow of characteristic of H against derivative of test} it follows that 
\begin{equation}
    \int_Y \mathrm{P}_s \chi_H (t,y) \nabla \varphi \cdot \nabla g (t,y) \, \d\mathcal{H}^N =\int_H \nabla (\mathrm{P}_s \varphi) \cdot \nabla g \, \d\mathcal{H}^N = 0\, .
\end{equation}
Since $\varphi \in \mathrm{LIP}(Y) \cap W^{1,2}(Y)$ is arbitrary, an elementary computation using Fubini's theorem and the splitting $Y=\R\times Y'$ shows that 
\begin{equation*}
    \partial_t (\mathrm{P}_s \chi_H)^{(y)}(t) = 0 \, \quad \text{for $\mathcal{L}^1$-a.e. $t\in\R$, for $\mathcal{H}^{N-1}$-a.e. $y\in Y'$}\, .
\end{equation*}
By the $L^1_\mathrm{loc}(Y)$ convergence of $\mathrm{P}_s \chi_H$ to $\chi_H$ for $s\downarrow 0$ and the closure of $H$, we conclude that $\chi_H^{(y)}$ is constant in $t$ for every $y\in Y'$. 
That implies the existence of a set $H' \subset Y'$ such that 
\begin{equation}
    \label{factorization of the characteristic of H}
    \chi_H (t,y) = \chi_{H'} (y) \, . 
\end{equation}
By Lemma \ref{lemma | Perimeter of cylinders}, $H' \subset Y'$ is a set of locally finite perimeter.\\ 
Let us show that $H'$ is a global perimeter minimizer, by following the classical Euclidean argument, cf.\;\cite[Lemma 28.13]{maggi_2012}. Suppose not. Then there exist $\varepsilon > 0$ and a set $H'_0 \subset Y'$ such that $H' \Delta H'_0 \subset \!\subset B_r(y)$ for some $r>0$ and $y\in Y'$, such that 
\begin{align}
    \mathrm{Per}(H'_0;B_r(y)) + \varepsilon \leq \mathrm{Per}(H';B_r(y))\, .
\end{align}
Let $t>0$. We define the sets
\begin{align}
   & I_t : = \R \setminus (-t,t) & H_0 : = \left(H'_0 \times (-t,t) \right) \cup (H' \times I_t)\, .
\end{align}

At this stage, we can use the formulas for the cut and paste of sets of finite perimeter (Theorem \ref{Thm:CutAndPastePer}), observe
that $H\Delta H_0 \subset B_r(y) \times (-t,t) := A$ and conclude by Lemma \ref{lemma | Perimeter of cylinders} that
\begin{align*}
    \mathrm{Per}(H_0;A) - \mathrm{Per}(H;A) &= 2t (\mathrm{Per} (H_0';B_r(y)) - \mathrm{Per}(H; B_r(y))) + 2\mathcal{H}^{N-1}(H_0' \Delta H')\\
    & \leq -2t\varepsilon + 2 \mathcal{H}^{N-1}(B_r(y))<0\, ,
\end{align*}
where we have chosen $t>0$ large enough so that $ \mathcal{H}^{N-1}(B_r(y)) < t\varepsilon $.\\ 
Therefore, $H'$ is a global perimeter minimizer, as we claimed.
\smallskip

If $k=0$, the above argument leads to a contradiction. Indeed we found a point in $\mathcal{S}^H_0$ such that some tangent space splits a line.

\medskip

\textbf{Step 4.}\\
If $k>0$, then it is straightforward to see that $(t,y)\in S_{k, \Bar{\varepsilon}}^{H}$ if and only if $y\in S_{k-1, \Bar{\varepsilon}}^{H'}$.\\
In particular, from the assumption that 
\begin{equation}
   \mathcal{H}^{k'} \left(B_{1}(O'') \cap  S^{H}_{k,\Bar{\varepsilon}}\right )>0\, ,
\end{equation}
we conclude that 
\begin{equation}
   \mathcal{H}^{k'-1} \left(B_{1}(O'') \cap  S^{H'}_{k-1,\Bar{\varepsilon}}\right )>0\, .
\end{equation}
Therefore the steps from 1 to 3 prove that if there exist an $\RCD(K,N)$ metric measure space $(X,\sfd,\mathcal{H}^N)$ and locally perimeter minimizing set $E\subset X$ such that for some $0\le k\le N-3$ it holds $\dim_{\mathcal{H}}(\mathcal{S}^E_{k})>k$, then there exist an $\RCD(0,N-1)$ space $(X',\sfd',\mathcal{H}^{N-1})$ and a locally perimeter minimizing set $E'\subset X'$ such that $\dim_{\mathcal{H}}(\mathcal{S}^{E'}_{k-1})>k-1$. The dimension reduction can be iterated a finite number of times until we reduce to the case $k=0$, that we already discussed above.
\end{proof}

\begin{proof}[Proof of Theorem \ref{thm:blowdown}]
First of all, up to modifying $E$ on a set of measure zero if necessary, we can (and will) assume that $E$ is open. 

\textbf{Step 1.}  Fix a point $x\in \partial E$.  We claim that there exists $C>1$ such that
\begin{align}
\frac{r^N}{C} \leq \mathcal{H}^N(E\cap B_r(x)) \leq C \; r^N,& \quad \text{for all } r>0, \label{eq:volEBr} \\
\frac{r^{N-1}}{C} \leq \Per(E;  B_r(x)) \leq C \; r^{N-1},& \quad \text{for all } r>0 \label{eq:PerEBr}.
\end{align}
Recall that an $\RCD(0,N)$ space is globally doubling (thanks to the Bishop-Gromov inequality \cite{Sturm2}) and satisfies a global Poincar\'e inequality \cite{RajalaPoincare}. Since, by assumption, $E$ mimimizes the perimeter on every metric ball then, by \cite[Theorem 4.2]{KinunnenJGA2013}, there exists a constant $\gamma_0>0$ (depending only on the doubling and Poincar\'e constants of $(X,\sfd,\mathcal{H}^N)$) such that 
\begin{equation}\label{eq:KinunnenVol}
\frac{\mathcal{H}^N(E\cap B_r(x))}{\mathcal{H}^N(B_r(x))}\geq \gamma_0 \quad \text{ and } \quad \frac{\mathcal{H}^N(B_r(x)\setminus E)}{\mathcal{H}^N(B_r(x))}\geq \gamma_0 \quad \text{for all  $r>0$ and $x\in \partial E$}.
\end{equation}
Recall that the ratio $\mathcal{H}^N(B_r(x))/r^N$ is monotone non-increasing by Bishop-Gromov inequality, it is bounded above by the value in $\R^N$ and it is bounded below by a positive constant thanks to the assumption \eqref{eq:AssEVG}. Hence  \eqref{eq:volEBr} follows from \eqref{eq:KinunnenVol}. The perimeter estimate \eqref{eq:PerEBr} follows from \eqref{eq:volEBr} and \cite[Lemma 5.1]{KinunnenJGA2013}.
\smallskip

\textbf{Step 2.} The argument is similar to those involved in the proof of Theorem \ref{thm: strat core} above and therefore we only sketch it.   Let $r_i\to \infty$ be any sequence such that $(X,\sfd/r_i,\mathcal{H}^N,x)$ converges to a tangent cone at infinity $(C(Z),\sfd_{C(Z)},\mathcal{H}^N,O)$ of $(X,\sfd,\mathcal{H}^N)$. By the Ahlfors regularity estimates \eqref{eq:volEBr}-\eqref{eq:PerEBr} and the compactness and stability \cite[Theorem 2.42]{WeakLaplacian}, the sequence  $(X,\sfd/r_i,\mathcal{H}^N,E,x)$ converges to $(C(Z),\sfd_{C(Z)},\mathcal{H}^N,O,F)$ for some non-empty perimeter minimizer $F\subset C(Z)$. At this stage, we are in position to apply Theorem \ref{Thm:MonotonicityBody} and obtain a perimeter minimizing cone in $C(Z)$, up to possibly taking an additional blow-down. 
\end{proof}

In the remainder of the section, we present some technical results that have been used in the proof of Theorem \ref{thm: strat core}.

\begin{lemma}\label{lemma: 2d}
Let $(Z,\sfd_Z,\mathcal{H}^2)$ be an $\RCD(0,2)$ metric measure cone and let $G\subset Z$ be a globally perimeter minimizing set, in the sense of Definition  \ref{def:LocPerMin}. Then one of the following two possibilities occur:
\begin{itemize}
    \item[i)] $(Z,\sfd_Z,\mathcal{H}^2)$ is isomorphic to $(\R^2,\sfd_{\mathrm{eucl}},\mathcal{H}^2)$ and $G$ is a half-plane;
    \item[ii)] $(Z,\sfd_Z,\mathcal{H}^2)$  is isomorphic to the half-plane $(\R^2_+,\sfd_{\mathrm{eucl}},\mathcal{H}^2)$ and $G$ is a quadrant.
\end{itemize}
\end{lemma}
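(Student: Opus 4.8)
The plan is to reduce the statement, via the monotonicity formula, to a classification of perimeter-minimizing \emph{cones} in $Z$, and then to upgrade back to a general minimizer.

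\emph{Reduction of the ambient space.} By the definition of metric measure cone and Proposition \ref{metric cone definition}, $Z=C(X)$ where $(X,\sfd_X,\m_X)$ is an $\RCD(0,1)$ space with $\mathrm{diam}(X)\le\pi$, and non-collapsedness forces $\m_X$ to be a constant multiple of $\mathcal{H}^1$; since a complete bounded one-dimensional $\RCD$ space is isometric to a circle or to a closed interval, $Z$ is (after rescaling) a flat metric cone of total angle $\ell$, with empty boundary when $X$ is a circle of length $\ell\in(0,2\pi]$ and with boundary equal to two rays from the tip $O$ when $X$ is an interval of length $\ell\in(0,\pi]$. Moreover $Z\cong\R^2$ exactly when $X$ is the circle of length $2\pi$, and $Z\cong\R^2_+$ exactly when $X$ is the interval of length $\pi$; in every other case $Z$ has a conical singularity at $O$ or is a convex sector of opening $<\pi$.

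\emph{Reduction to cones.} Let $G$ be a global perimeter minimizer; by hypothesis $\mathcal{H}^2(G)>0$ and $\mathcal{H}^2(Z\setminus G)>0$, so $\partial G\neq\emptyset$. The dilations $(t,x)\mapsto(\lambda t,x)$ are measure-preserving homotheties of $Z$ fixing $O$, so any blow-down of $G$ at $O$ is again a subset of $Z$. By Theorem \ref{Thm:MonotonicityBody} the function $\Phi_G(r):=\Per(G;B_r(O))/r$ is non-decreasing, and comparing $G$ with $G\setminus B_r(O)$ via the cut-and-paste formula (Theorem \ref{Thm:CutAndPastePer}) bounds it above by $c\,\mathcal{H}^1(X)$; hence $\Phi_\infty:=\lim_{r\to\infty}\Phi_G(r)\in(0,\infty)$, the positivity because $\Phi_G\equiv 0$ would force $\chi_G$ constant. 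By compactness and stability of perimeter minimizers \cite[Theorem 2.42]{WeakLaplacian} some blow-down is a global perimeter minimizer $G_\infty\subsetneq Z$ with $\mathcal{H}^2(G_\infty)>0$ and $\Phi_{G_\infty}\equiv\Phi_\infty$; the rigidity part of Theorem \ref{Thm:MonotonicityBody} then yields $G_\infty=C(A)$ for some $\emptyset\neq A\subsetneq X$. So it suffices to classify proper perimeter-minimizing cones in $Z$.

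\emph{Classifying minimizing cones.} Let $C(A)\subsetneq Z$ be a proper minimizing cone. Local finiteness of $\Per(C(A))$ forces $A$ to have finite perimeter in $X$, hence $A$ is, up to a null set, a finite union of arcs/subintervals; so $\partial_X A$ is finite (and contained in the interior of $X$ in the interval case), and $\partial(C(A))$ is $\{O\}$ together with finitely many rays from $O$, which cut $Z$ into flat sub-sectors, $C(A)$ being the union of alternate ones. If a sub-sector bounded by two such rays has opening $\alpha<\pi$, then inside a ball $B_r(O)$ we may replace its two radial boundary segments by the geodesic chord through their endpoints (of length $2r\sin(\alpha/2)<2r$), strictly decreasing the perimeter — a contradiction. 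In the circle case this forces every one of the (at least two) sub-sectors to have angle $\ge\pi$, whence $\ell=2\pi$ and $A$ is a half-circle, so $Z\cong\R^2$ and $C(A)$ is a half-plane. In the interval case it leaves no interior sub-sector, i.e.\ at most one ray; zero rays is excluded by properness, so there are exactly two sub-sectors abutting $\partial Z$, of angles $a$ and $\ell-a$ with $a+(\ell-a)=\ell\le\pi$, both minimizing (one is $C(A)$, the other its complement). For a sub-sector of angle $\beta\le\pi/2$ abutting $\partial Z$, the only exposed part of its boundary in $B_r(O)$ is one radial segment of length $r$, which can be replaced by the perpendicular dropped from its outer endpoint onto the adjacent boundary ray (length $r\sin\beta\le r$); unless $\beta=\pi/2$ this strictly decreases the perimeter. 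Since at least one of $a,\ell-a$ is $\le\pi/2$, it equals $\pi/2$, and then so does the other, so $\ell=\pi$: thus $Z\cong\R^2_+$ and $C(A)$ is a quadrant with vertex $O$.

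\emph{Upgrading to $G$.} By the above, $Z$ is isometric to $\R^2$ or to $\R^2_+$. In the first case $G$ is a half-plane by the classical classification of global perimeter minimizers in the plane \cite{maggi_2012}. In the second case, blowing up $G$ at any $p\in\partial G$ gives a minimizing cone which — using flatness of $\R^2_+$ at interior points, the previous step at boundary points, and the classical interior \cite{maggi_2012} and boundary \cite{Gruter} regularity theories — is a half-plane if $p$ is interior and a quadrant if $p\in\partial\R^2_+$; hence $\partial G$ is a smooth geodesic curve, orthogonal to $\partial\R^2_+$ wherever it meets it, so each connected component of $\partial G$ is either a line parallel to $\partial\R^2_+$ or a half-line orthogonal to it. A parallel line is not minimizing (deforming it to run along the free boundary $\partial\R^2_+$ and back decreases the perimeter on a large ball), and two or more orthogonal half-lines would make $G$ a strip or its complement, which again is not minimizing; since $\partial G\neq\emptyset$, it must be a single half-line orthogonal to $\partial\R^2_+$, and $G$ is one of the two quadrants it bounds. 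The delicate point throughout is the third step: transporting the elementary Euclidean corner-cutting comparisons to the singular conical setting — identifying $\partial(C(A))$ with a union of rays, and, for the sub-sectors abutting $\partial Z$ where no corner cut is available, using instead a perpendicular drop to the free boundary; the reduction to cones is what makes this possible, since it places us exactly in the scope of Theorem \ref{Thm:MonotonicityBody} and its rigidity.
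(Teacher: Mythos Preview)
Your proof is correct and follows essentially the same strategy as the paper's own argument: classify the one-dimensional cross section $X$ as a circle or an interval, pass to a blow-down (the paper invokes Theorem \ref{thm:blowdown}, you invoke Theorem \ref{Thm:MonotonicityBody} directly --- equivalent here), then use a chord/perpendicular comparison competitor inside $B_r(O)$ to force the cone angle to be $2\pi$ (circle case) or $\pi$ (interval case), and finally upgrade from the cone to the original $G$. The comparison constructions you describe (the chord of length $2r\sin(\alpha/2)$ and the perpendicular of length $r\sin\beta$) coincide with the competitors in the paper; your treatment of the disconnected-$A$ situation via ``every sub-sector has angle $\ge\pi$'' is a minor reorganization of the paper's ``$A$ must be connected'' remark. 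The only substantive difference is in the final upgrading step for $Z=\R^2_+$: the paper dismisses it as elementary, whereas you spell out a blow-up/regularity argument (using the interior and boundary regularity theories and then classifying the possible geodesic components of $\partial G$). This extra detail is correct and arguably clarifies what ``elementary'' means here.
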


\begin{proof}
    We distinguish two cases: if $Z$ has no boundary, then we prove that it is isometric to $\R^2$ and i) must occur; if $Z$ has non-empty boundary, then we prove that it is isometric to $\R^2_+$ and that ii) must occur.
    \medskip

    Let us assume that $(Z,\sfd_Z,\mathcal{H}^2)$ has empty boundary. Then, by \cite{LowDimKitabeppu}, $Z$ is isometric to a cone over $S^1(r)$ for some $0<r\leq1$. Moreover,  by Theorem \ref{thm:blowdown} there exists a blow-down of $G$ which is a global perimeter minimizing cone $C(A)$, with vertex in the origin and $A\subset S^1(r)$ connected. Indeed, it is elementary to check that if $A$ is not connected, then $C(A)$ is not locally perimeter minimizing. 
    
    Let $2\pi r \theta$ be the length of $A$, where $0<\theta<1$. Let $G'\subset Z$ be a set of finite perimeter such that $G' = G$ outside $B_1$ and $\partial G \cap B_1 $ is composed by the geodesic connecting the two points in $\partial G \cap \partial B_1 = \{x_1,x_2\}$. Such geodesic is contained in $B_1$ as can be verified through the explicit form of the metric. Using \eqref{distance on a cone}
\begin{equation}\label{perimeter comparison in cone over S1}
\begin{aligned}
    \mathrm{Per}(G'; B_1) &= \sqrt{2(1-\cos(\sfd_{Z'}(x_1,x_2)\wedge \pi))} \\
    & \leq 2 = \mathrm{Per}(G; B_1).
\end{aligned}
\end{equation}
Equality in \eqref{perimeter comparison in cone over S1} is achieved for $2\pi r \theta = \sfd_{S^1(r)}(x_1,x_2) \geq \pi$, that is for $1\geq r\theta\geq\frac{1}{2}$. Let us notice, by symmetry of $S^1(r)$, that we may suppose that $\theta \leq \frac{1}{2}$. Indeed, for every fixed $\theta$, we may find a comparison set with perimeter equal to the one constructed above corresponding to $1-\theta$. Hence equality is only achieved at $r=1$, $\theta = \frac{1}{2}$, corresponding to the case where $Z=\R^2$ and $C(A)$ is a half space. 
Notice that once we have established that $Z$ is isometric to $\R^2$, it is elementary that $G$ must be a half-space.
\medskip

In the case where $Z$ has non-empty boundary, by \cite{LowDimKitabeppu} again, $Z$ is isometric to a cone over a segment $[0,l]$ for some $0< l\le \pi$. The upper bound for the diameter is required in order for the cone to verify the $\CD(0,2)$ condition. We claim that it must hold $l=\pi$.\\
As above, by Theorem \ref{thm:blowdown}, there exists a blow-down of $G$ which is a global perimeter minimizing cone $C(A)$, with vertex in the origin and $A\subset [0,l]$ some set of finite perimeter. 
If $A$ is not connected, then it is elementary to check that $C(A)$ is not globally perimeter minimizing. Notice also that the complement of a global perimeter minimizer is a global perimeter minimizer.\\
By minimality and symmetry we can suppose $G=C([0,l')]$, for some $0<l'\leq \frac{l}{2}$. 
 By considering a suitably constructed competitor in $B_1$, let us show that the only possibility is that $Z$ is a half-space and  $C(A)$ is a quadrant. Consider the set $G'$ coinciding with $G$ outside of $B_1$ and whose boundary inside $B_1$ is the geodesic minimizing the distance between $\partial B_1 \cap \partial G$ and $\partial Z$. Then $\mathrm{Per}(G';B_1)\leq \mathrm{Per}(G;B_1)$, with equality achieved only if $l=\pi$ and $l'=\frac{\pi}{2}$.
As above, once established that $Z$ is isometric to $\R^2_+$, it is elementary to check that $G$ must be a quadrant.
\end{proof}

It is a standard fact that any blow-up of a cone centered at a point different from the vertex splits a line. For our purposes it is important to observe that the blow-up of the squared distance function from the vertex is indeed a splitting function in the blow-up of the cone. 

\begin{lemma}\label{lemma convergence of distance function}

Let $(X,\sfd,\m)$ be an $\RCD(N-2,N-1)$  space and let $(C(X),\sfd_{C(X)},\m_{C(X)})$ be the metric measure cone over $X$, with vertex $O\in C(X)$. Fix $p\in C(X)$ with $p\neq O$. Let $r_i\downarrow 0$ and consider the sequence of rescaled spaces $Y_i:=(C(X),\sfd_{C(X)}/r_i,\m_{C(X)}/\m(B_{r_i}(p)),p)$ converging in the pmGH topology to a tangent space $Y$ of $C(X)$ at $p$. Then the functions 
\begin{equation}
    f_i(\cdot):=\frac{\sfd_{C(X)}^2(O,\cdot)-\sfd_{C(X)}^2(O,P)}{r_i}\, ,
\end{equation}
viewed as functions $f_i:Y_i\to \R$, have Laplacians uniformly converging to $0$ and converge in $H^{1,2}_{\mathrm{loc}}$ to a splitting function  $g:Y\to \R$, up to the extraction of a subsequence.
\end{lemma}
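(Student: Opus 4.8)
The statement has two parts: (i) the Laplacians $\Delta_i f_i$ converge uniformly to $0$ on compact sets, and (ii) the functions $f_i$ converge in $H^{1,2}_{\mathrm{loc}}$ to a function $g$ which is a splitting function on the tangent space $Y$. I would treat (i) first, since it is essentially a scaling computation using the known Laplacian of the squared distance from the tip on a cone, and then use (i) together with a Bochner/gradient-estimate argument to upgrade $H^{1,2}_{\mathrm{loc}}$-weak compactness (which is automatic) into the splitting property.

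**Step 1: the Laplacian computation.** By \cite[Prop.\;3.7]{ConeMetric}, on the cone $C(X)$ one has the exact identity $\Delta\left(\tfrac12\sfd_{C(X)}^2(O,\cdot)\right)=N$, i.e.\ $\Delta\,\sfd_{C(X)}^2(O,\cdot)=2N$. Since $f_i=\big(\sfd_{C(X)}^2(O,\cdot)-\sfd_{C(X)}^2(O,P)\big)/r_i$ differs from $\sfd_{C(X)}^2(O,\cdot)/r_i$ by an additive constant, and since rescaling the metric by $1/r_i$ while the function carries an extra $1/r_i$ means the Laplacian picks up a factor $r_i^2\cdot\tfrac{1}{r_i}=r_i$, we get $\Delta_i f_i=r_i\cdot\Delta\big(\sfd_{C(X)}^2(O,\cdot)/r_i\cdot r_i\big)$-type bookkeeping; carrying it out carefully gives $\Delta_i f_i = 2N\, r_i \to 0$ uniformly. (If the exact identity is only available away from $O$, note that $P\neq O$, so on every fixed ball $B_R^{Y_i}(p)$ one stays at definite distance from the tip for $i$ large, and the identity applies there.) This also immediately records the auxiliary estimate \eqref{eq:uselapest} / \eqref{eq:uselapest}-type bound used elsewhere, namely $\sup_i\|\Delta_i f_i\|_{L^\infty(B_R^{Y_i}(p))}\to 0$.

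**Step 2: $H^{1,2}_{\mathrm{loc}}$-convergence and gradient bound.** Using \eqref{distance on a cone} one computes $|\nabla f_i|(q) = \tfrac{1}{r_i}\,|\nabla\,\sfd_{C(X)}^2(O,\cdot)|(q) = \tfrac{2}{r_i}\,\sfd_{C(X)}(O,q)$ in the original metric, hence in the rescaled metric on $Y_i$ the gradient at a point $q\in B_R^{Y_i}(p)$ equals $2\,\sfd_{C(X)}(O,q)/r_i \cdot (1/r_i)^{-1}$... again carrying out the scaling: $|\nabla_i f_i|(q) = 2\,\sfd_{C(X)}(O,q)/r_i$, which for $q\in B_R^{Y_i}(p)$ lies in $[2(\sfd_{C(X)}(O,p)/r_i - R),\,2(\sfd_{C(X)}(O,p)/r_i + R)]$. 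The key point, classical for cones, is that $\sfd_{C(X)}(O,p)/r_i\to\infty$ while $f_i$ is renormalized so that $f_i(p)=0$; one checks via \eqref{distance on a cone} (a Taylor expansion of $\sqrt{a^2+b^2-2ab\cos\theta}$) that $f_i$ stays locally uniformly bounded and locally uniformly Lipschitz on $Y_i$, with $|\nabla_i f_i|\to$ a constant (which can be normalized to $1$ after an affine rescaling — or one simply keeps the constant $c=\lim$ and absorbs it). Thus, by the stability of Sobolev functions under pmGH convergence \cite{AmbrosioHonda1, GigliMondinoSavare}, along a subsequence $f_i\to g$ in $H^{1,2}_{\mathrm{loc}}(Y)$ with $|\nabla g|$ constant $\mathcal{H}^N$-a.e.\ and $\Delta g = 0$ (as the weak limit of $\Delta_i f_i\to 0$).

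**Step 3: from harmonic with constant gradient to splitting.** A function on an $\RCD(0,N)$ space which is harmonic, globally Lipschitz, and has constant gradient is exactly a splitting function — this is the content of the splitting theorem machinery \cite{Gigli2018SobolevSO}: one invokes the Bochner inequality to deduce that $g$ has vanishing Hessian, hence $\nabla g$ is a parallel gradient vector field, and then \cite[Theorem 1.3]{Gigli2018SobolevSO} (the $\RCD$ splitting theorem, applied since $Y$ is an $\RCD(0,N)$ space as a blow-up of one) yields the isometric splitting $Y\cong\R\times Y'$ with $g$ corresponding to the coordinate on the $\R$ factor (up to affine normalization). It remains only to note $Y$ is non-trivial in the $\R$-direction because $|\nabla g|\neq 0$, which follows since $f_i(p)=0$ but $f_i$ is non-constant with the gradient lower bound above.

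**Main obstacle.** The delicate point is Step 2: showing that $f_i$ does not degenerate — i.e.\ that the gradients $|\nabla_i f_i|$ converge to a \emph{finite nonzero} constant and that $f_i$ is locally uniformly bounded — requires a careful Taylor expansion of the cone distance \eqref{distance on a cone} around the base point $p$ as $r_i\to 0$, and one must verify that the quadratic-in-distance nature of $\sfd_{C(X)}^2(O,\cdot)$ is exactly what makes the rescaled difference converge (rather than blow up or vanish). This is the analogue of the Euclidean fact that $(|x|^2 - |x_0|^2)/r_i = (2 x_0\cdot(x-x_0)_{\mathrm{resc}} + O(r_i))$ converges to the linear function $2x_0\cdot v$; the metric version needs the first-order Taylor expansion of the law of cosines and the identification of weak gradients with radial derivatives from Proposition \ref{characterization of scalar product}. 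Everything else is bookkeeping with scaling factors and an appeal to the stability results of \cite{AmbrosioHonda1} and the splitting theorem.
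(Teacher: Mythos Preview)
Your overall strategy matches the paper's proof: compute $\Delta_i f_i = 2N r_i \to 0$ via scaling, show $|\nabla_i f_i|$ converges to a nonzero constant, invoke the stability results of \cite{AmbrosioHonda1} for the $H^{1,2}_{\mathrm{loc}}$ convergence, and conclude that the limit is harmonic with constant nonzero gradient, hence a splitting function.

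However, there is a scaling error in your Step 2 that creates an artificial difficulty. You write $|\nabla_i f_i|(q) = 2\,\sfd_{C(X)}(O,q)/r_i$, which then forces you to confront an apparent blow-up since $\sfd_{C(X)}(O,p)/r_i \to \infty$. This is incorrect. With $\sfd_i := \sfd_{C(X)}/r_i$, the gradient rescales as $|\nabla_i h| = r_i |\nabla h|$, so
\[
|\nabla_i f_i|(q) = r_i \cdot \frac{1}{r_i}\,|\nabla f|(q) = |\nabla f|(q) = 2\,\sfd_{C(X)}(O,q)
\]
in the \emph{original} distance. For $q \in B_R^{Y_i}(p)$ one has $\sfd_{C(X)}(q,p) < r_i R$, hence $|\nabla_i f_i|(q) \in [2\sfd_{C(X)}(O,p) - 2r_i R,\, 2\sfd_{C(X)}(O,p) + 2r_i R]$, which converges uniformly on compacts to the finite nonzero constant $2\,\sfd_{C(X)}(O,p)$. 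This is exactly how the paper argues: it notes that $q \mapsto 2\,\sfd_{C(X)}(q,O)$ is $2r_i$-Lipschitz on $Y_i$, so the gradient is locally uniformly bounded and converges to a constant, and the $f_i$ (being normalized by $f_i(p)=0$) are locally uniformly Lipschitz.

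Consequently, the Taylor expansion of the cone law of cosines that you flag as the ``main obstacle'' is unnecessary: once the scaling is done correctly, the local uniform Lipschitz bound and the constancy of the limiting gradient are immediate, with no first-order expansion required. Your Step 3 is fine and agrees with the paper's conclusion.
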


\begin{proof}
Let us set
\begin{equation}
   f(\cdot):=\sfd_{C(X)}^2(O,\cdot)-\sfd_{C(X)}^2(O,P)\, ,  
\end{equation}
in order to ease the notation. On $C(X)$ it holds (see \cite{ConeMetric})
\begin{equation}
    \Delta f =2N\, ,\quad |\nabla f(x)|=2\sfd_{C(X)}(x,O)\, ,\quad \text{for a.e. on $x\in C(X)$}\, .
\end{equation}
By scaling, we obtain that
\begin{equation}\label{eq:uselapest}
    \Delta f_i =2Nr_i\, ,\quad |\nabla f(x)|=2\sfd_{C(X)}(x,O)\, ,\quad \text{for a.e. $x\in Y_i$}\, ,
\end{equation}
where it is understood that the Laplacian and the minimal relaxed gradient are computed with respect to the metric measure structure $(C(X),\sfd_{C(X)}/r_i,\m_{C(X)}/\m(B_{r_i}(p)),p)$. Notice that $x\mapsto 2\sfd_{C(X)}(x,O)$ is a $2r_i$-Lipschitz function on $Y_i$, by scaling.\\
Hence the functions $f_i:Y_i\to\R$ are locally uniformly Lipschitz, they satisfy $f_i(p)=0$, and they have Laplacians uniformly converging to $0$. Up to the extraction of a subsequence, thanks to a diagonal argument, we can assume that they converge locally uniformly and in $H^{1,2}_{\mathrm{loc}}$ to a function $g:Y\to \R$ in the domain of the local Laplacian, and that $\Delta f_i$ converge to $\Delta g$ locally weakly in $L^2$, thanks to \cite{AmbrosioHonda1,AmbrosioHonda2}. We claim that $g$ is a splitting function on $Y$, which amounts to say that $\Delta g=0$ and $|\nabla g|$ is constant almost everywhere and not $0$.
\medskip

The fact that $\Delta g=0$ follows from the weak convergence of the Laplacians and the identity $\Delta f_i=2Nr_i$ that we established above.\\
Analogously, employing the identity $|\nabla f_i(x)|=2\sfd_{C(X)}(x,O)$ a.e. on $Y_i$, and the local $W^{1,2}$ convergence of $f_i$ to $g$, it is immediate to check that $|\nabla g|=2\sfd(\cdot,O)$ a.e. on $Y$. 
\end{proof}

The next result relates the Heat flow on product spaces with one dimensional derivatives.

\begin{lemma}[Heat flow and derivative in the splitting direction commute]\label{Heat flow and derivative in the splitting direction commute}
Let $(X,\sfd,\m)$ be an $\RCD(K,\infty)$ space and let $X\times\R$ be endowed with the standard product metric measure space structure. Let $\varphi\in W^{1,2}(X\times\R)$. Then for every $s>0$ it holds
\begin{equation}
    \mathrm{P}_s \partial_t \varphi(x,t)= \partial_t (\mathrm{P}_s\varphi)(x,t)\, ,
\end{equation}
for $\m_X\otimes \mathcal{L}^1$-a.e. $(x,t)\in X\times \R$.

\end{lemma}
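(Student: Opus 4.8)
The plan is to exploit that the translations $\tau_h\colon X\times\R\to X\times\R$, $\tau_h(x,t):=(x,t+h)$, are measure-preserving isometries of the product metric measure space. Since the heat semigroup is canonically associated to the metric measure structure (an isometry preserves minimal weak upper gradients, hence the Cheeger energy, hence commutes with its $L^2$-gradient flow), this gives
\begin{equation}\label{eq:PsCommTranslation}
\mathrm{P}_s(\psi\circ\tau_h)=(\mathrm{P}_s\psi)\circ\tau_h\, ,\qquad \psi\in L^2(X\times\R),\ s>0,\ h\in\R\, .
\end{equation}
Morally, $\partial_t$ is the infinitesimal generator of the group $(\tau_h)_{h\in\R}$, so the statement should follow by differentiating \eqref{eq:PsCommTranslation} in $h$ at $h=0$; making this rigorous is the whole content.

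First I would collect the relevant facts about $\partial_t$. By the tensorization structure of $W^{1,2}$ on products \cite{Gigli2018SobolevSO}, for $\varphi\in W^{1,2}(X\times\R)$ one has $\varphi(x,\cdot)\in W^{1,2}(\R)$ for $\m_X$-a.e.\;$x$, the function $\partial_t\varphi\in L^2(X\times\R)$ is well defined, and $|\nabla\varphi|^2=|\nabla_X\varphi|^2+|\partial_t\varphi|^2$ a.e. Using the absolutely continuous representative of $\varphi(x,\cdot)$ one writes $h^{-1}(\varphi\circ\tau_h-\varphi)(x,t)=h^{-1}\int_0^h\partial_t\varphi(x,t+\sigma)\,\d\sigma$, whence, by Minkowski's integral inequality and the strong continuity of the translation group on $L^2(X\times\R)$,
\begin{equation}\label{eq:DiffQuotSplit}
\Big\|\frac{\varphi\circ\tau_h-\varphi}{h}-\partial_t\varphi\Big\|_{L^2(X\times\R)}\le\sup_{0<|\sigma|\le|h|}\big\|(\partial_t\varphi)\circ\tau_\sigma-\partial_t\varphi\big\|_{L^2(X\times\R)}\longrightarrow 0\quad\text{as }h\to0\, .
\end{equation}

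Then I would combine the two ingredients. By linearity of $\mathrm{P}_s$ and \eqref{eq:PsCommTranslation},
\begin{equation*}
\mathrm{P}_s\!\left(\frac{\varphi\circ\tau_h-\varphi}{h}\right)=\frac{(\mathrm{P}_s\varphi)\circ\tau_h-\mathrm{P}_s\varphi}{h}\, .
\end{equation*}
Let $h\to0$. The left-hand side converges in $L^2(X\times\R)$ to $\mathrm{P}_s(\partial_t\varphi)$ by \eqref{eq:DiffQuotSplit} and the $L^2$-contractivity of $\mathrm{P}_s$. For the right-hand side, since $\mathrm{P}_s\varphi\in W^{1,2}(X\times\R)$ we may apply \eqref{eq:DiffQuotSplit} with $\varphi$ replaced by $\mathrm{P}_s\varphi$, so it converges in $L^2(X\times\R)$ to $\partial_t(\mathrm{P}_s\varphi)$. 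Hence $\mathrm{P}_s(\partial_t\varphi)=\partial_t(\mathrm{P}_s\varphi)$ in $L^2(X\times\R)$, i.e.\;$\m_X\otimes\mathcal{L}^1$-a.e., as claimed.

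The only genuinely delicate step is \eqref{eq:DiffQuotSplit} — the $L^2$-convergence of the difference quotients in the splitting direction — which rests on the tensorization of $W^{1,2}(X\times\R)$ and the correct identification of $\partial_t\varphi$; everything else is soft functional analysis. An essentially equivalent alternative is to invoke the tensorization of the heat semigroup $\mathrm{P}^{X\times\R}_s=\mathrm{P}^X_s\circ\mathrm{P}^{\R}_s$ and to note that $\partial_t$ commutes with $\mathrm{P}^X_s$ (which acts only in the $X$-variable) and with $\mathrm{P}^{\R}_s$ (convolution with the Gaussian kernel on $\R$ commutes with differentiation).
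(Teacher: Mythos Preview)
Your argument is correct. The paper's own proof is a one-line sketch: it invokes the tensorization of the heat flow on products, $\mathrm{P}^{X\times\R}_s=\mathrm{P}^X_s\otimes\mathrm{P}^{\R}_s$, together with the classical commutation of $\partial_t$ with the heat semigroup on $\R$ --- precisely the alternative you outline in your final paragraph.

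Your primary argument takes a different route: rather than factoring the semigroup, you use that the translations $\tau_h$ in the $\R$-direction are measure-preserving isometries, hence commute with $\mathrm{P}_s$, and then recover $\partial_t$ as the $L^2$-limit of difference quotients via \eqref{eq:DiffQuotSplit}. This is arguably more self-contained: it does not require the full tensorization of the heat flow (only the more elementary invariance under isometries), and the delicate step \eqref{eq:DiffQuotSplit} rests only on the tensorization of $W^{1,2}$ --- which is needed in both approaches anyway to make sense of $\partial_t\varphi$. The paper's route is shorter to state but leans on a heavier black box; your route unpacks the commutation at the level of the one-parameter group whose generator is $\partial_t$, which makes the mechanism more transparent.
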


\begin{proof}
The statement follows from the tensorization of the Cheeger energy and of the heat flow for products of $\RCD(K,\infty)$ metric measure spaces, see for instance \cite{AmbrosioBakryEmery,Ambrosio_2014}, and from the classical commutation between derivative and heat semi-group on $\R$ endowed with the standard metric measure structure. 
\end{proof}

It is a well known fact of the Euclidean theory (see for instance  \cite{maggi_2012}) that the perimeter enjoys natural tensorization properties, when taking an isometric product by an $\R$ factor. The next lemma establishes the $\RCD$ counterpart of this useful property.

\begin{lemma}[Perimeter of Cylinders]\label{lemma | Perimeter of cylinders}
Let $(X,\sfd_X,\m_X)$ be an $\mathrm{RCD}(K,N)$ space and let $F\subset X$ be a Borel set. Under these assumptions, $E := F \times \R \subset X \times \R$ is a set of locally finite perimeter (where the product $X\times \R$ is endowed with the standard product metric measure structure), if and only if $F \subset X$ is a set of locally finite perimeter. Moreover, for any open set $A\subset X$ and for any $R>0$ it holds
\begin{equation}\label{cylinder identity perimeter}
    R\, \mathrm{Per}(F;A) = \mathrm{Per}(E;A\times[0,R])\, .
\end{equation}

\begin{proof}
By the very definition of perimeter it holds
\begin{align*}
    \mathrm{Per}(E, A\times[0,R]) = \inf_{(\varphi_i)_i} \left\{\liminf_{i\to \infty}\int_0^R \int_A \mathrm{lip} \, \varphi_i (t,x)\, \d\m_X\, \d t\right\}\, ,
\end{align*}
where the infimum is taken over all sequences $(\varphi_i)_i \subset \mathrm{LIP}_\mathrm{loc}(A\times[0,R])$ such that $\varphi_i \to \chi_E$ in $L^1_\mathrm{loc}(A\times[0,R])$. We are going to prove \eqref{cylinder identity perimeter} and the first part of the statement will follow immediately.
\medskip

\textbf{Step 1.} Let us start by showing the inequality
\begin{equation}\label{cylinder identity perimeter | first inequality}
    R\,\mathrm{Per}(F;A) \geq \mathrm{Per}(E;A\times[0,R])\, .    
\end{equation}
Let $(\psi_i)_i \subset \mathrm{LIP}_\mathrm{loc}(A)$ be a competitor for the perimeter of $F$ in $A$, i.e. $\psi_i \to \chi_F$ in $L^1_\mathrm{loc} (A,\m_X)$ and all the functions $\psi_i$ are locally Lipschitz. Define $\phi(t,x) := \psi(x)$ for $0\leq t\leq R$ and $x \in A$. Then, by Fubini's Theorem, $\{\phi_i\}_i$ is a competitor for the perimeter of $E$ in $A\times [0,R]$. Therefore, 
\begin{align*}
    \mathrm{Per}(F;A) & = \inf_{(\psi_i)_i} \left\{\liminf_{i\to \infty} \int_A \mathrm{lip} \, \psi_i (x)\, \d\m_X\right\} \\ 
    & = \frac{1}{R} \inf_{(\psi_i)_i} \left\{\liminf_{i\to \infty}\int_0^R \int_A \mathrm{lip} \, \phi^{(t)}_i (x) \, \d\m_X\, \d t\right\}\\
    & \geq \frac{1}{R} \inf_{(\varphi_i)_i} \left\{\liminf_{i\to \infty}\int_0^R \int_A \mathrm{lip} \, \varphi_i (t,x)\, \d\m_X\, \d t\right\} = \frac{1}{R} \mathrm{Per} (E; A\times [0,R])\, ,
\end{align*}
where the inequality follows from the fact that, on the right hand side, we are taking the infimum over a larger class.
\medskip

\textbf{Step 2.} We prove the opposite inequality in \eqref{cylinder identity perimeter}. \\
Let us fix $\varepsilon>0$. There exists a sequence $(\varphi_i)_i \subset \mathrm{LIP}_\mathrm{loc}(A\times[0,R])$ with $\varphi_i \to \chi_E$ in $L^1_\mathrm{loc}(A\times[0,R])$ such that 
\begin{equation}
   \liminf_{i\to \infty}\int_0^R \int_A \mathrm{lip} \, \varphi_i (t,x)\, \d x\, \d t \leq \mathrm{Per} (E; A\times [0,R]) + \varepsilon\, .
\end{equation}

It is straightforward to check that $\mathrm{lip}\,\varphi_i^{(t)} (x) \leq \mathrm{lip} \,\varphi_i (t,x)$ for every $(t,x) \in \R \times X$.\\ 
Moreover, the sequence $(\varphi_i^{(t)})_i$ is a competitor for the variational definition of the perimeter of $F$ in $A$ for $\mathcal{L}^1$-almost every $t$, by the coarea formula. Therefore, by Fatou's lemma,
\begin{align*}
    R \, \mathrm{Per}(F;A) &\leq \int_0^R \liminf_{i\to \infty} \int_A \mathrm{lip} \, \varphi^{(t)}_i (x) \, \d\m_X\, \d t \\
    & \leq \liminf_{i\to \infty}\int_0^R \int_A \mathrm{lip} \, \varphi_i^{(t)} (x)\, \d\m_X\, \d t \\
    & \leq \liminf_{i\to \infty}\int_0^R \int_A \mathrm{lip} \, \varphi_i(t,x)\, \d\m_X\, \d t \leq \mathrm{Per}(E;A\times[0,R]) + \varepsilon\, .
\end{align*}
Since $\varepsilon > 0$ was arbitrary, we conclude.
\end{proof}
\end{lemma}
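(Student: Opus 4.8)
The plan is to establish the perimeter identity \eqref{cylinder identity perimeter} directly; the equivalence ``$E$ has locally finite perimeter if and only if $F$ does'' then follows by a routine covering argument, since any ball $B_\rho((x,t))\subset X\times\R$ sits inside the cylinder $B^X_\rho(x)\times(t-\rho,t+\rho)$, whose $E$-perimeter equals $2\rho\,\mathrm{Per}(F;B^X_\rho(x))$ by \eqref{cylinder identity perimeter} and translation invariance in the $\R$-factor, while conversely $\mathrm{Per}(F;B^X_\rho(x))=R^{-1}\,\mathrm{Per}(E;B^X_\rho(x)\times[0,R])$ is finite whenever $E$ has locally finite perimeter because the cylinder is bounded. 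So the whole content lies in the two inequalities comprising \eqref{cylinder identity perimeter}, which I would prove in the spirit of the Euclidean tensorization argument.

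For $R\,\mathrm{Per}(F;A)\ge\mathrm{Per}(E;A\times[0,R])$, I would take a recovery sequence $(\psi_i)\subset\mathrm{LIP}_\mathrm{loc}(A)$ with $\psi_i\to\chi_F$ in $L^1_\mathrm{loc}(A)$ realizing $\mathrm{Per}(F;A)$ and lift it to $\phi_i(t,x):=\psi_i(x)$ on $A\times[0,R]$. By Fubini $\phi_i\to\chi_E$ in $L^1_\mathrm{loc}(A\times[0,R])$, and because the product distance satisfies $\sfd\big((t,x),(s,y)\big)\ge\sfd_X(x,y)$ one checks from the definition that $\mathrm{lip}\,\phi_i(t,x)\le\mathrm{lip}\,\psi_i(x)$ (indeed equality, by restricting to $s=t$). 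Hence $\int_0^R\!\int_A\mathrm{lip}\,\phi_i\,\d\m_X\,\d t\le R\int_A\mathrm{lip}\,\psi_i\,\d\m_X$, and passing to the $\liminf$ in $i$ gives the inequality, since $(\phi_i)$ is an admissible competitor for $\mathrm{Per}(E;A\times[0,R])$.

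For the reverse inequality, fix $\varepsilon>0$ and a near-optimal competitor $(\varphi_i)\subset\mathrm{LIP}_\mathrm{loc}(A\times[0,R])$ with $\varphi_i\to\chi_E$ in $L^1_\mathrm{loc}$ and $\liminf_i\int_0^R\!\int_A\mathrm{lip}\,\varphi_i\,\d\m_X\,\d t\le\mathrm{Per}(E;A\times[0,R])+\varepsilon$. After passing to the subsequence realizing this $\liminf$, and then, by a diagonal extraction over an exhaustion of $A$ by compacts, to a further subsequence, the slices $\varphi_i^{(t)}(x):=\varphi_i(t,x)$ converge to $\chi_F$ in $L^1_\mathrm{loc}(A)$ for $\mathcal{L}^1$-a.e.\ $t\in[0,R]$. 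Using that $\mathrm{lip}\,\varphi_i^{(t)}(x)\le\mathrm{lip}\,\varphi_i(t,x)$, that $(\varphi_i^{(t)})$ is admissible for $\mathrm{Per}(F;A)$ at a.e.\ $t$, and Fatou's lemma, I would conclude $R\,\mathrm{Per}(F;A)\le\int_0^R\liminf_i\int_A\mathrm{lip}\,\varphi_i^{(t)}\,\d\m_X\,\d t\le\liminf_i\int_0^R\!\int_A\mathrm{lip}\,\varphi_i\,\d\m_X\,\d t\le\mathrm{Per}(E;A\times[0,R])+\varepsilon$, and let $\varepsilon\to0$.

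The argument is conceptually simple, and the only points needing care are technical. First, one must justify the pointwise comparisons $\mathrm{lip}\,\phi(t,x)\le\mathrm{lip}\,\psi(x)$ for the constant-in-$t$ lift and $\mathrm{lip}\,\varphi^{(t)}(x)\le\mathrm{lip}\,\varphi(t,x)$ for slices, both immediate from the form of the product metric $\sfd\big((t,x),(s,y)\big)=\sqrt{\sfd_X(x,y)^2+|t-s|^2}$. Second --- and here I expect most of the bookkeeping --- one must handle measurability of the slice functions and the a.e.-$t$ convergence $\varphi_i^{(t)}\to\chi_F$, which forces extracting subsequences in the right order (first the one achieving $\liminf_i$, then the one giving convergence for a.e.\ $t$). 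Everything else is Fubini and the definition of the localized variational perimeter.
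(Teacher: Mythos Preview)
Your proposal is correct and follows essentially the same two-inequality argument as the paper: lift a recovery sequence for $\mathrm{Per}(F;A)$ to a competitor for $\mathrm{Per}(E;A\times[0,R])$ for one direction, and slice a near-optimal competitor for $E$ and apply Fatou for the other. You are in fact slightly more careful than the paper about the subsequence extraction needed to ensure $\varphi_i^{(t)}\to\chi_F$ in $L^1_\mathrm{loc}(A)$ for a.e.\ $t$; the paper handles this in one phrase (attributing it to the coarea formula), whereas your diagonal-extraction remark is the more honest justification.
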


{
\small{ 
\printbibliography}
}
\end{document}